\documentclass[11pt,xcolor=dvipsnames]{article}
\pdfoutput=1 
\usepackage[utf8x]{inputenc}
\usepackage[english]{babel}
\usepackage{graphicx}           
\usepackage{url}
\usepackage{eurosym}
\usepackage{makeidx}
\usepackage[plain]{algorithm}
\usepackage{algorithmic} 
\usepackage{color} 
\usepackage[margin=3.3cm]{geometry}
\usepackage{setspace}
\usepackage{datetime}
\usepackage{pifont}
\usepackage{tikz}
\usepackage{natbib}
\usepackage{hyperref}
\usepackage{amsmath,amsfonts,amssymb,latexsym,stmaryrd}

%\linespread{2}

%%%%%%%%%%%%%%%%%%%%%%%%%%%%%%%%%%%%%%%%%%%%%%%%%%%%%%%%%%%%%%%%%%%%%%
% macros
%%%%%%%%%%%%%%%%%%%%%%%%%%%%%%%%%%%%%%%%%%%%%%%%%%%%%%%%%%%%%%%%%%%%%%

%% e-macros.tex
%
\newcommand{\eqdef}     {\stackrel{{\textrm{\rm\tiny def}}}{=}}

\newtheorem{theorem}      {Theorem}[section]
\newtheorem{theorem*}     {theorem}
\newtheorem{proposition}  [theorem]{Proposition}

\newtheorem{lemma}        [theorem]{Lemma}
\newtheorem{corollary}    [theorem]{Corollary}

\newtheorem{remark}       [theorem]{Remark}

%
%{\theorembodyfont{\rmfamily} \newtheorem{exercice}[theorem]{Exercice}}
%{\theorembodyfont{\rmfamily} \newtheorem{remark}[theorem]{Remark}}
%{\theorembodyfont{\rmfamily} \newtheorem{remarks}[theorem]{Remarks}}
%{\theorembodyfont{\rmfamily} \newtheorem{problem}[theorem]{Problem}}
%{\theorembodyfont{\rmfamily} \newtheorem{example}[theorem]{Example}}
%{\theorembodyfont{\rmfamily} \newtheorem{notation}[theorem]{Notation}}
%{\theorembodyfont{\rmfamily} \newtheorem{notations}[theorem]{Notations}}
%
\newcommand{\proof}        {\paragraph{Proof}}

%% macros.tex                                                               %%
%% les macros de LaTeX                                                      %%
\typeout{>>> macros latex 17 septembre 2002}

\newsavebox{\fmbox}

\newcommand{\mmax}{{\textrm{max}}}

% --- les ensembles 

\newcommand{\M}        {\mathbb M}
\newcommand{\N}        {\mathbb N}
\newcommand{\D}        {\mathcal D}

\newcommand{\C}        {\mathbb C}

\newcommand{\E}        {\mathbb E}
\newcommand{\X}        {\mathbb X}

\newcommand{\R}      {\mathbb R}

\renewcommand{\P}      {\mathbb P}

%

%\newcommand{\eqdef}     {\stackrel{\triangle}{=}}
%\newcommand{\eqdef}     {\stackrel{\textrm{\tiny def}}{=}}

%                       \joinrel\rightarrow}\;}}

\newcommand{\cv}        {\mathop{\;{\rightarrow}\;}}

\newcommand{\norm}   [1] {\left\Vert #1 \right\Vert}
\newcommand{\normtv}   [1] {\left\Vert #1 \right\Vert_{\textrm{\tiny TV}}}

\newcommand{\crochet}[1] {\langle #1 \rangle}

\newcommand{\ot}        {\leftarrow}
\newcommand{\carre}     {\hfill$\Box$}

% --- les lettres tt (math)

% --- les lettres sans serif (math)

% --- les lettres bold face (math)

% --- les lettres blackboard (math)

% --- les lettres calligraphiques

\newcommand{\BB}   {{\mathcal B}}
\newcommand{\CC}   {{\mathcal C}}
\newcommand{\DD}   {{\mathcal D}}
\newcommand{\EE}   {{\mathcal E}}

\newcommand{\NN}   {{\mathcal N}}
\newcommand{\MM}   {{\mathcal M}}

\newcommand{\PP}   {{\mathcal P}}

\newcommand{\RR}   {{\mathcal R}}

\newcommand{\XX}   {{\mathcal X}}

% --- integration

\newcommand{\rmd}   {{{\textrm{\upshape d}}}}

% --- utilitaires
\newcommand{\dontforget}[1]
{{\mbox{}\\\noindent\rule{1cm}{2mm}\hfill  #1 \hfill\rule{1cm}{2mm}}\typeout{---------- #1 ------------}}

\renewcommand{\epsilon}{\varepsilon}

% --- bold math

\def\dobm{
    \copy1\kern-\wd1\kern0.05ex\copy1\kern-\wd1\kern0.05ex\box1}

% --- modifie enumerate

%\renewcommand{\phi}{\varphi}
\newcommand{\fenumi}  {\textrm{\rm({\textit{i}}\/)}}
\newcommand{\fenumii} {\textrm{\rm({\textit{ii}}\/)}}

% --- logos
\newcommand
      {\sysdys}
      {{\sf S\kern-.15em\raise.3ex\hbox{Y}\kern-.15em
            SD\kern-.15em\raise.3ex\hbox{Y}\kern-.15emS}}

%\floatname{algorithm}{{\sc Alg.}}
%\renewcommand{\listalgorithmname}{Liste des algorithmes}
%
%\floatname{algorithmmatlab}{Fonction \texttt{matlab}}
%\newcommand{\listalgorithmmatlabname}{Liste des fonctions \texttt{matlab}}

\newcommand{\normm}[1]{{\vert\kern-0.25ex\vert\kern-0.25ex\vert #1 
    \vert\kern-0.25ex\vert\kern-0.25ex\vert}}
\renewcommand{\X}{\mathcal{X}}
\renewcommand{\L}{\mathcal{L}}

\newcommand{\F}{\mathcal{F}}
\renewcommand{\C}{\mathcal{C}}

\renewcommand{\M}{\mathcal{M}}
\renewcommand{\EE}{\mathbb{E}}
\renewcommand{\PP}{\mathbb{P}}
\renewcommand{\RR}{\mathbb{R}}
\renewcommand{\R}{\mathbb{R}}
\renewcommand{\NN}{\mathbb{N}}

\newcommand{\dif}{\mathrm{d}}

\newcommand{\ds}{d_{\textrm{\tiny\rm S}}}
\newcommand{\dtv}{d_{\textrm{\tiny\rm TV}}}
\newcommand{\dpr}{d_{\textrm{\tiny\rm PR}}}

\newcommand{\tauw}{\tau_{\textrm{\tiny\rm w}}}

%Anyway, the default is that * in math mode is already a binary 
%operator, so $f*g$ will do the right thing. A BIN gives more space than 
%an ORDinary symbol $f\mathord{*}g$, even more than an OPerator: 
%$f\mathop{*}g$, a little less than a RELation: $f\mathrel{*}g$

\newcommand{\nub}{\bar\nu}

\newcommand{\lambdab}{\bar\lambda}
\newcommand{\Exp}{{\textrm{\rm Exp}}}
\newcommand{\tmax}{t_{\textrm{\tiny\rm max}}}
\newcommand{\rmax}{r_{\textrm{\tiny\rm max}}}

\newcommand{\Sin}{{\mathbf s}_{\textrm{\tiny\rm in}}}
\renewcommand{\mmax}{m_{\textrm{\tiny\rm max}}}
\newcommand{\mdiv}{m_{\textrm{\tiny\rm div}}}

\newcommand{\rhos}{\rho_{\textrm{\tiny\rm\!\! s}}}
\newcommand{\rhog}{\rho_{\textrm{\tiny\rm\!\! g}}}

\renewcommand{\eqdef}     {\stackrel{{\textrm{\rm\tiny def}}}{=}}

\newcommand{\norme}[1]{\left\Vert #1 \right\Vert }

% --- pour avoir un u^- plus joli
%\newcommand{\umoins}{{\smash{u^-}}}
\newcommand{\umoins}{{\smash{u^{\raisebox{-1pt}{\scriptsize\scalebox{0.5}{$-$}}}}}}
%\newcommand{\umoins}{{{u^{\raisebox{-1pt}{\scriptsize\scalebox{0.5}{$-$}}}}}}

%---------------------------------------------------------------------
% variable booléenne pour faire apparaitre ou non les commentaires
%---------------------------------------------------------------------

\usepackage{ifthen}    % Tests if/then/else
\newboolean{showComments}        % define a new boolean variable
\setboolean{showComments}{true}  % If set to true, comments will 
                                 % be inserted, otherwise ignored
\newcommand{\fnote}[1]
    {{\mbox{}\\\noindent\color{red}
    \rule{1cm}{2mm}\hfill  #1 \hfill\rule{1cm}{2mm}}
    \typeout{---------- #1 ------------}}

\renewcommand{\dontforget}[1]
      {\ifthenelse {\boolean{showComments}} {{\color{red}(#1)}} {}}
\newcommand{\Dontforget}[1]
      {\ifthenelse {\boolean{showComments}} {\fnote{#1}} {}}

%%%%%%%%%%%%%%%%%%%%%%%%%%%%%%%%%%%%%%%%%%%%%%%%%%%%%%%%%%%%%%%%%%%%%%
%%%%%%%%%%%%%%%%%%%%%%%%%%%%%%%%%%%%%%%%%%%%%%%%%%%%%%%%%%%%%%%%%%%%%%
%%%%%%%%%%%%%%%%%%%%%%%%%%%%%%%%%%%%%%%%%%%%%%%%%%%%%%%%%%%%%%%%%%%%%%
\begin{document}
%%%%%%%%%%%%%%%%%%%%%%%%%%%%%%%%%%%%%%%%%%%%%%%%%%%%%%%%%%%%%%%%%%%%%%
%%%%%%%%%%%%%%%%%%%%%%%%%%%%%%%%%%%%%%%%%%%%%%%%%%%%%%%%%%%%%%%%%%%%%%
%%%%%%%%%%%%%%%%%%%%%%%%%%%%%%%%%%%%%%%%%%%%%%%%%%%%%%%%%%%%%%%%%%%%%%

%%%%%%%%%%%%%%%%%%%%%%%%%%%%%%%%%%%%%%%%%%%%%%%%%%%%%%%%%%%%%%%%%%%%%%
\title{A mass-structured individual-based model of the chemostat:\\
convergence and simulation}
\author{Fabien Campillo\thanks{INRIA\,, \texttt{Fabien.Campillo@inria.fr}} \and Coralie Fritsch\thanks{Montpellier 2 University and INRA/MIA\,, \texttt{Coralie.Fritsch@supagro.inra.fr}\protect\\ Fabien Campillo and Coralie Fritsch are members of the MODEMIC joint INRA and INRIA project-team.
MODEMIC Project-Team, INRA/INRIA, UMR MISTEA, 2 place Pierre Viala,
34060 Montpellier cedex 01, France.}}
%%%%%%%%%%%%%%%%%%%%%%%%%%%%%%%%%%%%%%%%%%%%%%%%%%%%%%%%%%%%%%%%%%%%%%

%%%%%%%%%%%%%%%%%%%%%%%%%%%%%%%%%%%%%%%%%%%%%%%%%%%%%%%%%%%%%%%%%%%%%%
\maketitle
%%%%%%%%%%%%%%%%%%%%%%%%%%%%%%%%%%%%%%%%%%%%%%%%%%%%%%%%%%%%%%%%%%%%%%

%%%%%%%%%%%%%%%%%%%%%%%%%%%%%%%%%%%%%%%%%%%%%%%%%%%%%%%%%%%%%%%%%%%%%%
\begin{abstract}
We propose a model of chemostat where the bacterial population is individually-based, each bacterium is explicitly represented and has a mass evolving continuously over time. The substrate concentration is represented as a conventional ordinary differential equation. These two components are coupled with the bacterial consumption. Mechanisms acting on the bacteria are explicitly described (growth, division and up-take). Bacteria interact via consumption. We set the exact Monte Carlo simulation algorithm of this model and its mathematical representation as a stochastic process. We prove the convergence of this process to the solution of an integro-differential equation when the population size tends to infinity. Finally, we propose several numerical simulations.
\paragraph{Keywords:}
individually-based model (IBM), 
mass-structured chemostat model,
large population asymptotics, 
integro-differential equation, 
ecological population model, 
Monte Carlo.

\paragraph{Mathematics Subject Classification (MSC2010):}
 60J80, 60J85 (primary); 37N25, 92D25 (secondary).

\end{abstract}
%%%%%%%%%%%%%%%%%%%%%%%%%%%%%%%%%%%%%%%%%%%%%%%%%%%%%%%%%%%%%%%%%%%%%%

%%%%%%%%%%%%%%%%%%%%%%%%%%%%%%%%%%%%%%%%%%%%%%%%%%%%%%%%%%%%%%%%%%%%%%
%%%%%%%%%%%%%%%%%%%%%%%%%%%%%%%%%%%%%%%%%%%%%%%%%%%%%%%%%%%%%%%%%%%%%%
\section{Introduction}
\label{sec.intro}
%%%%%%%%%%%%%%%%%%%%%%%%%%%%%%%%%%%%%%%%%%%%%%%%%%%%%%%%%%%%%%%%%%%%%%
%%%%%%%%%%%%%%%%%%%%%%%%%%%%%%%%%%%%%%%%%%%%%%%%%%%%%%%%%%%%%%%%%%%%%%

The chemostat is a biotechnological process of continuous culture developed in the 50s \citep{monod1950a,novick1950a} and which is at the heart of several industrial applications as well as laboratory devices \citep{smith1995a}. Bioreactors operating under this mode are maintained under perfect mixing conditions and usually at large bacterial population sizes.

These features allow such processes to be modeled by ordinary (deterministic) differential systems since, in large populations and under certain conditions, demographic randomness can be neglected. Moreover, perfect mixing conditions permit us to neglect the spatial distribution and express these models in terms of mean concentration in the chemostat. In its simplest version, the chemostat model is expressed as a system of two coupled ordinary differential equations respectively for biomass and substrate concentrations \citep{smith1995a}. This approach extends to the case of several bacterial species and several substrates. The simplicity of such models makes possible the development of efficient tools for automatic control and the improvement of the associated biotechnological processes. 
However, it is increasingly necessary to develop models beyond the standard assumption of perfect mixing with a bacterial population possessing uniform characteristics. For this purpose, several paths are available which take into account the different sources of randomness or the structuring of the bacterial population and its discrete nature. All these aspects have been somewhat neglected in previous models.

In addition, the  recent development of so-called ``omics'' approaches such as genomics and large-scale DNA sequencing technology are the basis of a renewed interest in chemostat techniques \citep{hoskisson2005a}. These bacterial cultures may also be considered as laboratory models to study selection phenomena and evolution in bacterial ecosystems.

Beyond classical models based on systems of (deterministic) ordinary differential equations (ODE) which neglect any structuring of the bacterial populations, have also appeared in the 60's and 70's bacterial growth models structured  in size or mass based on integro-differential equations (IDE) \citep{fredrickson1967a,ramkrishna1979a}, ​​see also the monograph \cite{ramkrishna2000a} on these so-called population balance equations for  growth-frag\-men\-tation models.

Various research papers have been devoted to the stochastic modeling of the chemostat.
\cite{crump1979a} propose a model of pure jump for the biomass growth coupled with a differential equation for the substrate evolution. \cite{stephanopoulos1979a} propose a model with randomly fluctuating dilution rate, hence the noise is rather of an environmental nature, whereas in the previous model it is rather demographic. \cite{grasman2005a} propose a chemostat model at three trophic levels where randomness appears only in the upper trophic level. \cite{imhof2005a} also propose a  stochastic chemostat model but, as in previous models, the noise is simply ``added'' to the classical deterministic model. In contrast, in \cite{campillo2011chemostat} article the demographic noise emerges from a description of the dynamics at the microscopic level.

In recent years, many models for the evolution in chemostats have been proposed either using integro-differential equations \citep{diekmann-odo2005a,mirrahimi2012a,mirrahimi2012b} or individual-based models (IBM) \citep{champagnat2013a}.

There are also computer models, for example \cite{lee-min-woo2009a} propose an IBM  structured in mass for a `` batch'' culture process. In this model, as in \citep{champagnat2013a} the dynamics of the substrate is described by deterministic differential equations. 
Indeed, the difference in scale between a bacterial cell and a substrate molecule guaranties that, at the scale of the bacterial population, the dynamics of the substrate can be correctly represented by the fluid limit model, while the dynamics of the bacterial population is discrete and random.

We focus here on an individual-based model (IBM) of the chemostat. In contrast to deterministic models with continuous variables, in IBMs all variables, or at least some of them, are stochastic and discrete. These models are generally cumbersome in terms of simulation and difficult to analyze mathematically, but they can be useful in accounting for phenomena inaccessible in earlier models. The majority of IBMs are described initially in natural languages with simple rules. From there they are described as  computer models in terms of algorithms, it is this approach that is often termed IBM.
Nonetheless, they can also be described mathematically using a Markov process.
The advantage of this approach is to allow the mathematical analysis of the IBM.
In particular, as we will see here,  the convergence of the IBM to an integro-differential model can be demonstrated.
 The latter approach has been developed in a series of papers: for a simple model of position \citep{fournier2004a}, for the evolution of trait structured population \citep{champagnat2006b}, which is then extended to take into account the age of individuals \citep{vietchitran2006a,vietchitran2008a}. More recently \citet{champagnat2013a} proposed a chemostat model with multiple resources where the bacterial population has a genetic trait subject to evolution.

In the context of a growth-fragmentation model, \cite{hatzis1995a}  proposed an IBM, without  substrate variables, and draw a parallel between this model and an integro-differential model.

\medskip

In Section \ref{sec.model} we introduce the IBM where each individual in the bacterial population is explicitly represented by its mass. We describe the phenomenon which the model will take into account at a microscopic scale:   
individual cell growth,  cell division, up-take 
(substrate and bacteria are constantly withdrawn from the chemostat vessel), as well as the individual consumption described as a coupling with the ordinary differential equation which models the dynamics of the substrate. Then we describe the associated exact Monte Carlo algorithm, noting that this algorithm is asynchronous in time, i.e. different events occur at random instants which are not predetermined.

In Section \ref{sec.notations} we  introduce some notation, then in Section \ref{sec.processus.microscopique} we construct the stochastic process associated with the IBM as a Markov process with values in the space of finite measures over the state-space of masses.

In Section \ref{sec.convergence} we prove the convergence, in large population limit, of the IBM towards an integro-differential equation of the population-balance equation type \citep{fredrickson1967a,ramkrishna1979a,ramkrishna2000a}  coupled with an equation for the dynamics of the substrate. Finally in Section \ref{sec.simulations}  we present several numerical simulations.

%%%%%%%%%%%%%%%%%%%%%%%%%%%%%%%%%%%%%%%%%%%%%%%%%%%%%%%%%%%%%%%%%%%%%%
%%%%%%%%%%%%%%%%%%%%%%%%%%%%%%%%%%%%%%%%%%%%%%%%%%%%%%%%%%%%%%%%%%%%%%
\section{The model}
\label{sec.model}
%%%%%%%%%%%%%%%%%%%%%%%%%%%%%%%%%%%%%%%%%%%%%%%%%%%%%%%%%%%%%%%%%%%%%%
%%%%%%%%%%%%%%%%%%%%%%%%%%%%%%%%%%%%%%%%%%%%%%%%%%%%%%%%%%%%%%%%%%%%%%

%%%%%%%%%%%%%%%%%%%%%%%%%%%%%%%%%%%%%%%%%%%%%%%%%%%%%%%%%%%%%%%%%%%%%%
\subsection{Description of the dynamics}
%%%%%%%%%%%%%%%%%%%%%%%%%%%%%%%%%%%%%%%%%%%%%%%%%%%%%%%%%%%%%%%%%%%%%%

We consider an \emph{individual-based model (IBM) structured in mass} where the
bacterial population is represented as individuals growing in a perfectly mixed vessel of volume $V$ (l). Each individual is solely characterized by its mass $x\in\X \eqdef [0,\mmax]$, this model does not take into account spatialization. At time $t$ the system is characterized by the pair:
\begin{align}
\label{eq.xi}
  (S_{t},\nu_{t})
\end{align}
where
\begin{enumerate}

\item $S_{t}$ is the \emph{substrate concentration} (mg/l) which is assumed to be uniform in the vessel;

\item $\nu_{t}$ is the \emph{bacterial population}, that is $N_{t}$ individuals
and the mass of the individual number $i$ will be denoted  $x^i_{t}$ (mg)  
for $i=1,\dots,N_{t}$. 
It will be convenient to represent the population $\{x^i_{t}\}_{i=1,\dots,N_{t}}$ at time $t$ as the following punctual measure:
\begin{align}
\label{eq.nu}
  \nu_t(\rmd x)=\sum_{i=1}^{N_t}\delta_{x_t^i}(\rmd x)\,.
\end{align}
\end{enumerate}

The dynamics of the chemostat combines \emph{discrete evolutions}, cell division and bacterial up-take, as well as \emph{continuous evolutions}, the growth of each individual and the dynamics of the substrate. We now describe the four components of the dynamics, first the discrete ones and then the continuous ones which occur between the discrete ones.

%---------------------------
\begin{enumerate}
%......
\vskip0.8em
\item{\textbf{Cell division}} --
\emph{Each individual of mass $x$ divides at rate $\lambda(s,x)$ into two
individuals of respective masses  $\alpha\,x$ and $(1-\alpha)\,x$:
\begin{center}
\includegraphics[width=5cm]{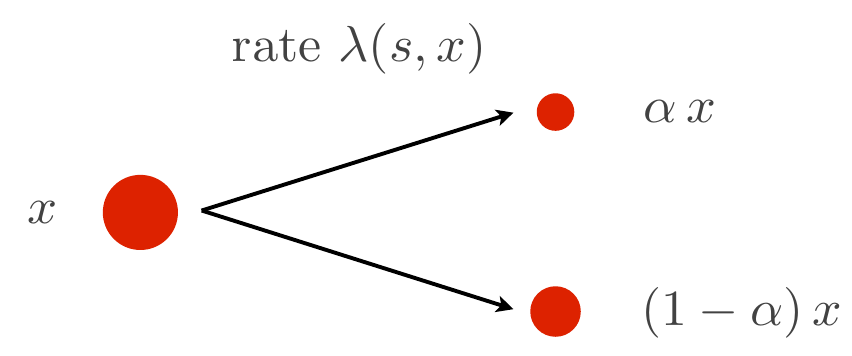}
\end{center}
where $\alpha$ is distributed according to a given probability distribution  $Q(\rmd\alpha)$ on $[0,1]$,
and  $s$ is the substrate concentration.} 

\medskip

For instance, the function $\lambda(s,x)$ does not depend on  the substrate concentration $s$ and could be of the following form which will be used in the simulation presented in Section \ref{sec.simulations}:
\begin{center}
\includegraphics[width=4cm]{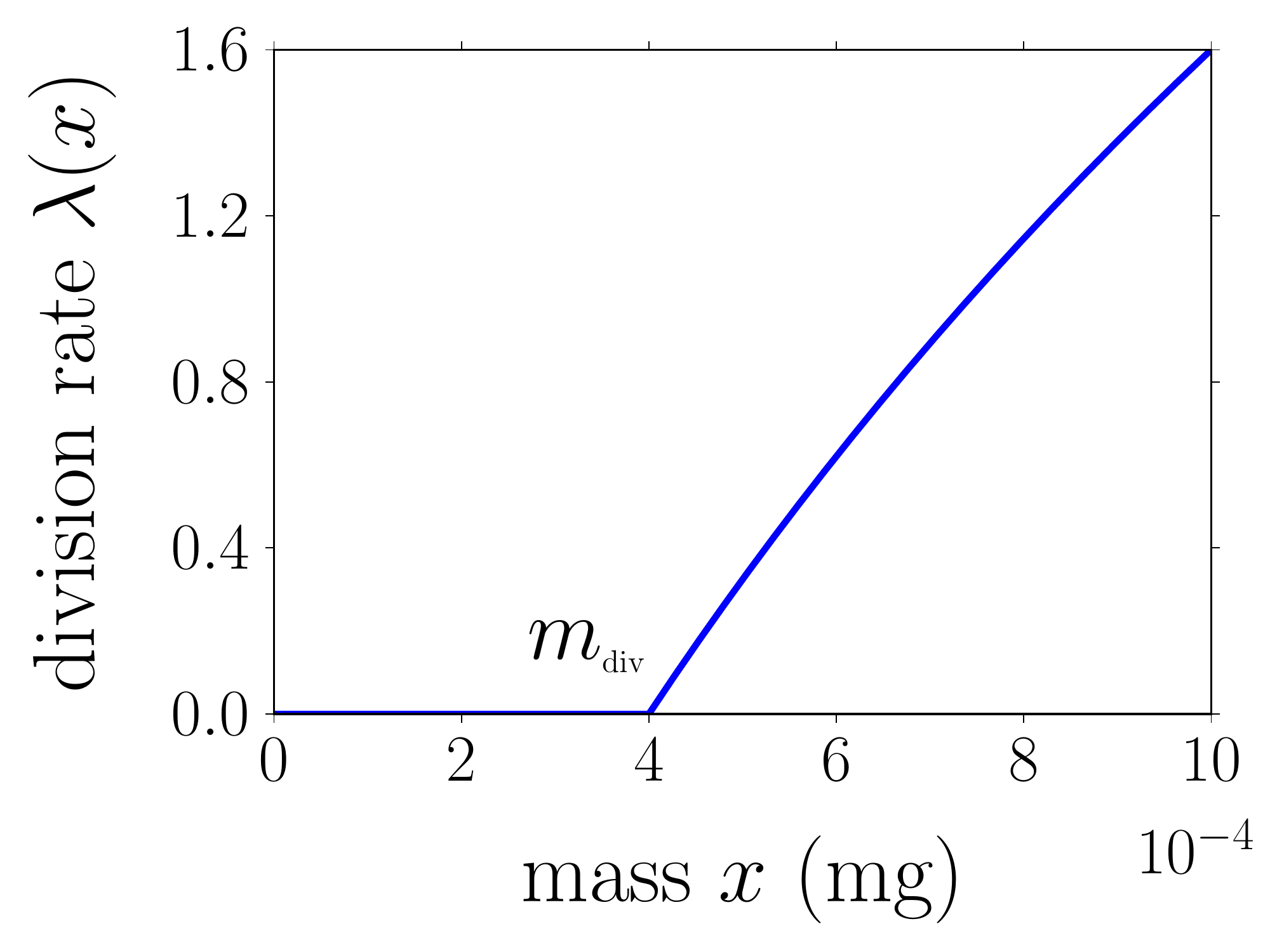}
\end{center} 
Thus, below a certain mass $\mdiv$ it is assumed that the cell cannot divide. There are models where the rate also depends on the concentration $s$, see for example  \citep{daoutidis2002a,henson2003b}.

\medskip

We suppose that the distribution $Q(\rmd\alpha)$ is symmetric with respect to $\frac{1}{2}$, i.e. $Q(\rmd\alpha)=Q(1-\rmd \alpha)$. It also may admit a density  $Q(\rmd \alpha)=q(\alpha)\,\rmd \alpha$ with the same symmetry:
\begin{center}
\includegraphics[width=4cm]{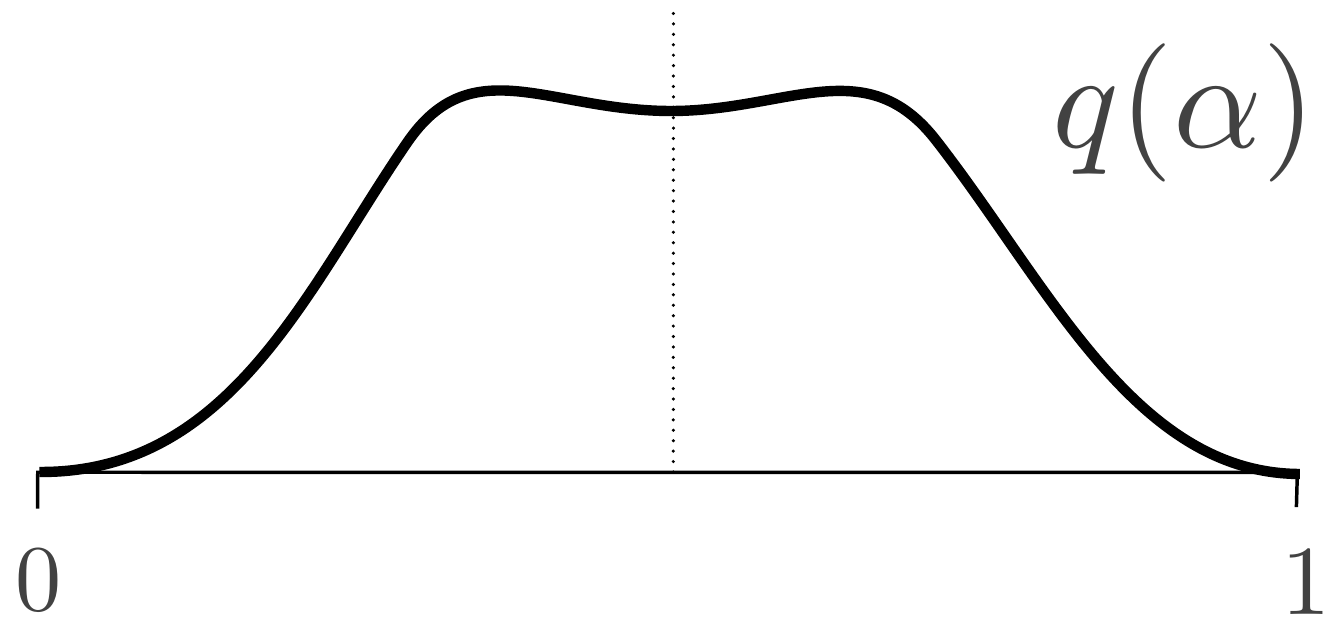}
\end{center}

Thus, the  division kernel of an individual of mass $x$ is $ K(x,\rmd y) = Q(\frac{1}{x}\,\rmd y)$ with support $[0,x]$. In the case of perfect mitosis, an individual of mass $x$ is divided into two individuals of masses $\frac x 2$ and then $Q(\rmd \alpha)=\delta_{1/2}(\rmd \alpha)$. 

\medskip

\emph{It is therefore assumed that, relative to their mass, the division kernel is the same for all individuals. This allows us to reduce the model to a single division kernel. More complex scenarios can also be investigated.}

%......
\vskip0.8em
\item{\textbf{Up-take}} --
\emph{Each individual is withdrawn from the chemostat at rate~$D$.}
One places oneself in the framework of a \emph{perfect mixing} hypothesis, where individuals are uniformly distributed in the volume $V$ independently from their mass. During a time step $\delta$, a total volume of $D\,V\,\delta$ is withdrawn from the chemostat:
\begin{center}
\includegraphics[width=10cm]{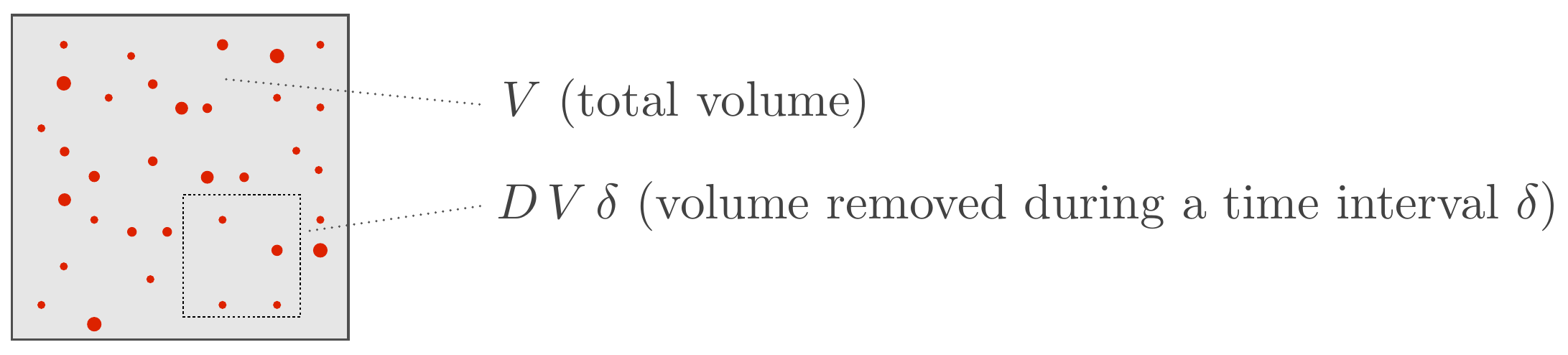}
\end{center}
and therefore, if we assume that all individuals have the same volume considered as negligible, during this time interval $\delta$, an individual has a probability $D\,\delta$ to be withdrawn from the chemostat, $D$ is the dilution rate. This rate could possibly depend on the mass of the individual.

\end{enumerate}
When the division of an individual occurs, the size of the population instantaneously jumps   from $N_{t}$ to $N_{t}+1$; when an individual is withdrawn from the vessel, the size of the population jumps instantaneously from $N_{t}$ to $N_{t}-1$; between each discrete event the size $N_{t}$ remains constant and the chemostat evolves according to the following two continuous mechanisms:
\begin{enumerate}
\setcounter{enumi}{2}
%......
\vskip0.8em
\item {\textbf{Growth of each individual}} --
\emph{Each individual of mass $x$ growths at speed $\rhog(S_{t},x)$}:
\begin{align}
\label{eq.masses}
   \dot x^i_{t} 
   =
   \rhog(S_{t},x^i_{t})\,, \quad i=1,\dots,N_{t}
\end{align}
where $\rhog:\R^2_{+}\mapsto\R_{+}$ is given. For the simulation we will consider the following Gompertz model:
\begin{align*}
   \rhog(s,x) \eqdef r(s)\,\log\Big(\frac{\mmax}{x}\Big)\,x
\end{align*}
where the growth rate $r(s)$ depends on the substrate concentration according to the Monod kinetics:
\begin{align*}
    r(s)=\rmax\,\frac{s}{k_r+s}
\end{align*}
here $\mmax$ is the maximum weight that an individual can reach. In Section
\ref{subsec.modeles.deterministes} we also present an example of a function $\rhog(s,x)$ linear in $x$ which will lead to the classical model of chemostat.

%......
\vskip0.8em
\item{\textbf{Dynamic of the substrate concentration}} -- 
\emph{The substrate concentration evolves according to the ordinary differential equation:
\begin{align}
\label{eq.substrat}
  \dot S_t = \rhos(S_{t},\nu_{t})
\end{align}}
where
\begin{align*}
  \rhos (s,\nu)
  &\eqdef
  D(\Sin-s)-k\, \mu(s,\nu)\,,
\\
  \mu(s,\nu)
  &\eqdef
  \frac{1}{V} \int_{\XX} \rhog(s,x)\,\nu(\rmd x)
  =    
  \frac{1}{V} \sum_{i=1}^{N} \rhog(s,x^i)
\end{align*}
with $\nu=\sum_{i=1}^N \delta_{x^i}$;
$D$ is the dilution rate (1/h), 
$\Sin$ is the input concentration (mg/l),
$k$ is the stoichiometric coefficient (inverse of the yield coefficient),
and  $V$ is the representative volume (l).
Mass balance leads to Equation \eqref{eq.substrat} and the initial condition $S_{0}$ may be random.
\end{enumerate}
%---------------------------

To ensure the existence and uniqueness of solutions of the ordinary differential equations \eqref{eq.masses} and \eqref{eq.substrat}, we assume
that application $\rhog(s,x)$ is Lipschitz continuous w.r.t. $s$ uniformly in $x$:
\begin{align}
\label{hyp.rhog.lipschitz}
  \bigl|\rhog(s_1,x)-\rhog(s_2,x)\bigr|
  \leq 
  k_g \,|s_1-s_2|
\end{align}
for all $s_1,\,s_2 \geq 0$ and all $x\in\XX$. It is further assumed that:
\begin{align}
\label{hyp.rhog.borne}
  0
  \leq
  \rhog(s,x)
  \leq 
  \bar g
\end{align}
for all $(s,x)\in\RR_+ \times \X$, and that 
 in the absence of substrate the bacteria do not grow:
\begin{align}
\label{hyp.rhog.nulle.en.0}
  \rhog(0,x)
  =
  0
\end{align}
for all $x\in \X$. To ensure that the mass of a bacterium  
stays between $0$ and $\mmax$, it is finally assumed that:
\begin{align}
\label{hyp.rhog.nulle.en.x=mmax}
  \rhog(s,\mmax)
  =
  0
\end{align}
for any $s\geq 0$.

%%%%%%%%%%%%%%%%%%%%%%%%%%%%%%%%%%%%%%%%%%%%%%%%%%%%%%%%%%%%%%%%%%%%%%
\subsection{Algorithm}
%%%%%%%%%%%%%%%%%%%%%%%%%%%%%%%%%%%%%%%%%%%%%%%%%%%%%%%%%%%%%%%%%%%%%%

%--------------------------
\begin{algorithm}
\begin{center}
\begin{minipage}{14cm}
\begin{algorithmic}
\STATE $t\ot 0$
\STATE sample $(S_0,\nu_0=\sum_{i=1}^{N_{0}}\delta_{x^i_{t}})$
\WHILE {$t\leq \tmax$}
  \STATE $N \ot \crochet{\nu_{t},1}$
  \STATE $\tau \ot (\bar\lambda+D)\,N$
  \STATE $\Delta t \sim \Exp(\tau)$
  \STATE integrate the equations for the mass \eqref{eq.masses} 
    and the substrate \eqref{eq.substrat} over $[t,t+\Delta t]$
  \STATE $t \ot t+\Delta t$
  \STATE draw $x$ uniformly in $\{x^i_{t}\,;\,i=1,\dots,N_{t}\}$  
  \STATE $u\sim U[0,1]$ 
  \IF {$u\leq \lambda(S_{t},x)/(\lambdab+D)$}
      \STATE $\alpha \sim Q$
      \STATE $\nu_{t} \ot \nu_{t} -\delta_{x}+\delta_{\alpha\,x}
              +\delta_{(1-\alpha)\,x}$
      \COMMENT{division}
  \ELSIF{$u\leq (\lambda(S_{t},x)+D)/(\lambdab+D)$}
      \STATE $\nu_{t} \ot \nu_{t} -\delta_{x}$
      \COMMENT{up-take}
  \ENDIF
\ENDWHILE
\end{algorithmic}
\end{minipage}
\end{center}
\caption{\itshape ``Exact'' Monte Carlo  simulation of the individual-based model:
approximations only lie in the numerical integration of the ODEs and in 
the pseudo-random numbers generators.}
\label{algo.ibm}
\end{algorithm}
%--------------------------

In the model described above, the division rate $\lambda(s,x)$ depends on the concentration of substrate $s$ and on the mass  $x$ of each individual which continuously evolves according to the system of coupled ordinary differential equations \eqref{eq.masses} and \eqref{eq.substrat}, so to simulate the division of the cell we make use of a rejection sampling technique. It is assumed that there exists $\lambdab<\infty$ such that:
\[
  \lambda(s,x)\leq \lambdab
\]
hence an upper bound for the rate of event, division and up-take combined, at the population level is given by:
\[
    \tau \eqdef (\bar\lambda+D)\,N\,.
\]
At time $t+\Delta t$ with $\Delta t\sim \Exp(\tau)$, we determine if an event has occurred and what is its type by acceptance/rejection.
To this end, the masses of the $N$ individuals and the substrate concentration evolve according to the coupled ODEs \eqref{eq.masses} and \eqref{eq.substrat}.
Then we chose uniformly at random an individual within the population $\nu_{(t+\Delta t)^-}$, that is the population at time  $t+\Delta t$ before any possible event, let $x_{(t+\Delta t)^-}$ denotes its mass, then:
\begin{enumerate}

\item With probability:
\[
   \frac{\bar\lambda}{(\bar\lambda+D)}
\]
we determine if there has been division by acceptance/rejection:
\begin{itemize}
\item division occurs, that is:
\begin{align}
\label{eq.event.division}
   \nu_{t+\Delta t}
   =
   \nu_{(t+\Delta t)^-}
   -\delta_{x_{(t+\Delta t)^-}}  
   +\delta_{\alpha\,x_{(t+\Delta t)^-}}  
   +\delta_{(1-\alpha)\,x_{(t+\Delta t)^-}}  
   \qquad
   \textrm{with }\alpha\sim Q
\end{align}
with probability $\lambda(S_{t},x_{(t+\Delta t)^-})/\bar\lambda$;
\item 
no event occurs with probability $1-\lambda(S_{t},x_{(t+\Delta t)^-})/\bar\lambda$.
\end{itemize}
In conclusion, the event \eqref{eq.event.division} occurs with probability:
\[
  \frac{\lambda\bigl(S_{t},x_{(t+\Delta t)^-}\bigr)}{\bar\lambda}
  \,\frac{\bar\lambda}{(\bar\lambda+D)} 
  = 
  \frac{\lambda\bigl(S_{t},x_{(t+\Delta t)^-}\bigr)}{(\bar\lambda+D)}\,.
\]

\item With probability:
\[
   \frac{D}{(\bar\lambda+D)}
   =
   1-\frac{\bar\lambda}{(\bar\lambda+D)}
\]
the individual is withdrawn, that is:
\begin{align}
\label{eq.event.soutirage}
   \nu_{t+\Delta t}
   =
   \nu_{(t+\Delta t)^-}
   -\delta_{x_{(t+\Delta t)^-}}  
\end{align}
\end{enumerate}
Finally, the events and the associated probabilities are:
\begin{itemize}
\item 
division \eqref{eq.event.division} with probability
$\lambda(S_{t},x_{(t+\Delta t)^-})/(\bar\lambda+D)$,
\item
up-take \eqref{eq.event.soutirage} with probability 
${D}/{(\bar\lambda+D)}$
\end{itemize}
and no event (rejection) with the remaining probability.
The details are given in Algorithm~\ref{algo.ibm}.

Technically, the numbering of individuals is as follows: at the initial time individuals are numbered from $1$ to $N$, in case division the daughter cell $\alpha \,x$ keeps the index of the parent cell and the daughter cell $(1-\alpha)\,x$ takes the index $N+1$; in case of the up-take, the individual $N$ acquires  the index of the withdrawn cell.

%%%%%%%%%%%%%%%%%%%%%%%%%%%%%%%%%%%%%%%%%%%%%%%%%%%%%%%%%%%%%%%%%%%%%%
%%%%%%%%%%%%%%%%%%%%%%%%%%%%%%%%%%%%%%%%%%%%%%%%%%%%%%%%%%%%%%%%%%%%%%
\section{Notations}
\label{sec.notations}
%%%%%%%%%%%%%%%%%%%%%%%%%%%%%%%%%%%%%%%%%%%%%%%%%%%%%%%%%%%%%%%%%%%%%%
%%%%%%%%%%%%%%%%%%%%%%%%%%%%%%%%%%%%%%%%%%%%%%%%%%%%%%%%%%%%%%%%%%%%%%

Before proposing an explicit mathematical description of the process $(\nu_{t})_{t\geq 0}$ we introduce some notations.

%%%%%%%%%%%%%%%%%%%%%%%%%%%%%%%%%%%%%%%%%%%%%%%%%%%%%%%%%%%%%%%%%%%%%%
\subsection{Punctual measures}
%%%%%%%%%%%%%%%%%%%%%%%%%%%%%%%%%%%%%%%%%%%%%%%%%%%%%%%%%%%%%%%%%%%%%%

Notation \eqref{eq.nu} designating the bacterial population seems somewhat abstract but it will bridge the gap between the ``discrete'' -- counting punctual measures --
and the ``continuous'' -- continuous measures of the population densities --  in the context of the asymptotic large population analysis. Indeed for any measure $\nu(\rmd x)$ defined on $\R_{+}$ and any function $\varphi:\R_{+}\mapsto\R$, we define:
\[
  \crochet{\nu,\varphi}
  \eqdef
  \int_{\R_{+}}\varphi(x)\,\nu(\rmd x)\,.
\]
This notation is valid for continuous measures as well as for punctual measures $\nu_{t}(\rmd x)$ defined by \eqref{eq.nu}, in the latter case 
$\crochet{\nu_{t},\varphi} = \sum_{i=1}^{N_{t}} \varphi(x^i_{t})$.

Practically, this notation allows us to link to macroscopic quantities, e.g. 
at time $t$ the \emph{population size} is:
\[
  N_{t} = \crochet{\nu_{t},1}
\]
and the  \emph{total biomass} is:
\[
  X_{t} \eqdef \crochet{\nu_{t},I}
  = \sum_{i=1}^{N_{t}} x^i_{t}
\]
where $1(x)\equiv 1$ and $I(x)\equiv x$. Finally:
\[
   x\in\nu_{t}=\sum_{i=1}^{N_{t}}\delta_{x^i_{t}}(\rmd x)
\]
will denote any individual among $\{x^1_{t},\dots,x^{N_{t}}_{t}\}$.

The set of finite and positive measures on $\X$ is denoted  $\M_{F}(\XX)$,
and $\M(\XX)$ is the subset of punctual finite measures on $\X$:
\[
  \M(\XX)
  \eqdef
  \left\{
    	\sum_{i=1}^N \delta_{x^i} \;; \; N \in \NN, \, x^i \in \X 
  \right\}
\]
where by convention $\sum_{i=1}^0\delta_{x^i}$ is the null measure.

%%%%%%%%%%%%%%%%%%%%%%%%%%%%%%%%%%%%%%%%%%%%%%%%%%%%%%%%%%%%%%%%%%%%%%
\subsection{Growth flow}
%%%%%%%%%%%%%%%%%%%%%%%%%%%%%%%%%%%%%%%%%%%%%%%%%%%%%%%%%%%%%%%%%%%%%%

Let:
\[
\begin{array}{rrcl}
   A_{t} &: 
     \R_{+}\times\MM(\XX)  
     &\xrightarrow[]{\mbox{}\hskip1em} & 
     \R_{+}\times\MM(\XX)
     \\
   &(s,\nu)&\xrightarrow[]{\mbox{}\hskip1em} &
   A_{t} (s,\nu)
\end{array}
\]
be the differential flow associated with the couple system of ODEs \eqref{eq.substrat}--\eqref{eq.masses} apart from any event (division or up-take), i.e.:
\begin{align}
\label{eq.flot.A}
   A_{t}(s,\nu)
   =
   \Biggl(
   		A^0_{t}(s,\nu) \;,\; \sum_{i=1}^N\delta_{A^i_{t}(s,\nu)}
   \Biggr)
   \quad\textrm{with }\nu=\sum_{i=1}^N\delta_{x^i}
   \,.
\end{align}
where $A^0_{t}(s,\nu)$ and $(A^i_{t}(s,\nu)\,;\,i=1,\dots,N)$ are the coupled solutions of \eqref{eq.substrat}--\eqref{eq.masses} taken at time $t$ from the initial condition $(s,\nu)$, that is:
\begin{align*}
  \frac{\rmd}{\rmd t} 
   A_{t}^0(s,\nu)
   &= \rhos\Bigl(A_{t}^0(s,\nu),\sum_{i=1}^N \delta_{A_{t}^i(s,\nu)}\Bigr)
\\
   &= D\,(\Sin-A_{t}^0(s,\nu))
      -\frac{k}{V} \sum_{i=1}^N \rhog(A_{t}^0(s,\nu),A_{t}^i(s,\nu))\,,
   &
   A_{0}^0(s,\nu)=s\,,
\\[0.5em]
  \frac{\rmd}{\rmd t} 
   A_{t}^i(s,\nu)
   &=
   \rhog(A_{t}^0(s,\nu),A_{t}^i(s,\nu))\,,
   &
   A_{0}^i(s,\nu)=x^i
\end{align*}
for  $i=1,\dots,N$. Hence the flow $A_{t}(s,\nu)$ depends implicitly on the  size $N=\crochet{\nu,1}$ of the population $\nu$. 

\medskip

The stochastic process $(\nu_{t})_{t\geq 0}$ features a jump dynamics (division and up-take) and follows the dynamics of the flow $A_{t}$ between the jumps.
We can therefore generalize a  well-known formula for the pure jump process:
\begin{align}
\label{eq.dyn.saut.flot}
  \Phi(S_{t},\nu_{t})
  &=
  \Phi(A_{t}(S_{0},\nu_{0}))
  +
  \sum_{u\leq t} 
    \bigl[ 
       \Phi(A_{t-u}(S_{u},\nu_{u}))-\Phi(A_{t-u}(S_{u},\nu_{\umoins}))
    \bigr]  
  \,,\quad t\geq 0
\end{align}
for any function  $\Phi$ defined on $\R\times\MM(\XX)$. 

The sum $ \sum_{u\leq t}$ contains only a finite number of terms as the process $(\nu_{t})_{t\geq 0}$ admits only a finite number of jumps over any finite time interval. Indeed, the number of jumps in the process $(\nu_{t})_{t\geq 0}$ is bounded by a linear birth and death process with \textit{per capita} birth rate $\lambdab$
and \textit{per capita} death rate $D$ \citep{allen2003a}.

%%%%%%%%%%%%%%%%%%%%%%%%%%%%%%%%%%%%%%%%%%%%%%%%%%%%%%%%%%%%%%%%%%%%%%
%%%%%%%%%%%%%%%%%%%%%%%%%%%%%%%%%%%%%%%%%%%%%%%%%%%%%%%%%%%%%%%%%%%%%%
\section{Microscopic process}
\label{sec.processus.microscopique}
%%%%%%%%%%%%%%%%%%%%%%%%%%%%%%%%%%%%%%%%%%%%%%%%%%%%%%%%%%%%%%%%%%%%%%
%%%%%%%%%%%%%%%%%%%%%%%%%%%%%%%%%%%%%%%%%%%%%%%%%%%%%%%%%%%%%%%%%%%%%%

Let $(S_0,\nu_0)$ denote the initial condition of the process, it is a random variable with values in $\R_+ \times \M(\X)$.

The equation \eqref{eq.dyn.saut.flot} includes information on the flow, i.e. the dynamics between the jumps, but no information on the jumps themselves. To obtain an explicit equation for $(S_t,\nu_t)_{t\geq 0}$ we introduce Poisson random measures which manage the incoming of new individuals by cell division on the one hand, and the withdrawal of individuals by up-take on the other. To this end we consider two punctual Poisson random  measures  $N_1(\dif u, \dif j, \dif \alpha, \dif \theta)$ and $N_2(\dif u, \dif j)$
respectively defined on $\RR_+ \times  \NN^* \times \X \times [0,1]$ and $\RR_+ \times  \NN^*$ with respective intensity measures:
\begin{align*}
   n_1(\dif u, \dif j, \dif \alpha, \dif \theta)
   &\eqdef
   \lambdab\, \dif u \,\Big(\sum_{k \geq 0} 
       \delta_k(\dif j)\Big) \, Q(\dif \alpha) \, \dif \theta\,,
\\
   n_2(\dif u, \dif j)
   &\eqdef
   D\,\dif u \,\Big(\sum_{k \geq 0} \delta_k(\dif j)\Big)\,.
\end{align*}
Suppose that $N_1$, $N_{2}$, $S_{0}$ and $\nu_{0}$ are mutually independent.
Let $(\F_t)_{t \geq 0}$ be the canonical filtration generated by  $(S_0,\nu_0)$, $N_1$ and $N_2$.
According to \eqref{eq.dyn.saut.flot}, for any function  $\Phi$ defined on  
$\R\times\MM(\XX)$:
\begin{align}
\nonumber
  &\Phi(S_{t},\nu_{t})
  =
  \Phi(A_{t}(S_{0},\nu_{0}))
\\
\nonumber
  &
  \quad+
  \iiiint\limits_{[0,t]\times\N^*\times[0,1]^2}
        1_{\{j\leq N_{\umoins}\}} \, 
        1_{\{0\leq \theta \leq \lambda(S_{u},x_\umoins^j)/\lambdab\}}\,
        \bigl[\Phi(A_{t-u}(S_{u},\nu_{\umoins}
                -\delta_{x^j_\umoins}
                +\delta_{\alpha\,x^j_\umoins}
                +\delta_{(1-\alpha)\,x^j_\umoins}))
  \\[-0.7em]&\qquad\qquad\qquad\qquad\qquad\qquad\qquad
\nonumber
             -\Phi(A_{t-u}(S_{u},\nu_{\umoins}))\bigr]\,
		 		N_1(\dif u, \dif j, \dif \alpha, \dif \theta) 
\\[0.2em]
\label{eq.Phi.S.nu}
  &\quad+
		\iint\limits_{[0,t]\times\N^*} 
			1_{\{j\leq N_{\umoins}\}} \, 
			 \bigl[\Phi(A_{t-u}(S_{u},\nu_{\umoins}-\delta_{x^j_\umoins}))
			   -\Phi(A_{t-u}(S_{u},\nu_{\umoins}))\bigr]
			 			 \,	N_2(\dif u, \dif j)\,.
\end{align}
In particular, we obtain the following equation for the couple  $(S_{t},\nu_{t})$:
\begin{align}
\nonumber
  &(S_{t},\nu_{t})
  =
  A_{t}(S_{0},\nu_{0})
\\
\nonumber
  &
  \qquad+
  \iiiint\limits_{[0,t]\times\N^*\times[0,1]^2}
        1_{\{j\leq N_{\umoins}\}} \, 
        1_{\{0\leq \theta \leq \lambda(S_{u},x_\umoins^j)/\lambdab\}}\,
        \bigl[A_{t-u}(S_{u},\nu_{\umoins}
                -\delta_{x^j_\umoins}
                +\delta_{\alpha\,x^j_\umoins}
                +\delta_{(1-\alpha)\,x^j_\umoins})
\\[-0.7em]
\nonumber
&\qquad\qquad\qquad\qquad\qquad\qquad\qquad
             -A_{t-u}(S_{u},\nu_{\umoins})\bigr]\,
		 		N_1(\dif u, \dif j, \dif \alpha, \dif \theta) 
\\[0.2em]
\label{def.proc.S.nu}
  &\qquad+
		\iint\limits_{[0,t]\times\N^*} 
			1_{\{j\leq N_{\umoins}\}} \, 
			 \big[A_{t-u}(S_{u},\nu_{\umoins}-\delta_{x^j_\umoins})
			   -A_{t-u}(S_{u},\nu_{\umoins})\big]
			 			 \,	N_2(\dif u, \dif j)\,.
\end{align}

\medskip

From now on, we consider test functions $\Phi$ of the form:
\[
   \Phi(s,\nu)
   =
   F(s,\crochet{\nu,f})
\]
with $F \in C^{1,1}(\RR^+ \times \RR)$ and $f \in C^1(\X)$.

%------------------------------------------
\begin{lemma}
\label{lem.gen.inf}
For any $t>0$:
\begin{align}
\nonumber
  &
  F(S_t,\crochet{\nu_t, f})
  = 
  F(S_0,\crochet{\nu_0, f})
\\
\nonumber
  &\quad
   + \int_0^t  \left[
	 		\rhos (S_u,\nu_u) \, 
	 		\partial_s F\left(S_u, \, \left<\nu_u, f\right>\right)
	 		+ \left< \nu_u , \, \rhog(S_u,.)\,f'\right> \,
	 		\partial_x F\left(S_u, \, \left<\nu_u, f\right>\right)
	 	\right] \dif u 
\\
\nonumber  
  &\quad
  + \iiiint\limits_{[0,t]\times\N^*\times[0,1]^2} 
		1_{\{j\leq N_{\umoins}\}} \,
		1_{\{0\leq \theta \leq \lambda(S_{u},x_\umoins^j)/
				\lambdab\}}\, 
		\bigl[
			F(S_u,\crochet{\nu_{\umoins}
					-\delta_{x_\umoins^j}
					+\delta_{\alpha \, x_\umoins^j}
					+\delta_{(1-\alpha)\, x_\umoins^j}, f} 
			)
        \\[-0.9em] 
        \nonumber
        & \qquad\qquad\qquad\qquad\qquad\quad
                     \qquad\qquad\qquad\qquad\qquad
			-
			F(S_u, \crochet{\nu_{\umoins}, f}) 
	    \bigr] 
	    \;
		 N_1(\dif u, \dif j, \dif \alpha, \dif \theta)  
\\
\label{eq.F.f}
  &\quad  
  + \iint\limits_{[0,t]\times\N^*} 
	  1_{\{j\leq N_{\umoins}\}} \, 
	  \bigl[
		F(S_u,  \crochet{\nu_{\umoins}-\delta_{x_\umoins^j}, f})
	    -
	    F\left(S_u, \, \left<\nu_{\umoins}, f\right>\right) 
	  \bigr] \, N_2(\dif u, \dif j).
\end{align}
\end{lemma}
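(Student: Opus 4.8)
The plan is to derive \eqref{eq.F.f} from the integral equation \eqref{eq.Phi.S.nu} by specializing the test function to $\Phi(s,\nu) = F(s,\langle\nu,f\rangle)$ and then separating the ``between-jumps'' contribution from the ``jump'' contributions. The two jump integrals in \eqref{eq.Phi.S.nu} already have exactly the form appearing in \eqref{eq.F.f}, except that in \eqref{eq.Phi.S.nu} the integrands involve the flow $A_{t-u}$ propagated all the way to time $t$, whereas in \eqref{eq.F.f} they are evaluated at the jump time $u$ itself (i.e.\ with $A_0=\mathrm{Id}$). So the real work is twofold: (a) show that the deterministic term $\Phi(A_t(S_0,\nu_0))$ together with the ``extra flow propagation'' hidden inside each jump term reorganizes into the initial value $F(S_0,\langle\nu_0,f\rangle)$ plus the Lebesgue integral $\int_0^t[\cdots]\,\dif u$; and (b) check that, once this reorganization is done, the jump integrands collapse to their stated form.

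Concretely, first I would note that between jumps the pair $(S_t,\nu_t)$ follows the flow $A_t$, so on any interjump interval $t\mapsto \Phi(S_t,\nu_t) = F(A_t^0(S_u,\nu_u),\langle \sum_i \delta_{A_t^i(S_u,\nu_u)}, f\rangle)$ is $C^1$ in $t$, and by the chain rule together with the ODEs \eqref{eq.substrat}--\eqref{eq.masses} defining $A_t$,
\begin{align*}
  \frac{\dif}{\dif t}\,F\bigl(S_t,\langle\nu_t,f\rangle\bigr)
  =
  \rhos(S_t,\nu_t)\,\partial_s F\bigl(S_t,\langle\nu_t,f\rangle\bigr)
  +
  \langle\nu_t,\rhog(S_t,\cdot)\,f'\rangle\,\partial_x F\bigl(S_t,\langle\nu_t,f\rangle\bigr)\,,
\end{align*}
using $\frac{\dif}{\dif t}\langle\nu_t,f\rangle = \sum_i \rhog(S_t,x_t^i)\,f'(x_t^i) = \langle\nu_t,\rhog(S_t,\cdot)f'\rangle$. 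This is where the hypotheses \eqref{hyp.rhog.lipschitz}--\eqref{hyp.rhog.nulle.en.x=mmax} guaranteeing well-posedness of the flow are used. Then I would apply the fundamental theorem of calculus between consecutive jump times $0 = u_0 < u_1 < \cdots < u_m \le t < u_{m+1}$ (finitely many, by the domination by a linear birth-death process noted after \eqref{eq.dyn.saut.flot}): summing the increments over interjump intervals gives $\int_0^t[\cdots]\dif u$, and collecting the jump discontinuities $F(S_{u_k},\langle\nu_{u_k},f\rangle) - F(S_{u_k},\langle\nu_{u_k^-},f\rangle)$ gives exactly the terms now evaluated at the jump time.

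The final step is to rewrite the sum over jump times as integrals against the Poisson random measures $N_1$ and $N_2$: a division jump at time $u$ affecting individual $j$ with ratio $\alpha$ occurs precisely when an atom $(u,j,\alpha,\theta)$ of $N_1$ satisfies $j\le N_{u^-}$ and $\theta \le \lambda(S_u,x_{u^-}^j)/\lambdab$ (the acceptance-rejection indicator), contributing $F(S_u,\langle\nu_{u^-}-\delta_{x_{u^-}^j}+\delta_{\alpha x_{u^-}^j}+\delta_{(1-\alpha)x_{u^-}^j},f\rangle) - F(S_u,\langle\nu_{u^-},f\rangle)$; similarly an up-take jump corresponds to an atom $(u,j)$ of $N_2$ with $j\le N_{u^-}$. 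This matches the last two integrals in \eqref{eq.F.f} verbatim. I expect the main obstacle to be purely bookkeeping: carefully justifying the interchange between ``propagate each jump's effect forward by the flow, then look at time $t$'' (the form in \eqref{eq.Phi.S.nu}) and ``accumulate instantaneous jumps plus an absolutely continuous drift'' (the form in \eqref{eq.F.f}) — i.e.\ the semigroup/cocycle identity $A_{t-u} = A_{t-v}\circ A_{v-u}$ for the flow, combined with an induction on the number of jumps in $[0,t]$. No delicate estimates are needed here since everything is finite on bounded time intervals; the integrability needed to write the Poisson-measure integrals is automatic from the linear bound on $N_t$.
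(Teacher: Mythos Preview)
Your approach is correct but differs from the paper's. You propose a direct pathwise argument: between jump times the process follows the deterministic flow, so $t\mapsto F(S_t,\langle\nu_t,f\rangle)$ is $C^1$ there with the stated derivative, and the fundamental theorem of calculus on each interjump interval plus the (finitely many) jump increments yields \eqref{eq.F.f} immediately. The paper instead proceeds in two stages: it first applies \eqref{eq.Phi.S.nu} with $\Phi(s,\nu)=\langle\nu,f\rangle$ (not with the full $F$), expands each $f(A^i_{t-u}(\cdot))$ via the chain rule, and uses a Fubini argument to recognize the resulting ``flow-propagated'' remainder terms $T_0+T_1+T_2$ as $\int_0^t\langle\nu_\tau,\rhog(S_\tau,\cdot)f'\rangle\,\dif\tau$ --- this gives an explicit semimartingale equation for $\langle\nu_t,f\rangle$ alone. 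Only then does it lift to $F(S_t,\langle\nu_t,f\rangle)$ by invoking the It\^o formula for Poisson random measures \citep{rudiger2006a}.

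Your route is more elementary (no It\^o formula is needed, only ordinary calculus and the finiteness of jumps), and it handles $F$ and $f$ simultaneously. The paper's route has the by-product of isolating the equation for $\langle\nu_t,f\rangle$ on its own, which is exactly what is reused later (e.g.\ in the corollary giving \eqref{renormalisation}); its Fubini step also makes explicit how the flow-propagated form \eqref{eq.Phi.S.nu} converts into the instantaneous-jump form, which you identify as the ``main obstacle'' but then bypass by arguing directly from the piecewise-deterministic structure rather than from \eqref{eq.Phi.S.nu}.
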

%------------------------------------------

%------------------------------------------
\begin{proof}
From \eqref{eq.Phi.S.nu}:
\begin{align*}
  &
  \crochet{\nu_t, f}
  = 
  \sum_{i=1}^{N_0} f(A_t^i(S_0,\nu_0))
\\
  &\quad
  + \iiiint\limits_{[0,t]\times\N^*\times[0,1]^2}
     1_{\{j\leq N_{\umoins}\}} \, 
     1_{\{0\leq \theta \leq \lambda(S_{u},x_{\umoins}^j)/\lambdab\}}\,
\\[-0.5em]
	& \qquad\qquad\qquad
     \times\Bigl[
   	    \sum_{i=1}^{N_{\umoins}+1}
		 	f(A_{t-u}^i(S_u,
		 		\nu_{\umoins}-\delta_{x_{\umoins}^j}
				+\delta_{\alpha\,x_{\umoins}^j}
		 	    +\delta_{(1-\alpha)\,x_{\umoins}^j}))
\\[-1em]
	& \qquad\qquad\qquad\qquad\qquad\qquad\qquad\qquad
		-\sum_{i=1}^{N_{\umoins}}
		 	f(A_{t-u}^i(S_u,\nu_{\umoins}))
    \Bigl] \,
    N_1(\dif u, \dif j, \dif \alpha, \dif \theta) 
\\
  &\quad
  + \iint\limits_{[0,t]\times\N^*} 
		1_{\{j\leq N_{\umoins}\}} \, 
		\Bigl[
		  \sum_{i=1}^{N_{\umoins}-1}
		 	    f(A_{t-u}^i(S_u,
		 			\nu_{\umoins}-\delta_{x_{\umoins}^j}))
		  -
		  \sum_{i=1}^{N_{\umoins}} f(A_{t-u}^i(S_u,\nu_{\umoins})) 
		\Bigl] \, N_2(\dif u, \dif j).
\end{align*}
According to the chain rule formula, for any  $\nu=\sum_{i=1}^{N}\delta_{x^i}$:
\begin{align*} 
  f(A_{t-u}^i(s,\nu))
  & = 
  f(x^i)
  +
  \int_u^t 	\rhog(A_{\tau-u}^0(s,\nu),A_{\tau-u}^i(s,\nu)) \, f'(A_{\tau-u}^i(s,\nu))
		 \,\dif \tau
\\
  & = 
  f(x^i)
  +
  \int_u^t \varphi(A_{\tau-u}^0(s,\nu),A_{\tau-u}^i(s,\nu)) \,\dif \tau
\end{align*}
for $i \leq N$, with:
\begin{align*}
  \varphi(s,x) \eqdef \rhog(s,x) \, f'(x)\,.
\end{align*}
Hence:
\begin{align*}
  &\crochet{\nu_t, f}
  = 
  \crochet{\nu_0, f} 
\\
  & \qquad 
  + \iiiint\limits_{[0,t]\times\N^*\times[0,1]^2}
		1_{\{j\leq N_\umoins\}} \, 
		1_{\{0\leq \theta \leq \lambda(S_{u},x_\umoins^j)/\lambdab\}}
\\[-0.8em]
  &\qquad\qquad\qquad\qquad\qquad
  \times  \,
			 \left[
			    f(\alpha \, x_\umoins^j)
			    + f((1-\alpha) \, x_\umoins^j)
				- f(x_\umoins^j)
		     \right] \, 
  N_1(\dif u, \dif j, \dif \alpha, \dif \theta) 
\\
  &\qquad  
  - \iint\limits_{[0,t]\times\N^*}  
			1_{\{j\leq N_\umoins\}} \, f(x_\umoins^j) \,
			 \,	N_2(\dif u, \dif j)
  +T_0+T_1+T_2
\end{align*}
where:
\begin{align*}
  T_0 
  &\eqdef
  \sum_{i=1}^{N_0} \int_0^t \varphi(A_\tau^0(S_0,\nu_0), A_\tau^i(S_0,\nu_0)) \, \rmd\tau	
\\[1em]
  T_1
  &\eqdef 
  \iiiint\limits_{[0,t]\times\N^*\times[0,1]^2}
    1_{\{j\leq N_\umoins\}} \,
    1_{\{0\leq \theta \leq \lambda(S_{u},x_\umoins^j)/\lambdab\}} \, 
\\[-0.7em]
  &\qquad\qquad\qquad
  \times\int_u^t \Bigl[
	 \sum_{i=1}^{N_\umoins+1}
		 	\varphi(A_{\tau-u}^0(S_u,
		 			\nu_\umoins-\delta_{x_\umoins^j}
					+\delta_{\alpha \, x_\umoins^j}
		 			+\delta_{(1-\alpha) \, x_\umoins^j}),
\\[-1em]
  &\qquad\qquad\qquad\qquad\qquad\qquad\qquad\qquad 
		       A_{\tau-u}^i(S_u,\nu_\umoins
		           -\delta_{x_\umoins^j}
		           +\delta_{\alpha \, x_\umoins^j}
		 		   +\delta_{(1-\alpha) \, x_\umoins^j}))
\\[-0.6em]
  &\qquad\qquad\qquad\qquad\qquad\qquad
	 -\sum_{i=1}^{N_\umoins}
		 \varphi(A_{\tau-u}^0(S_u,\nu_\umoins),A_{\tau-u}^i(S_u,\nu_\umoins))
			 \Bigr]\,\dif \tau 	
\\
	&\qquad\qquad\qquad
	\times N_1(\dif u, \dif j, \dif \alpha, \dif \theta)
\\[1em]
  T_2
  &\eqdef  
  \iint\limits_{[0,t]\times\N^*} 1_{\{j\leq N_\umoins\}}
  \; \int_u^t
	\Bigl[ \sum_{i=1}^{N_\umoins-1}
		 	\varphi(A_{\tau-u}^0(S_u,
		 			\nu_\umoins-\delta_{x_\umoins^j}), A_{\tau-u}^i(S_u,
		 			\nu_\umoins-\delta_{x_\umoins^j}))
\\[-0.6em]
  &\qquad\qquad\qquad\qquad\qquad\qquad
			-\sum_{i=1}^{N_\umoins}
		 	\varphi(A_{\tau-u}^0(S_u,\nu_\umoins),A_{\tau-u}^i(S_u,\nu_\umoins)) 
			 \Bigr] \, \dif \tau
  \;  N_2(\dif u, \dif j).
\end{align*}
Fubini's theorem applied to $T_1$ and $T_2$ leads to:
\begin{align*}
  T_1
  &= 
  \int_0^t \iiiint\limits_{[0,\tau]\times\N^*\times[0,1]^2}
   1_{\{j\leq N_\umoins\}} \, 
   \times 1_{\{0\leq \theta \leq \lambda(S_{u},x_\umoins^j)/\lambdab\}} \, 
\\[-0.7em]
  &\qquad\qquad\qquad
  \times \Bigl[
		\sum_{i=1}^{N_\umoins+1}
		 	\varphi(A_{\tau-u}^0(S_u,
		 			\nu_\umoins
					-\delta_{x_\umoins^j}
					+\delta_{\alpha \, x_\umoins^j}
		 			+\delta_{(1-\alpha) \, x_\umoins^j}),
        \\[-1em]
        &\qquad\qquad\qquad\qquad\qquad\qquad\qquad\qquad
        A_{\tau-u}^i(S_u,
             \nu_\umoins-\delta_{x_\umoins^j}
                +\delta_{\alpha \, x_\umoins^j}
		 		+\delta_{(1-\alpha) \, x_\umoins^j}))
\\[-0.7em]
  &\qquad\qquad\qquad\qquad\qquad
		-\sum_{i=1}^{N_\umoins}
		 	\varphi(A_{\tau-u}^0(S_u,\nu_\umoins),A_{\tau-u}^i(S_u,\nu_\umoins))
  \Bigr] 
\\
  &\qquad\qquad\qquad
  \times N_1(\dif u, \dif j, \dif \alpha, \dif \theta) \;\dif \tau
\\[1em]
  T_2
  &\eqdef
  \int_0^t \iint\limits_{[0,\tau]\times\N^*} 
  1_{\{j\leq N_\umoins\}} \, 
  \Bigl[
		\sum_{i=1}^{N_\umoins-1}
		 	\varphi(A_{\tau-u}^0(S_u,
		 			\nu_\umoins-\delta_{x_\umoins^j}), A_{\tau-u}^i(S_u,
		 			\nu_\umoins-\delta_{x_\umoins^j}))
    \\[-1em]
	& \qquad\qquad\qquad\qquad\qquad\qquad 
		-\sum_{i=1}^{N_\umoins}
		 	\varphi(A_{\tau-u}^0(S_u,\nu_\umoins),A_{\tau-u}^i(S_u,\nu_\umoins))
   \Bigr] 
   \,	N_2(\dif u, \dif j)\; \dif \tau
\end{align*}
so, according to \eqref{eq.Phi.S.nu}:
\begin{align*}
  T_0+T_1+T_2
  &= 
  \int_0^t \crochet{\nu_\tau,\varphi(S_\tau,.)} \, \dif\tau \,.		 
\end{align*}
Finally,
\begin{align*}
  \crochet{\nu_t,f}
  &=
  \crochet{\nu_0, f} 
  + 
  \int_0^t \crochet{\nu_u,\rhog(S_u,.)\,f'} \, \dif u 
\\
  &\quad  
  + \iiiint\limits_{[0,t]\times\N^*\times[0,1]^2} 
	1_{\{j\leq N_\umoins\}} \, 
	1_{\{0\leq \theta \leq \lambda(S_{u},x_\umoins^j)/\lambdab\}}\,
	\bigl[f(\alpha \,x_\umoins^j)+f((1-\alpha)\,x_\umoins^j)
				-f(x_\umoins^j)\bigr] 
\\[-1em]
  &\qquad\qquad\qquad\qquad\qquad\qquad\qquad\qquad\qquad
  \times N_1(\dif u, \dif j, \dif \alpha, \dif \theta) 
\\
  &\quad
  - \iint\limits_{[0,t]\times\N^*}  
			1_{\{j\leq N_\umoins\}} \, f(x_\umoins^j) \,
			 \,	N_2(\dif u, \dif j).
\end{align*}

Since $f$ and $f'$ are continuous and bounded (bounded as defined on a compact set), we can conclude this proof by using the Itô formula for stochastic integrals with respect to Poisson random measures \citep{rudiger2006a} to develop the differential of $F(S_{t},\crochet{\nu_t,f})$ using Equation \eqref{eq.substrat} and the previous equation.
\carre
\end{proof}
%------------------------------------------

\bigskip

Consider the compensated Poisson random  measures associated with $N_1$ and $N_2$:
\begin{align*}
  \tilde N_1(\dif u, \dif j, \dif y, \dif \theta)
  &\eqdef
  N_1(\dif u, \dif j, \dif y, \dif \theta)
  -
  n_1(\dif u, \dif j, \dif y, \dif \theta)\,,
\\
 \tilde N_2(\dif u, \dif j)
  &\eqdef
  N_2(\dif u, \dif j)-n_2(\dif u, \dif j)\,.
\end{align*}

\medskip

As the integrands in the Poissonian integrals of \eqref{eq.F.f} are predictable, one can make use of the result of \citet[p. 62]{ikeda1981a}:
%------------------------------------------
\begin{proposition}
\label{prop.martingale}
Let:
\begin{align*}
  M^1_t
  &\eqdef 
  \iiiint\limits_{[0,t]\times\N^*\times[0,1]^2} 
     1_{\{j\leq N_\umoins\}} \, 
     1_{\{0\leq \theta \leq \lambda(S_{u},x_\umoins^j)/\lambdab\}} 
\\[-1em]
  &\qquad\qquad\qquad\qquad
    \times
    \left[
	 F\left(S_u,\left<\nu_\umoins-\delta_{x_\umoins^j}
			 +\delta_{\alpha \, x_\umoins^j}+\delta_{(1-\alpha)\, x_\umoins^j}, f\right> \right)
				-F\left(S_u, \, \left<\nu_\umoins, f\right>\right) 
	 \right]
\\
  &\qquad\qquad\qquad\qquad
  \times \tilde N_1(\dif u, \dif j, \dif \alpha, \dif \theta)\,,
\\
  M^2_t
  &\eqdef 
  \iint\limits_{[0,t]\times\N^*}
	 1_{\{j\leq N_\umoins\}} \, 
	 \left[
	  F\left(S_u, \left<\nu_\umoins-\delta_{x_\umoins^j}, f\right>\right)
	  -
	  F\left(S_u, \, \left<\nu_\umoins, f\right>\right) 
	 \right] \,	\tilde N_2(\dif u, \dif j)\,.
\end{align*}
We have the following properties of martingales:
\begin{enumerate}

%------------------
\item if for any $t\geq 0$:
\begin{align*}
  &\EE\Bigl(
    \int_0^t \int_{\X} \lambda(S_{u}, x) \int_0^1
	 \bigl|
         F(S_u,\crochet{\nu_u-\delta_x+\delta_{\alpha \, x}
                +\delta_{(1-\alpha)\,x},f})
\\
  &\qquad\qquad\qquad\qquad\qquad\qquad\qquad
  -F(S_u,\crochet{\nu_u,f})
	\bigr| \, Q(\dif \alpha)\, \nu_u(\dif x)\,\dif u
  \Bigr) < + \infty
\end{align*}
then  $(M^1_t)_{t\geq 0}$ is a martingale;
\item if for any $t\geq 0$
\begin{align*}
  \EE\Bigl(
    \int_0^t \int_{\X} 
	 \bigl|
		F(S_u,\crochet{\nu_u-\delta_x,f})
		-
		F(S_u,\crochet{\nu_u,f})
	 \bigr|
			\, \nu_u(\dif x)\,\dif u
	\Bigr) < +\infty
\end{align*}
then  $(M^2_{t})_{t\geq 0}$ is a martingale;

%------------------
\item if for any $t\geq 0$
\begin{align*}
  &\EE\Bigl(
    \int_0^t \int_{\X} \lambda(S_{u}, x) \int_0^1 
	  \bigl|
	     F(S_u,\crochet{
	        \nu_u-\delta_x+\delta_{\alpha \, x}+\delta_{(1-\alpha)\,x},f
	        })
      \\ 
      &  \qquad\qquad\qquad\qquad\qquad\qquad\qquad
		 -
		 F(S_u,\crochet{\nu_u,f})
	  \bigr|^2 \,
	  Q(\dif \alpha)\,\nu_u(\dif x)\,\dif u
  \Bigr)
  < + \infty
\end{align*}
then  $(M^1_{t})_{t\geq 0}$ 
is a square integrable martingale and predictable quadratic variation:
\begin{align*}
  &\crochet{M^1}_t	
  \eqdef
  \int_0^t \int_{\X} \lambda(S_{u}, x) \, \int_0^1 
    \bigl[
		F(S_u,
		  \crochet{
		    \nu_u-\delta_x+\delta_{\alpha \, x}+\delta_{(1-\alpha)\,x}
		    ,f})
      \\ 
      &  \qquad\qquad\qquad\qquad\qquad\qquad\qquad
		-
		F(S_u,\crochet{\nu_u,f})
	\bigr]^2 \,
	Q(\dif \alpha)\, \nu_u(\dif x)\,\dif u\,;
\end{align*}

%------------------
\item if for any $t\geq 0$
\begin{align*}
  \EE\Bigl(\int_0^t \int_{\X} 
     \left|
	    F(S_u,\crochet{\nu_u-\delta_x,f})
		-
		F(S_u,\crochet{\nu_u,f})
	\right|^2 \, \nu_u(\dif x)\,\dif u
  \Bigr) < + \infty
\end{align*}
then  $(M^2_{t})_{t\geq 0}$ 
is a square integrable martingale and predictable quadratic variation:
\begin{align*}
  \left<M^2\right>_t	
  \eqdef
  D \, \int_0^t \int_{\X} 
  \left[
    F(S_u,\crochet{\nu_u-\delta_x,f})
	-
	F(S_u,\crochet{\nu_u,f})
  \right]^2 \, 
  \nu_u(\dif x)\,\dif u\,.
\end{align*}
\end{enumerate}
\end{proposition}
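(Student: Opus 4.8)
The plan is to read everything off from the standard calculus of stochastic integrals against compensated Poisson random measures, as in \citet[p.~62]{ikeda1981a}: if $N$ is a Poisson random measure with deterministic $\sigma$-finite intensity $n$ and $H$ is a predictable integrand, then $t\mapsto\int_{[0,t]\times\cdots}H\,\dif\tilde N$ is a local martingale; it is a (true) martingale as soon as $\EE\int_{[0,t]\times\cdots}\abs{H}\,\dif n<\infty$ for every $t\geq0$, and a square-integrable martingale with predictable quadratic variation $\int_{[0,t]\times\cdots}H^2\,\dif n$ as soon as $\EE\int_{[0,t]\times\cdots}H^2\,\dif n<\infty$ for every $t\geq0$. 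Thus the proof amounts to two tasks: checking that the integrands of $M^1_t$ and $M^2_t$ are predictable, and computing explicitly the compensator integrals $\int\abs{H}\,\dif n_\ell$ and $\int H^2\,\dif n_\ell$ for $\ell=1,2$.

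For predictability I would start from the fact that the process $(S_t,\nu_t)_{t\geq0}$ built in Section~\ref{sec.processus.microscopique} is c\`adl\`ag, $(\F_t)$-adapted, and has only finitely many jumps on every bounded interval — being dominated by the linear birth--death envelope mentioned after \eqref{eq.dyn.saut.flot} — so that all the pathwise expressions below are well defined. Hence $u\mapsto N_{\umoins}$ and, on $\{j\le N_{\umoins}\}$, $u\mapsto x^j_{\umoins}$ are left-continuous and adapted, therefore predictable, while $u\mapsto S_u$ is continuous and adapted. The indicators $1_{\{j\le N_{\umoins}\}}$ and $1_{\{0\le\theta\le\lambda(S_u,x^j_{\umoins})/\lambdab\}}$ and the increments of $F$ are then predictable, being obtained from predictable processes through products and composition with the continuous maps $\lambda$, $f$, $F$; so the full integrands of $M^1$ and $M^2$ are predictable, and the cited theorem applies.

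Then I would perform the compensator computations. For $M^1$ the intensity is $n_1(\dif u,\dif j,\dif\alpha,\dif\theta)=\lambdab\,\dif u\,\bigl(\sum_{k\ge1}\delta_k(\dif j)\bigr)\,Q(\dif\alpha)\,\dif\theta$. Integrating the modulus, resp. the square, of the $M^1$-integrand $H$ against $n_1$, the $\theta$-integral of $1_{\{0\le\theta\le\lambda(S_u,x^j_{\umoins})/\lambdab\}}$ over $[0,1]$ contributes the factor $\lambda(S_u,x^j_{\umoins})/\lambdab$ (the indicator being idempotent, the square case is the same), the prefactor $\lambdab$ cancels it, and the $j$-sum against $1_{\{j\le N_{\umoins}\}}$ collapses via
\[
  \sum_{j\geq1}1_{\{j\le N_{\umoins}\}}\,g(x^j_{\umoins})
  =\sum_{j=1}^{N_{\umoins}}g(x^j_{\umoins})
  =\crochet{\nu_{\umoins},g}
  =\int_{\X}g(x)\,\nu_{\umoins}(\dif x)\,,
\]
after which $\nu_{\umoins}$ may be replaced by $\nu_u$ under the $\dif u$-integral since they coincide for all but countably many $u$. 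This identifies $\EE\int_0^t\abs{H}\,\dif n_1$ with exactly the quantity assumed finite in item~(1), and $\EE\int_0^t H^2\,\dif n_1$ with the quantity assumed finite in item~(3), and it shows that $\int_0^t H^2\,\dif n_1$ equals the claimed expression for $\crochet{M^1}_t$. The computation for $M^2$ is the same but lighter: with $n_2(\dif u,\dif j)=D\,\dif u\,\bigl(\sum_{k\ge1}\delta_k(\dif j)\bigr)$ the $j$-sum again collapses to $\int_{\X}(\cdots)\,\nu_u(\dif x)$, giving $\EE\int_0^t\abs{H}\,\dif n_2=D\,\EE\int_0^t\int_{\X}\abs{F(S_u,\crochet{\nu_u-\delta_x,f})-F(S_u,\crochet{\nu_u,f})}\,\nu_u(\dif x)\,\dif u$ and likewise for the square; the constant $D$ being harmless for finiteness, items~(2) and (4) follow, together with the stated formula for $\crochet{M^2}_t$.

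I expect the argument to be essentially bookkeeping, the only genuinely delicate points being (a) the predictability verification — in particular that $u\mapsto x^j_{\umoins}$ is predictable on $\{j\le N_{\umoins}\}$, which rests on the explicit pathwise construction and the local finiteness of the jump set — and (b) invoking the cited result in the right register (plain martingale under the $L^1$ bound; square-integrable martingale with the exhibited angle bracket under the $L^2$ bound). Should a self-contained version be preferred, the same conclusions follow by localising along the jump times of the birth--death envelope, on which $N_t$, hence every integrand, is bounded, and then passing to the limit by monotone convergence.
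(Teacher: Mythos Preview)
Your proposal is correct and follows exactly the same approach as the paper: the paper simply states, immediately before the proposition, that ``as the integrands in the Poissonian integrals of \eqref{eq.F.f} are predictable, one can make use of the result of \citet[p.~62]{ikeda1981a}'' and offers no further argument. Your write-up fleshes out precisely this citation --- the predictability check and the explicit reduction of the $n_\ell$-integrals to the stated hypotheses --- so it is a faithful expansion of what the paper leaves implicit.
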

%------------------------------------------

\bigskip

%------------------------------------------
\begin{lemma}[Control of the population size]
\label{lem.taille.pop}
Let $T>0$, if there exists $p\geq 1$ such that $\EE(\crochet{\nu_0,1}^p) <\infty$, then:
\begin{align*}
  \EE\left( 
    \sup_{t \in [0,T]} \left< \nu_t,1 \right>^p 
  \right) 
  \leq C_{p,T}
\end{align*}
where $C_{p,T}<\infty $ depends only on $p$ and $T$.
\end{lemma}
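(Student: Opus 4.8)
\emph{Proof idea.} The plan is to bound the population size $N_t\eqdef\crochet{\nu_t,1}$ pathwise from above by a Yule (linear pure‑birth) process built on the same probability space, and then to read off moments from the explicit law of the latter. The key observation is that the continuous part of the dynamics (individual growth and the evolution of $S$) leaves $N_t$ unchanged, so $N_t$ moves only by the atoms of $N_1$ (a division, $+1$) and of $N_2$ (an up‑take, $-1$). An up‑take can only decrease $N_t$, and the acceptance indicator $1_{\{\theta\le\lambda(S_u,x^j_\umoins)/\lambdab\}}$ can only suppress a division, so discarding $N_2$ and this indicator can only increase the process.

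Concretely, I would construct on the same space the pure‑birth process $(Y_t)_{t\ge0}$ with $Y_0=N_0$ that jumps by $+1$ at every atom $(u,j,\alpha,\theta)$ of $N_1$ such that $j\le Y_\umoins$ (ignoring $\alpha$ and $\theta$). Since $N_1$ has intensity $\lambdab\,\rmd u\,(\sum_k\delta_k(\rmd j))\,Q(\rmd\alpha)\,\rmd\theta$, the process $Y$ is exactly a linear birth process with per‑capita rate $\lambdab$. One then checks, processing the atoms of $N_1$ and $N_2$ one at a time (they are a.s.\ distinct, and nothing changes between consecutive atoms), that $N_t\le Y_t$ for all $t$: the inequality is preserved at an up‑take (only $N$ decreases), at a rejected or out‑of‑range division (only $Y$ may increase), and at an accepted division at index $j$ (then $j\le N_\umoins\le Y_\umoins$, so both increase by $1$). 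Because $Y$ is nondecreasing, $\sup_{t\in[0,T]}N_t\le Y_T$, hence $\sup_{t\in[0,T]}\crochet{\nu_t,1}^p\le Y_T^p$.

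It remains to bound $\EE(Y_T^p)$. Conditionally on $N_0$, the Yule process started from $N_0$ individuals is the sum of $N_0$ i.i.d.\ copies $Y^{(1)},\dots,Y^{(N_0)}$ of a Yule process started from a single individual, and $Y^{(1)}_T$ has the geometric law with parameter $e^{-\lambdab T}$, so $c_{p,T}\eqdef\EE\big((Y^{(1)}_T)^p\big)=\sum_{k\ge1}k^p\,e^{-\lambdab T}(1-e^{-\lambdab T})^{k-1}<\infty$. By Minkowski's inequality in $L^p$ (valid since $p\ge1$), $\EE(Y_T^p\mid N_0)\le N_0^p\,c_{p,T}$, and therefore
\[
  \EE\Big(\sup_{t\in[0,T]}\crochet{\nu_t,1}^p\Big)\ \le\ \EE(Y_T^p)\ \le\ c_{p,T}\,\EE\big(\crochet{\nu_0,1}^p\big)\ \eqdef\ C_{p,T}\ <\ \infty,
\]
using the hypothesis $\EE(\crochet{\nu_0,1}^p)<\infty$ (so $C_{p,T}$ depends on $p$, $T$ and $\EE(\crochet{\nu_0,1}^p)$). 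As a by‑product $Y_T<\infty$ a.s., which also confirms that $(\nu_t)$ has only finitely many jumps on $[0,T]$.

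I expect the only delicate point to be the rigorous verification of the pathwise domination $N_t\le Y_t$ (constructing $Y$ from the same Poisson measure and tracking the three jump cases), though this is routine. An alternative, purely analytic route would be to apply Lemma~\ref{lem.gen.inf} with $F(s,x)=x^p$ and $f\equiv1$ (the two continuous terms vanish), localize with $\tau_n\eqdef\inf\{t:N_t\ge n\}$ so that Proposition~\ref{prop.martingale} applies, bound the compensator of the division part via $\crochet{\nu_u,\lambda(S_u,\cdot)}\le\lambdab N_u$ and $(N_u+1)^p-N_u^p\le C_p(1+N_u^{p-1})$ while discarding the nonpositive up‑take compensator, close the inequality $\EE(N_{t\wedge\tau_n}^p)\le\EE(N_0^p)+C'_p\lambdab\int_0^t(1+\EE(N_{u\wedge\tau_n}^p))\,\rmd u$ by Grönwall and pass to the limit by Fatou ($\tau_n\uparrow\infty$), and finally handle the supremum with Doob's inequality on the martingale part, using the predictable quadratic variations of Proposition~\ref{prop.martingale} and the moment bounds already obtained; there the subtlety is precisely the localization and the justification that $\tau_n\uparrow\infty$, which the coupling argument bypasses.
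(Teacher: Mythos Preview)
Your coupling argument is correct and genuinely different from the paper's route. The paper proves the lemma by the analytic approach you sketch as an alternative: it applies Lemma~\ref{lem.gen.inf} with $F(s,x)=x^p$, $f\equiv1$, localizes with $\tau_n=\inf\{t:N_t\ge n\}$, drops the nonpositive up-take integral, uses $(1+y)^p-y^p\le C_p(1+y^{p-1})$ on the division integral, takes expectations via Proposition~\ref{prop.martingale}, closes with Gronwall, shows $\tau_n\to\infty$, and concludes by Fatou. One small difference with your sketch: the paper does not invoke Doob for the supremum; it simply observes that after discarding the up-take term the remaining Poisson integral has nonnegative integrand, hence is nondecreasing in $t$, so the bound on $\crochet{\nu_t,1}^p$ immediately upgrades to a bound on $\sup_{u\le t}\crochet{\nu_u,1}^p$.

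Your pathwise domination by a Yule process is more elementary: it avoids localization, Gronwall and Fatou altogether, and produces the explicit constant $C_{p,T}=c_{p,T}\,\EE(\crochet{\nu_0,1}^p)$ with $c_{p,T}$ the $p$-th moment of a geometric law of parameter $e^{-\lambdab T}$. The trade-off is that it relies crucially on the per-capita birth rate being bounded by $\lambdab$ (so that the true process is dominated by a \emph{linear} birth process with explicit moments), whereas the Gronwall route is the standard template in the measure-valued process literature and is what the paper reuses verbatim for the rescaled process in Lemma~\ref{lem.esp}.
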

%------------------------------------------

%------------------------------------------
\begin{proof}
For any $n \in \NN$, define the following stopping time:
\begin{align*}
  \tau_n \eqdef \inf \{t \geq 0, \, N_t \geq n \}\,.
\end{align*}
Lemma \ref{lem.gen.inf} applied to $F(s,x)=x^p$ and $f(x)=1$ leads to:
\begin{multline*}
  \sup_{u\in [0,t \wedge \tau_n]} \crochet{\nu_u,1}^p
  \leq 
  \crochet{\nu_0,1}^p
\\
  + \iiiint\limits_{[0,t\wedge \tau_n]\times\N^*\times[0,1]^2} 
	 	1_{\{j\leq N_\umoins\}}\, 
		1_{\{0\leq \theta \leq \lambda(S_{u}, x_\umoins^j)/ \lambdab\}}\,
	 	\bigl[ 
		  (\crochet{ \nu_\umoins,1}+1)^p-\crochet{\nu_\umoins,1}^p 
		\bigr] 
\\[-1em]
	\times N_1(\dif u, \dif j,\dif \alpha,  \dif \theta)\,.
\end{multline*}
From inequality  $(1+y)^p-y^p\leq C_p\,(1+y^{p-1})$ we get:
\begin{multline*}
  \sup_{u\in [0,t \wedge \tau_n]} \crochet{\nu_u,1}^p
  \leq  
  \crochet{\nu_0,1}^p 
\\
  + C_p\,\iiiint\limits_{[0,t\wedge \tau_n]\times\N^*\times[0,1]^2} 
  1_{\{j\leq N_\umoins\}}\,
  1_{\{0\leq \theta \leq \lambda(S_{u}, x_\umoins^j)/ \lambdab\}}\, 
  \bigl[(1+\crochet{\nu_\umoins,1}^{p-1}\bigr] 
\\[-1em]
  \times N_1(\dif u, \dif j,\dif \alpha,  \dif \theta)\,.
\end{multline*}
Proposition \ref{prop.martingale}, together with the inequality  $(1+y^{p-1})\,y \leq 2\,(1+y^p)$ give:
\begin{align*}
  \EE \Biggl(
    \sup_{u\in [0,t \wedge \tau_n]} \crochet{\nu_u,1}^p 
  \Biggr)
  \leq 
  \EE(\crochet{\nu_0,1}^p)
  + 2\,\lambdab\,C_p\,
    \EE\int_0^{t} 
		\bigl( 1+\crochet{ \nu_{u\wedge \tau_n},1}^p\bigr)\,\dif u\,.
\end{align*}
Fubini's theorem and Gronwall's inequality allow us to conclude that for any $T< \infty$:
\begin{align*}
  \EE\Biggl(
    \sup_{t\in [0,T \wedge \tau_n]} \crochet{\nu_t,1}^p 
  \Biggr)
  \leq 
  	\Bigl(\EE\bigl(\crochet{\nu_0,1}^p\bigr) + 2\,\lambdab\,C_p\,T \Bigr)  \,
  	\exp(2\,\lambdab\,C_p\,T)
  \leq
  C_{p,T}
\end{align*}
where $C_{p,T}<\infty$ as  $\EE(\crochet{\nu_0,1}^p)<\infty$.

In addition, the sequence of stopping times $\tau_n$ tends to infinity,  otherwise there would exist $T_0<\infty$ such that $\PP(\sup_n \tau_n <T_0)=\varepsilon_{T_0}>0$ hence $\EE(\sup_{t \in [0,T_0 \wedge \tau_n]} \crochet{\nu_t,1}^p)\geq \varepsilon_{T_0}\,n^p$  which contradicts the above inequality. Finally, Fatou's lemma gives:
\begin{align*} 
  \EE\Biggl( \sup_{t \in [0,T]} \crochet{\nu_t,1}^p \Biggr)
  =
  \EE \Biggl( 
	  \liminf_{n \to \infty} 
	    \sup_{t\in [0,T \wedge \tau_n]} 
	    \crochet{\nu_t,1}^p
  \Biggr)
  \leq 
  \liminf_{n \to \infty}
   \EE \Biggl(
     \sup_{t\in [0,T \wedge \tau_n]} \crochet{\nu_t,1}^p 
   \Biggr)
  \leq 
  C_{p,T}\,.
\end{align*}
\vskip-1em\carre
\end{proof}
%------------------------------------------

\bigskip

%------------------------------------------
\begin{remark}
In particular, if $\EE\crochet{\nu_0,1}<\infty$ and if the function $ F $ is bounded, then by Lemma \ref{lem.taille.pop} and Proposition \ref{prop.martingale}, $(M^1_{t})_{t\geq 0}$ and $(M^2_{t})_{t\geq 0}$ are martingales.
\end{remark}
%------------------------------------------

%------------------------------------------
\begin{lemma}
\label{lem.integrabilite_Sigma}
If $\EE\crochet{\nu_0,1}+\EE(S_0)<\infty$ then:
\begin{align*}
  \EE\biggl(\int_0^t |\rhos (S_u,\nu_u) |\,\dif u \biggr)
  \leq	
  D\, t\,\EE (S_0\vee \Sin)
  + 
  \frac kV \,\bar{g}\;
  \EE\biggl( \int_0^t\crochet{\nu_u,1} \, \dif u\biggr)
  <\infty\,.
\end{align*}
\end{lemma}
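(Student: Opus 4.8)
The idea is to dominate $|\rhos(S_u,\nu_u)|$ pointwise (in $\omega$ and $u$) by an expression that is integrable in expectation, then integrate over $[0,t]$ and apply Tonelli's theorem. Recall $\rhos(s,\nu)=D(\Sin-s)-k\,\mu(s,\nu)$ with $\mu(s,\nu)=\frac1V\int_\X\rhog(s,x)\,\nu(\dif x)\ge 0$ by \eqref{hyp.rhog.borne}. So the two terms must be bounded separately.

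First I would establish the a priori bounds $0\le S_u\le S_0\vee\Sin$ for all $u\ge 0$, which is exactly what produces the constant $D\,t\,\EE(S_0\vee\Sin)$ in the statement. On any inter-jump interval the measure $\nu_u$ is constant, so $\mu(\cdot,\nu_u)$ is a fixed continuous nonnegative function and $\dot S_u=D(\Sin-S_u)-k\,\mu(S_u,\nu_u)\le D(\Sin-S_u)$; a comparison/Gronwall argument with the linear ODE $\dot{\bar S}=D(\Sin-\bar S)$ gives $S_u\le\Sin+(S_0-\Sin)^+\le S_0\vee\Sin$. For nonnegativity, hypothesis \eqref{hyp.rhog.nulle.en.0} gives $\rhog(0,x)=0$, hence $\mu(0,\nu_u)=0$, so $\dot S_u=D\Sin\ge0$ whenever $S_u=0$, and the standard argument yields $S_u\ge0$. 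Both bounds are preserved across jumps since jumps only modify $\nu$, not $S$.

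Given these bounds, $|\Sin-S_u|\le S_0\vee\Sin$ and, by \eqref{hyp.rhog.borne}, $\mu(S_u,\nu_u)\le\frac{\bar g}{V}\crochet{\nu_u,1}$, so
\begin{align*}
  |\rhos(S_u,\nu_u)|
  \le D\,|\Sin-S_u|+k\,\mu(S_u,\nu_u)
  \le D\,(S_0\vee\Sin)+\frac{k}{V}\,\bar g\,\crochet{\nu_u,1}\,.
\end{align*}
Integrating over $[0,t]$ and taking expectations, all integrands being nonnegative so that Tonelli applies, gives the displayed inequality
\begin{align*}
  \EE\Bigl(\int_0^t|\rhos(S_u,\nu_u)|\,\dif u\Bigr)
  \le D\,t\,\EE(S_0\vee\Sin)+\frac{k}{V}\,\bar g\,\EE\Bigl(\int_0^t\crochet{\nu_u,1}\,\dif u\Bigr)\,.
\end{align*}
Finiteness follows from $\EE(S_0\vee\Sin)\le\Sin+\EE(S_0)<\infty$ and from $\EE(\int_0^t\crochet{\nu_u,1}\,\dif u)\le t\,\EE(\sup_{u\in[0,t]}\crochet{\nu_u,1})\le t\,C_{1,t}<\infty$, where the last bound is Lemma \ref{lem.taille.pop} with $p=1$, whose hypothesis $\EE\crochet{\nu_0,1}<\infty$ is part of the assumptions here.

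\textbf{Main obstacle.} The only step that is not purely mechanical is the a priori estimate $0\le S_u\le S_0\vee\Sin$: one must invoke the ODE comparison principle on each inter-jump interval and use $\mu(\cdot,\nu)\ge0$ together with $\rhog(0,\cdot)=0$, then patch across the (locally finite) sequence of jumps. Everything after that is a one-line domination plus Tonelli and a reference to Lemma \ref{lem.taille.pop}.
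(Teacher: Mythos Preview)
Your proof is correct and follows essentially the same route as the paper: establish $0\le S_u\le S_0\vee\Sin$, bound $|\rhos|$ pointwise by $D(S_0\vee\Sin)+\frac{k}{V}\bar g\,\crochet{\nu_u,1}$, integrate, and invoke Lemma~\ref{lem.taille.pop}. One small slip: on an inter-jump interval $\nu_u$ is \emph{not} constant---the individual masses evolve via the growth flow \eqref{eq.masses}---but this is harmless, since the only fact you actually need for the comparison argument is $\mu(S_u,\nu_u)\ge 0$, which follows directly from $\rhog\ge 0$ and yields $\dot S_u\le D(\Sin-S_u)$ regardless of how $\nu_u$ moves.
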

%------------------------------------------

%------------------------------------------
\begin{proof}
As $S_u\geq 0$ and $\rhog$ is a non negative function,
\begin{align*}
  \rhos (S_u,\nu_u)
  \leq 
  D \, \Sin\,.
\end{align*}
Furthermore, for any $(s,x)\in\RR_+ \times \X$,  $\rhog(s,x)\leq \bar g$, and 
$S_u\leq S_0\vee \Sin$ so:
\begin{align*}
  \rhos (S_u,\nu_u)
  \geq 
  -D \, (S_0\vee \Sin)-\frac kV \, \bar{g}\,\crochet{\nu_u,1}\,.
\end{align*}
We therefore deduce that:
\begin{align*}
  \int_0^t |\rhos (S_u,\nu_u) |\, \dif u
  \leq	
  D\,t\,(S_0\vee \Sin)
  + 
  \frac kV \, \bar{g}\, \int_0^t \crochet{\nu_u,1}\, \dif u\,. 
\end{align*}
According to Lemma \ref{lem.taille.pop}, the last term is integrable 
which concludes the proof.
\carre
\end{proof}
%------------------------------------------

\bigskip

%------------------------------------------
\begin{theorem}[Infinitesimal generator]
The process $(S_t,\nu_t)_{t\geq 0}$ is Markovian with values in  $\R_{+}\times\MM(\XX)$ and its infinitesimal generator is:
\begin{multline}
  \L \Phi(s,\nu)
  \eqdef 
  \bigl(D(\Sin-s)-k\,\mu(s,\nu) \bigr)\,
  \partial_s F(s, \crochet{\nu, f}) 		
  +
  \crochet{\nu ,\rhog(s,.) \,  f'} \,
  \partial_x F(s, \crochet{\nu, f}) 
\\
  +  
  \int_{\X} \lambda(s,x) \,
     \int_0^1 			  
	   \bigl[ 
	      F(s,
	         \crochet{\nu-\delta_x+\delta_{\alpha\,x}
		    +
		    \delta_{(1-\alpha)\,x},f}
		  ) 
		  -
		  F(s, \crochet{\nu,f}) 
	   \bigr]			
			  Q(\dif \alpha) \, \nu(\dif x) 
\\
\label{eq.generateur.infinitesimal}
		+ D\,  \int_{\X} 
			 \bigl[
			F(s,\crochet{\nu-\delta_x,f})
			-
			F(s,\left<\nu,f\right>) \bigr]\,
			 \,	\nu(\dif x) 
\end{multline}
for any $\Phi(s,\nu)=F(s,\crochet{\nu,f})$ with  $F \in C_b^{1,1}(\RR^+ \times \RR)$ and $f \in C^{1}(\X)$. Thereafter $\L \Phi(s,\nu)$ is denoted 
$\L F(s,\crochet{\nu,f})$.
\end{theorem}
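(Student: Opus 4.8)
The plan is to read the generator off the semimartingale decomposition of $F(S_t,\crochet{\nu_t,f})$ established in Lemma~\ref{lem.gen.inf}, and to obtain the Markov property from the autonomous, finite-activity jump structure of equation~\eqref{def.proc.S.nu}. Starting from the identity~\eqref{eq.F.f}, I would split each of the two Poisson integrals into its compensated part — the martingales $M^1_t$ and $M^2_t$ of Proposition~\ref{prop.martingale}, obtained by replacing $N_i$ with $\tilde N_i$ — plus the integral against the intensity measure $n_i$. In the $n_1$ contribution, integrating $1_{\{j\leq N_{u^-}\}}$ against $\sum_{k\geq 0}\delta_k(\dif j)$ turns the sum over $j$ into the sum over the $N_{u^-}$ masses of $\nu_{u^-}$, i.e.\ into $\int_{\X}(\cdot)\,\nu_{u^-}(\dif x)$; integrating $1_{\{0\leq\theta\leq\lambda(S_u,x)/\lambdab\}}$ against $\dif\theta$ is legitimate because $\lambda\leq\lambdab$ and produces the factor $\lambda(S_u,x)/\lambdab$, which cancels the $\lambdab$ in $n_1$; and, jump times being countable, $\nu_{u^-}$ may be replaced by $\nu_u$ under the $\dif u$-integral. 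What survives is exactly $\int_0^t\int_{\X}\lambda(S_u,x)\int_0^1[F(S_u,\crochet{\nu_u-\delta_x+\delta_{\alpha x}+\delta_{(1-\alpha)x},f})-F(S_u,\crochet{\nu_u,f})]\,Q(\dif\alpha)\,\nu_u(\dif x)\,\dif u$, the third line of $\L$; the $n_2$ contribution is treated identically and yields $D\int_0^t\int_{\X}[F(S_u,\crochet{\nu_u-\delta_x,f})-F(S_u,\crochet{\nu_u,f})]\,\nu_u(\dif x)\,\dif u$, the fourth line. Adding the drift term already present in~\eqref{eq.F.f}, namely $\int_0^t[\rhos(S_u,\nu_u)\,\partial_sF(S_u,\crochet{\nu_u,f})+\crochet{\nu_u,\rhog(S_u,\cdot)f'}\,\partial_xF(S_u,\crochet{\nu_u,f})]\,\dif u$, and recalling $\rhos(s,\nu)=D(\Sin-s)-k\,\mu(s,\nu)$, one obtains $F(S_t,\crochet{\nu_t,f})=F(S_0,\crochet{\nu_0,f})+\int_0^t\L F(S_u,\crochet{\nu_u,f})\,\dif u+M^1_t+M^2_t$ with $\L$ exactly as in~\eqref{eq.generateur.infinitesimal}; note $\L\Phi(s,\nu)$ is finite for each $(s,\nu)$ since $\nu$ is a finite point measure and $F$, $f'$, $\lambda$ are bounded.

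Next I would verify that this identity is a genuine Dynkin formula, i.e.\ that $M^1$, $M^2$ are martingales and that $\int_0^t|\L F(S_u,\crochet{\nu_u,f})|\,\dif u$ is integrable. For $F\in C_b^{1,1}$ the integrands of $M^1_t$ and $M^2_t$ are dominated by $2\|F\|_\infty\,1_{\{j\leq N_{u^-}\}}$, so (using $\lambda\leq\lambdab$) the $L^1$-conditions of parts~(i) and~(ii) of Proposition~\ref{prop.martingale} both reduce to $\EE\int_0^t\crochet{\nu_u,1}\,\dif u<\infty$, which holds by Lemma~\ref{lem.taille.pop} with $p=1$ whenever $\EE\crochet{\nu_0,1}<\infty$ (in particular when the process starts from a deterministic state) — this is precisely the Remark following Lemma~\ref{lem.taille.pop}. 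Similarly $|\L F(S_u,\crochet{\nu_u,f})|\leq\|\partial_sF\|_\infty\,|\rhos(S_u,\nu_u)|+\bar g\,\|f'\|_\infty\,\|\partial_xF\|_\infty\,\crochet{\nu_u,1}+2\|F\|_\infty(\lambdab+D)\,\crochet{\nu_u,1}$, which is integrable on $[0,t]$ by Lemmas~\ref{lem.taille.pop} and~\ref{lem.integrabilite_Sigma}. Taking conditional expectations given $\F_s$ in the identity above then shows that $F(S_t,\crochet{\nu_t,f})-\int_0^t\L F(S_u,\crochet{\nu_u,f})\,\dif u$ is an $(\F_t)$-martingale for every test function $\Phi(s,\nu)=F(s,\crochet{\nu,f})$ of the stated form.

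It remains to justify that $(S_t,\nu_t)_{t\geq0}$ is a time-homogeneous Markov process on $\R_+\times\M(\X)$, after which the previous step identifies $\L$ as its generator on this class of test functions. The state space is stable: a division replaces $\delta_x$ by $\delta_{\alpha x}+\delta_{(1-\alpha)x}$ with $\alpha x,(1-\alpha)x\in[0,\mmax]=\X$; along the flow $A_t$ masses stay in $[0,\mmax]$ because $\rhog\geq0$ and $\rhog(\cdot,\mmax)=0$ by~\eqref{hyp.rhog.borne} and~\eqref{hyp.rhog.nulle.en.x=mmax}, and $S_t\geq0$ because $\rhos(0,\nu)=D\,\Sin\geq0$ by~\eqref{hyp.rhog.nulle.en.0}. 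Equation~\eqref{def.proc.S.nu} exhibits $(S_t,\nu_t)$ as the pathwise-unique solution of a jump stochastic equation driven by $N_1,N_2$ whose data between jumps (the ODE flow $A_t$, well posed under~\eqref{hyp.rhog.lipschitz}) and at jumps depend only on the current state; the total jump rate at population size $N$ being at most $(\lambdab+D)N$, there is no accumulation of jumps on bounded intervals (domination by a linear birth--death process, as noted after~\eqref{eq.dyn.saut.flot}), so the solution is global and the usual flow/uniqueness argument for such piecewise-deterministic jump processes yields the Markov property. I expect this last point to be the only delicate step: the generator computation is essentially bookkeeping once Lemma~\ref{lem.gen.inf} is available, whereas the Markov property must either be quoted from the theory of piecewise-deterministic Markov processes or derived from the routine but slightly technical observation that, conditionally on $\F_s$, the inputs to~\eqref{def.proc.S.nu} after time $s$ form an independent copy of the same Poisson data — which is exactly what the explicit construction via $N_1$ and $N_2$ makes available.
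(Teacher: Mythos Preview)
Your approach is correct and shares the same core machinery as the paper: both start from Lemma~\ref{lem.gen.inf}, split the Poisson integrals into compensated martingale parts plus their $n_i$-compensators via Proposition~\ref{prop.martingale}, and control the remaining terms through Lemmas~\ref{lem.taille.pop} and~\ref{lem.integrabilite_Sigma}. The difference lies in the final identification of the generator. You establish the full semimartingale decomposition $F(S_t,\crochet{\nu_t,f})=F(S_0,\crochet{\nu_0,f})+\int_0^t\L F(S_u,\crochet{\nu_u,f})\,\dif u+M_t^1+M_t^2$ and read $\L$ off the martingale-problem formulation. The paper instead fixes a deterministic starting point $(s,\nu)$, takes expectations to obtain $\EE\bigl(F(S_t,\crochet{\nu_t,f})\bigr)=F(s,\crochet{\nu,f})+\EE(\Psi(t))$, bounds $\partial_t\Psi(t)|_{t=0}$, and invokes differentiation under the integral sign to compute the pointwise limit $\lim_{t\downarrow 0}t^{-1}\bigl[\EE_{(s,\nu)}\Phi(S_t,\nu_t)-\Phi(s,\nu)\bigr]=\L\Phi(s,\nu)$, i.e.\ the classical definition of the generator. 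Your route yields strictly more information (the full decomposition, valid for all $t$), and avoids the differentiation-under-the-integral step; the paper's route is closer to the textbook definition and slightly shorter once the expectation is taken. Your explicit discussion of the Markov property via the piecewise-deterministic structure of~\eqref{def.proc.S.nu} goes beyond what the paper's proof actually does --- the paper asserts the Markov property in the statement but does not argue it in the proof.
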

%------------------------------------------

%------------------------------------------
\begin{proof}
Consider deterministic initial conditions  $S_{0}=s\in\R_{+}$ and $\nu_{0}=\nu\in\MM(\XX)$. 
According to Lemma \ref{lem.gen.inf}:
\begin{align*}
  &
  \EE\bigl(F(S_t,\crochet{\nu_t, f})\bigr)
  =
  F(s, \, \left<\nu, f\right>)
  + 
  \EE\biggl(\int_0^t 
	   \rhos (S_u,\nu_u) \, 
	   \partial_s F(S_u, \, \crochet{\nu_u, f})\, \dif u 
    \biggr)
\\
  &\ 
  +\EE \biggl(\int_0^t
	   \crochet{ \nu_u , \rhog(S_u,.)\,f'} \;
	   \partial_x F\left(S_u, \, \left<\nu_u, f\right>\right)\,
	   \dif u
   \biggr) 
\\
  &\ 
  + \EE\biggl(\ \ 
	  \iiiint\limits_{[0,t]\times\N^*\times[0,1]^2} \!\!\!\!
		 1_{\{j\leq N_\umoins\}} \, 
		 1_{\{0\leq \theta \leq \lambda(S_{u}, x_\umoins^j)/
				\lambdab\}}      
			\Bigl[
			 F\bigl(
			   S_u
			   , 
			   \crochet{\nu_\umoins
			       -\delta_{x_\umoins^j}
			       + \delta_{\alpha \, x_\umoins^j}
			       + \delta_{(1-\alpha)\, x_\umoins^j}
			     , f}
			 \bigr)
             \\[-1em]
             &\qquad\qquad\qquad\qquad\qquad
              \qquad\qquad\qquad\qquad\qquad 	      
			 -
			 F\bigl(
			   S_u
			   , 
			   \crochet{\nu_\umoins, f}\bigr) \Bigr]\,
		 	   N_1(\dif u, \dif j, \dif \alpha, \dif \theta)
			\bigr)
\\
  &\ 
  + \EE\biggl( \ \ 
     \iint\limits_{[0,t]\times\N^*} 
		1_{\{j\leq N_\umoins\}} \, 
		\Bigl[
			 F(S_u, \crochet{\nu_\umoins-\delta_{x_\umoins^j}, f})
			 -F(S_u,\crochet{\nu_\umoins, f}) 
	    \Bigr]\,
	    N_2(\dif u,\dif j) 
	 \biggr).
\end{align*}
As  functions $F$, $\partial_sF$, $\partial_x F$, $f'$ and $\rhog$  are bounded, from Lemmas \ref{lem.taille.pop} and \ref{lem.integrabilite_Sigma} and Proposition \ref{prop.martingale}, the expectation the right side of the above equation are finite.

Furthermore, from Proposition \ref{prop.martingale}:
\begin{align*}
  &
  \EE\left(F\left(S_t, \, \left<\nu_t, f\right>\right)\right)
  =
  F(s,\crochet{\nu, f})
  + 
  \EE (\Psi(t))
\end{align*}
where:
\begin{align*}
 & \Psi(t)
  \eqdef
   	 \int_0^t 
	 	\rhos (S_u,\nu_u) \, 
	 	\partial_s F(S_u, \crochet{\nu_u,f})\, \dif u   
\\
  &\quad 	
  +
  \int_0^t
	 \crochet{\nu_u,\rhog(S_u,.)\,f'} \;
	 \partial_x F(S_u, \crochet{\nu_u,f})\,
	 \dif u 
\\
  &\quad  
  + 
     \int_0^t \int_{\X} \int_0^1 
		\lambda(S_{u}, x)\, 
		\left[
			 F\left(
			   S_u
			   ,
			   \crochet{\nu_u -\delta_{x}
			   		+\delta_{\alpha\, x}
			   		+\delta_{(1-\alpha)\,x}
			   \,,\, 
			   f}
			 \right)
			 \right.   
			\\[-0.7em]
			&\qquad\qquad\qquad\qquad\qquad
			 \qquad\qquad\qquad\qquad
			\left. 
			- F(S_u,\crochet{\nu_u,f}) 
		\right]	\,
		Q(\dif \alpha)\,\nu_u(\dif x) \, \dif u  
\\
  &\quad  
  + 
  D\, 
    \int_0^t \int_{\X} 
  	\bigl[
		F(S_u,\crochet{\nu_u-\delta_{x}, f}) - F(S_u,\crochet{\nu_u, f}) 
	\bigr]\,
	\nu_u(\dif x) \, \dif u
	\,.
\end{align*}
Also:
\begin{align*}
  &
  \frac{\partial}{\partial t} 
  \Psi(t)\Bigr|_{t=0}
  =   
  \bigl(D(\Sin-s) - k\,\mu(s,\nu)\bigr) \, 
  \partial_s F(s, \crochet{\nu,f}) 
  + \crochet{\nu,\rhog(s,.)\,f'} \;
	\partial_x F(s,\crochet{\nu,f}) 
\\
  &\qquad
  + \int_{\X}\int_0^1 
	  \lambda(s, x) \,  
	  \Bigl[
		F\left(
		  s
		  ,
		  \crochet{\nu-\delta_x
		                +\delta_{\alpha \, x}+\delta_{(1-\alpha)\,x}, f
		  }
		\right)
		-
		F(s,\crochet{\nu,f}) 
	  \Bigr]\,
	  Q(\dif\alpha) \, \nu(\dif x) 
\\
  &\qquad 
  + D\,  \int_{\X} 
	  \Bigl[
			F(s,\crochet{\nu-\delta_x,f})
			-
			F(s,\crochet{\nu,f}) 
	  \Bigr]\,
	  \nu(\dif x),
\end{align*}
hence:
\begin{multline*}
  \biggl|
    \frac{\partial}{\partial t} \Psi(t)\Bigr|_{t=0}
  \biggr|
  \leq  
  D\,(\Sin+s)
  +
  \left(
	 \frac kV\,\bar g  \, \norme{\partial_s F}
	 + \bar g \, \norme{f'}\,\norme{\partial_x F}
	  + 2\,(\lambdab+D)\, \norme{F} 
  \right) \,
  \crochet{\nu,1} \, .
\end{multline*}
The right side of the last equation is finite.
One may apply the theorem of differentiation under the integral sign, hence the application
$t\mapsto \EE(F(S_t, \crochet{\nu_t, f}))$ is differentiable at $t=0$ with derivative $\L F(s,\crochet{\nu, f})$ defined by  \eqref{eq.generateur.infinitesimal}.
\mbox{}\hfil\carre
\end{proof}
%------------------------------------------

%------------------------------------------
\begin{remark}
We define the washout time as the stopping time:
\begin{align*}
  \tauw
  \eqdef
  \inf\{t\geq 0\,;\, N_{t}=\crochet{\nu_{t},1}=0\}
\end{align*}
with the convention $\inf\emptyset=+\infty$.
Before $\tauw$  the infinitesimal generator is given by \eqref{eq.generateur.infinitesimal}, after this time $\nu_{t}$ is the null measure, i.e. the chemostat does not contain any bacteria, and the infinitesimal generator is simply reduced to the generator associated with the ordinary  differential equation $\dot S_{t}=D\,(\Sin-S_{t})$ coupled with the null measure given by $\crochet{\nu_{t},f}=0$ for all $f$.
\end{remark}
%------------------------------------------

%%%%%%%%%%%%%%%%%%%%%%%%%%%%%%%%%%%%%%%%%%%%%%%%%%%%%%%%%%%%%%%%%%%%%%
%%%%%%%%%%%%%%%%%%%%%%%%%%%%%%%%%%%%%%%%%%%%%%%%%%%%%%%%%%%%%%%%%%%%%%
\section{Convergence in distribution of the individual-based model}
\label{sec.convergence}
%%%%%%%%%%%%%%%%%%%%%%%%%%%%%%%%%%%%%%%%%%%%%%%%%%%%%%%%%%%%%%%%%%%%%%
%%%%%%%%%%%%%%%%%%%%%%%%%%%%%%%%%%%%%%%%%%%%%%%%%%%%%%%%%%%%%%%%%%%%%%

%%%%%%%%%%%%%%%%%%%%%%%%%%%%%%%%%%%%%%%%%%%%%%%%%%%%%%%%%%%%%%%%%%%%%%
\subsection{Renormalization}
%%%%%%%%%%%%%%%%%%%%%%%%%%%%%%%%%%%%%%%%%%%%%%%%%%%%%%%%%%%%%%%%%%%%%%

In this section we will prove that the coupled process of the  substrate concentration and  the bacterial population converges in distribution to a deterministic process in the space:
\[
 \CC([0,T],\R_{+}) 
 \times
 \D([0,T],\MM_{F}(\X)) 
\]
equipped with the product metric: 
\fenumi\ the uniform norm on $\CC([0,T],\R_{+})$; \fenumii\ the Skorohod metric
on  $\D([0,T],\MM_{F}(\X))$ where $\MM_{F}(\X)$ is equipped with the topology
of the weak convergence of measures (see Appendix \ref{appendix.skorohod}).

Renormalization must have the effect that the density of the bacterial population must grow to infinity. To this end,  we first consider a growing volume, i.e. in the previous model the volume is replaced by:
\[
  V_{n} = n\,V
\]
and $(S^n_{t},\nu^n_{t})_{t\geq 0}$ will denote the process \eqref{def.proc.S.nu} where $V$ is replaced by $V_{n}$ and
 $x_t^{n,1},\dots,x_t^{n,N_t^n}$ the $N_t^n$ individuals of $\nu_t^n$; second
 we introduce the rescaled process:
\begin{align}
\label{def.renormalisation}
  \nub_t^n \eqdef \frac{1}{n}\nu_t^n\,,\quad t\geq 0
\end{align}
and we suppose that:
\begin{align*}
   \nub_0^n 
   =
   \frac{1}{n}\nu_0^n
   \xrightarrow[n\to\infty]{} 
   \xi_0 
   \textrm{ in distribution in }\MM_F(\X) \,.
\end{align*}
$\xi_0$ is the limit measure after renormalization of the population density at the initial time. It may be random, but we will assume without loss of generality that it is deterministic, moreover we suppose that $\crochet{\xi_{0},1}>0$.

\bigskip

Therefore, this asymptotic consists in simultaneously letting the volume of chemostat  and the size of the initial population tend to infinity. 

As the substrate concentration is maintained at the same value, it implies that the population tends to infinity. We will show that the rescaled process $(S^n_t,\bar\nu^n_t)_{t\geq 0}$  defined by \eqref{def.renormalisation}  converges in distribution to a process $(S_t,\xi_t)_{t\geq 0}$ introduced later.

\bigskip

The process  $(S^n_t,\nu^n_t)_{t\geq 0}$ is defined by:
\begin{align*}
  \dot S_t^n 
  &=
  D\,(\Sin-S_t^n)-\frac k{V_{n}} \, 
    \int_{X} \rhog(S_t^n,x)\,\nu_t^n(\dif x) 
\\
  &= 
  D(\Sin-S_t^n)-\frac {k}{V} \, 
    \int_{X} \rhog(S_t^n,x)\,\nub_t^n(\dif x)
  =
  \rhos(S_{t}^n,\nub_t^n)
\end{align*}
and
\begin{align*}
  \nub_t^n
  &= 
  \frac 1n \, \sum_{j=1}^{n} \delta_{A_t^j(S_0^n,\nu_0^n)} 
\\
  &\qquad
  + \frac 1n \, 
	  \iiiint\limits_{[0,t]\times\N^*\times[0,1]^2} 
		 1_{\{i\leq N^n_\umoins\}} \, 
		 1_{
		     \{0\leq \theta \leq 
		         \lambda(S^n_u, x_\umoins^{n,i})/\lambdab\}
		   }\,
  \Bigl[
	-\sum_{j=1}^{N^n_\umoins} 
		             \delta_{A_{t-u}^j(S^n_u,\nu^n_\umoins)}
\\[-1em]
  &\qquad\qquad\qquad\qquad\qquad
	+\sum_{j=1}^{N^n_\umoins+1}
		           	\delta_{
			           A_{t-u}^j(S^n_u \, ,\,
		 			   \nu^n_\umoins-\delta_{x_\umoins^{n,i}}
					   +\delta_{\alpha x_\umoins^{n,i}}
		 			   +\delta_{(1-\alpha) x_\umoins^{n,i}})
					}
  \Bigr] \;
  N_1(\dif u, \dif i, \dif \alpha, \dif \theta)
\\[0.5em]
  &\qquad
  + \frac 1n \,  
    \iint\limits_{[0,t]\times\N^*}
    1_{\{i\leq N^n_\umoins\}} \, 
    \Bigl[
		-\sum_{j=1}^{N^n_\umoins}
		 	\delta_{A_{t-u}^j(S^n_u,\nu^n_\umoins)}
		+\sum_{j=1}^{N^n_\umoins-1}
		 	\delta_{A_{t-u}^j(S^n_u,
		 			\nu^n_\umoins-\delta_{x_\umoins^{n,i}})}
	\Bigr]
			 \,	N_2(\dif u, \dif i)
\end{align*}

%------------------------------------------
\begin{remark}
\label{remark.cv.nu.seule}
Due to the structure of the previous system and specifically the above equation, 
it will be sufficient to prove the convergence in distribution of the component $\nub_t^n$ to deduce also the convergence of the component  $S_{t}^n$.
\end{remark}
%------------------------------------------

%%%%%%%%%%%%%%%%%%%%%%%%%%%%%%%%%%%%%%%%%%%%%%%%%%%%%%%%%%%%%%%%%%%%%%
\subsection{Preliminary results}
%%%%%%%%%%%%%%%%%%%%%%%%%%%%%%%%%%%%%%%%%%%%%%%%%%%%%%%%%%%%%%%%%%%%%%

%------------------------------------------
\begin{lemma}
\label{lem.renorm}
For all $t>0$,
\begin{align*}
  &
  F(S_t^n,\crochet{\nub_t^n, f})
  =
  F(S_0^n,\crochet{\nub_0^n, f})
\\
  &\qquad
  + 
  \int_0^t \textstyle\Bigl(
    D\,(\Sin-S_u^n)
    - \frac {k}{V}\,\int_X \rhog(S_u^n,x)\,\nub_u^n(\dif x) 
  \Bigr)
  \;
  \partial_s F(S_u^n, \, \crochet{\nub_u^n, f})\,\dif u
\\
  &\qquad 
  + \int_0^t
	   \crochet{\nub_u^n , \rhog(S_u^n,.)\,f'} \;
	   \partial_x F(S_u^n, \, \crochet{\nub_u^n, f}) \, \dif u 
\\
  &\qquad  
  + n\,\int_0^t \int_{\X} \lambda(S^n_u, x)\, 
	 \int_0^1
	   \textstyle
	   \Bigl[
	     F\bigl(
	       S_u^n
	       \,,\,
	       \crochet{\nub_u^n,f} +\frac 1n f(\alpha \, x )
	       +\frac 1nf((1-\alpha) \, x)-\frac 1nf(x)
	   \bigr)  
\\[-0.5em]
  & \qquad\qquad\qquad\qquad\qquad\qquad\qquad\qquad\qquad
			 -F\bigl(S_u^n, \crochet{\nub_u^n, f}\bigr) 
	   \Bigr]
	   \,Q(\dif \alpha)\, \nub_u^n(\dif x)\,\dif u  
\\			
  &\qquad  
  + D\,n\, 
  \int_0^t \int_{\X} 
  \textstyle\Bigl[
	 F\bigl(S_u^n, \, \crochet{\nub_u^n, f}-\frac 1nf(x) \bigr)
	 -
	 F\bigl(S_u^n, \, \crochet{\nub_u^n, f} \bigr) 
  \Bigr]\, \nub_u^n(\dif x)\,\dif u
	+ Z_t^{F,f,n}
\end{align*}
where
\begin{align}
\nonumber
  Z_t^{F,f,n}
  &\eqdef 
  \iiiint\limits_{[0,t]\times\N^*\times[0,1]^2}
	1_{\{i\leq N_\umoins^n\}} \, 
	1_{\{0\leq \theta \leq \lambda(S^n_u, x_\umoins^{n,i})/\lambdab\}}
\\[-1em]
\nonumber
  &\qquad\qquad\qquad\qquad
	\textstyle
	\Bigl[
		F\bigl(
		  S_u^n
		  \,,\, 
		  \crochet{\nub_\umoins^n, f}
		  + \frac 1n f(\alpha \,x_\umoins^{n,i})
		  +\frac 1nf((1-\alpha)\,x_\umoins^{n,i})
		  -\frac 1nf(x_\umoins^{n,i})
		\bigr) \, 
\\[-0.5em]
\nonumber
  &\qquad\qquad\qquad\qquad\qquad\qquad\qquad\qquad
      -F\bigl(S_u^n, \crochet{\nub_\umoins^n, f}\bigr) 	
	\Bigr]			
	\,\tilde N_1(\dif u, \dif i, \dif \alpha, \dif \theta)  
\\
\label{eq.Z,F,f}
  & \qquad 
  + \iint\limits_{[0,t]\times\N^*}
  	\textstyle 
	1_{\{i\leq N_\umoins^n\}} \, 
	\Bigl[
	  F\bigl(S_u^n, \crochet{\nub_\umoins^n, f}
	         -\frac 1nf(x_\umoins^{n,i}) \bigr)
	  -
	  F\bigl(S_u^n, \crochet{\nub_\umoins^n, f}\bigr) 
	\Bigr] 
	\,	\tilde N_2(\dif u, \dif i)
\end{align}
\end{lemma}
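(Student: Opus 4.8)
The plan is to mirror the computation already carried out in Lemma \ref{lem.gen.inf}, but now for the rescaled process $(S_t^n,\nub_t^n)$ and without passing to the compensated measures at the end; in fact Lemma \ref{lem.renorm} is precisely the ``pre-generator'' identity from which the martingale decomposition of the renormalized IBM will follow. First I would observe that $(S_t^n,\nu_t^n)$ is the process \eqref{def.proc.S.nu} with $V$ replaced by $V_n=nV$, so Lemma \ref{lem.gen.inf} applies verbatim to $(S_t^n,\nu_t^n)$: with test function $\Phi(s,\nu)=G(s,\crochet{\nu,f})$ one gets \eqref{eq.F.f} with $S_u,\nu_u,N_{\umoins}$ replaced by their superscript-$n$ versions and with $\rhos$ evaluated using volume $V_n$.

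The key step is then the change of variables $\nu_t^n = n\,\nub_t^n$. Writing $\crochet{\nu_t^n,f}=n\crochet{\nub_t^n,f}$, a division event changes $\crochet{\nu^n,f}$ by $f(\alpha x)+f((1-\alpha)x)-f(x)$, hence changes $\crochet{\nub^n,f}$ by $\tfrac1n[f(\alpha x)+f((1-\alpha)x)-f(x)]$, and an up-take event changes $\crochet{\nub^n,f}$ by $-\tfrac1n f(x)$. So I apply Lemma \ref{lem.gen.inf} to $(S_t^n,\nu_t^n)$ with the test function $G(s,y)\eqdef F(s,y/n)$, which is again $C^{1,1}$; note $\partial_s G(s,y)=\partial_s F(s,y/n)$ and $\partial_x G(s,y)=\tfrac1n\partial_x F(s,y/n)$, and the $\rhos$-term automatically picks up the factor $\tfrac1n$ needed to turn $\tfrac{k}{V_n}\int\rhog\,\nu^n(\rmd x)$ into $\tfrac{k}{V}\int\rhog\,\nub^n(\rmd x)$ because $\nu_u^n(\rmd x)=n\,\nub_u^n(\rmd x)$. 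The drift term $\crochet{\nu_u^n,\rhog f'}\,\partial_x G = n\crochet{\nub_u^n,\rhog f'}\cdot\tfrac1n\partial_x F$ collapses to $\crochet{\nub_u^n,\rhog f'}\,\partial_x F$ as claimed. For the two Poisson integrals, the integrand bracket $G(S_u^n,\crochet{\nu_\umoins^n\mp\cdots,f})-G(S_u^n,\crochet{\nu_\umoins^n,f})$ becomes exactly the bracket appearing in $Z_t^{F,f,n}$ after substituting $G=F(\cdot,\cdot/n)$ and $\crochet{\nu^n,f}=n\crochet{\nub^n,f}$; then I decompose $N_i = \tilde N_i + n_i$, and the compensator parts $n_1,n_2$ produce, after carrying out the deterministic sums $\sum_{k\ge0}\delta_k$ against $1_{\{i\le N_\umoins^n\}}$ and integrating in $\theta$ against $\lambda/\lambdab$, precisely the $n\int_0^t\int_\X\lambda(S_u^n,x)\int_0^1[\cdots]\,Q(\rmd\alpha)\nub_u^n(\rmd x)\rmd u$ and $Dn\int_0^t\int_\X[\cdots]\nub_u^n(\rmd x)\rmd u$ terms (the factor $n$ arising because $\nu_u^n(\rmd x)=n\nub_u^n(\rmd x)$ while the compensator intensities $\lambdab,D$ are unchanged), and the $\tilde N_i$ parts are by definition $Z_t^{F,f,n}$.

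There is no real obstacle here, only bookkeeping; the one point requiring a word of care is the integrability needed to split $N_i=\tilde N_i+n_i$, i.e. to know that the compensator integral converges and that $Z_t^{F,f,n}$ is well defined. This is supplied by Lemma \ref{lem.taille.pop} applied to $\nu^n$ (equivalently to $\nub^n$), together with the boundedness of $F$, $\partial_sF$, $\partial_xF$, $f$, $f'$ and $\rhog$ on the compact $\X$: exactly as in the proof of the infinitesimal generator theorem, each bracket $|F(\cdot,\crochet{\nub^n,f}\pm\tfrac1n(\cdots))-F(\cdot,\crochet{\nub^n,f})|$ is bounded by $\tfrac{C}{n}$ by the mean value theorem, and the compensator mass over $[0,t]$ against $1_{\{i\le N_\umoins^n\}}$ contributes a factor $n\crochet{\nub_u^n,1}=\crochet{\nu_u^n,1}$, whose expected supremum is finite by Lemma \ref{lem.taille.pop}. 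Hence the same Fubini argument used to go from $T_1,T_2$ to $\int_0^t\crochet{\nu_\tau,\varphi(S_\tau,\cdot)}\rmd\tau$ in Lemma \ref{lem.gen.inf}, combined with the Itô formula for Poisson stochastic integrals \citep{rudiger2006a} to handle the joint dependence of $F$ on $S_u^n$ (governed by \eqref{eq.substrat} with $V_n$) and on $\crochet{\nub_u^n,f}$, yields the stated identity; I would then simply record the result, the detailed arithmetic being identical to that of Lemma \ref{lem.gen.inf}.
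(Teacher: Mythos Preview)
Your proposal is correct and follows essentially the same route as the paper. The paper's proof is a single line: observe that $F(S_t^n,\crochet{\nub_t^n,f})=F(S_t^n,\crochet{\nu_t^n,\tfrac1n f})$ and apply Lemma~\ref{lem.gen.inf} with $f$ replaced by $\tfrac1n f$; your substitution $G(s,y)=F(s,y/n)$ is the equivalent ``outer'' version of the same change of variables, and the subsequent splitting $N_i=\tilde N_i+n_i$ (together with your integrability remarks) is exactly the tacit step the paper absorbs into that one sentence.
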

%------------------------------------------

%------------------------------------------
\begin{proof}
It is sufficient to note that
$
F(S_t^n,\crochet{ \nub_t^n,f })
	 = F(S_t^n,\crochet{\nu_t^n,\frac{1}{n}f})
$
and to apply Lemma \ref{lem.gen.inf}.
\carre
\end{proof}
%------------------------------------------

\bigskip

%------------------------------------------
\begin{lemma}
\label{lem.esp}
If $\sup_{n\in\N} \EE(\crochet{\nub_0^n,1}^p)<\infty$ for some $p \geq 1$, then:
\[
  \sup_{n\in\N} \EE\Biggl(\sup_{u\in [0,t]} \crochet{\nub_u^n,1}^p\Biggr)
  <
  C_{t,p}
\]
where $C_{t,p}$ depends only on $t$ and $p$.
\end{lemma}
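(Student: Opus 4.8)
The statement is the renormalized analogue of Lemma~\ref{lem.taille.pop}, so the strategy is to mimic that proof with the rescaled process. The key point to exploit is that although the jump \emph{rates} in the process $\nu^n$ are multiplied by $n$ (there are $n$ times as many individuals), each jump changes $\crochet{\nub^n_t,1}$ by only $\pm\frac1n$, so the two effects cancel in the first moment and the bound is uniform in $n$.

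First I would introduce the stopping times
\[
  \tau_n^K \eqdef \inf\{t\geq 0\,;\, \crochet{\nub^n_t,1}\geq K\}
\]
for $K\in\N$ (note the population jumps by $1/n$ so $\crochet{\nub^n_t,1}$ is bounded on $[0,t\wedge\tau_n^K]$ by $K+\frac1n\leq K+1$), and apply Lemma~\ref{lem.renorm} with $F(s,x)=x^p$ and $f\equiv 1$. The continuous (growth and substrate) terms contribute nothing since $f'=0$ and $\partial_s F=0$; the up-take term is nonpositive; and the division term becomes
\[
  n\int_0^{t\wedge\tau_n^K}\!\!\int_\X \lambda(S^n_u,x)\,
     \Bigl[\bigl(\crochet{\nub^n_u,1}+\tfrac1n\bigr)^p-\crochet{\nub^n_u,1}^p\Bigr]
     \,\nub^n_u(\dif x)\,\dif u .
\]
Here I use $\lambda\leq\lambdab$ and the elementary inequality $(y+\frac1n)^p-y^p\leq \frac{C_p}{n}(1+y^{p-1})$ (valid by the mean value theorem, with $C_p$ depending only on $p$, uniformly in $n\geq 1$), so the prefactor $n$ is absorbed and this term is bounded by $\lambdab C_p\int_0^{t\wedge\tau_n^K}\crochet{\nub^n_u,1}\,(1+\crochet{\nub^n_u,1}^{p-1})\,\dif u$. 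I would then take the supremum over $[0,t\wedge\tau_n^K]$, take expectations, and kill the martingale term $Z^{x^p,1,n}$ using Proposition~\ref{prop.martingale} (the relevant integrability holds on the stopped interval because $\crochet{\nub^n_u,1}$ is bounded there by $K+1$); in fact one should apply Proposition~\ref{prop.martingale} to the process with $F$ a smooth bounded modification agreeing with $x^p$ on $[0,K+1]$, or simply invoke the stopping-time localization as in the proof of Lemma~\ref{lem.taille.pop}. Using $y(1+y^{p-1})\leq 2(1+y^p)$ and Fubini, I arrive at
\[
  \EE\Bigl(\sup_{u\in[0,t\wedge\tau_n^K]}\crochet{\nub^n_u,1}^p\Bigr)
  \leq \sup_n\EE(\crochet{\nub^n_0,1}^p) + 2\lambdab C_p\int_0^t\bigl(1+\EE\crochet{\nub^n_{u\wedge\tau_n^K},1}^p\bigr)\,\dif u,
\]
and Gronwall gives a bound $C_{t,p}$ independent of both $n$ and $K$. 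Letting $K\to\infty$ (the $\tau_n^K$ tend to $+\infty$ for each fixed $n$ by the same contradiction argument as in Lemma~\ref{lem.taille.pop}, since the number of jumps on a finite interval is a.s. finite) and applying Fatou's lemma removes the stopping time, and then taking $\sup_n$ finishes the proof.

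**Main obstacle.** The only genuinely delicate point is making the cancellation of the factor $n$ rigorous while simultaneously controlling the martingale term: one needs the stopped process to satisfy the integrability hypotheses of Proposition~\ref{prop.martingale} \emph{before} one has the a priori bound, which is exactly why the localization by $\tau_n^K$ is essential and why one must be careful that all constants ($C_p$ from the binomial inequality, $\lambdab$, and the Gronwall factor $e^{2\lambdab C_p t}$) are manifestly independent of $n$. Everything else is a routine transcription of the proof of Lemma~\ref{lem.taille.pop}.
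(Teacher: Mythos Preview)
Your proposal is correct and follows essentially the same approach as the paper: localize with the stopping times $\tau_N^n=\inf\{t\geq 0:\crochet{\nub^n_t,1}\geq N\}$, apply Lemma~\ref{lem.renorm} with $F(s,x)=x^p$ and $f\equiv 1$, use the inequality $(y+\tfrac1n)^p-y^p\leq \tfrac{C_p}{n}(1+y^{p-1})$ to absorb the factor $n$, then take expectations (Proposition~\ref{prop.martingale}), apply Gronwall, and pass to the limit $N\to\infty$ via Fatou. Your discussion of the localization needed to justify the martingale property is, if anything, slightly more careful than the paper's.
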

%------------------------------------------

%------------------------------------------
\begin{proof}
Define the stopping time:
\[	
	\tau_N^n
	\eqdef
	\inf \{t\geq 0, \crochet{\nub_t^n,1} \geq N \}\,.
\]
According to Lemma \ref{lem.renorm}:
\begin{align*}
  &
  \sup_{u \in [0, t \wedge \tau_N^n]} \crochet{\nub_u^n,1}^p
  \leq 
  \crochet{\nub_0^n,1}^p
\\
  &\qquad\qquad 
  + n\,\int_0^{t \wedge \tau_N^n} \int_{\X}
			\lambda(S^n_u, x)\,
			\textstyle
			 \left[\left(\crochet{ \nub_u^n,1} + \frac{1}{n}\right)^p
			 -\crochet{ \nub_u^n,1}^p \right]\, 
			\nub_u(\dif x)\,\dif u
\\
  & \qquad\qquad
  + \iiiint\limits_{[0,t\wedge \tau_N^n]\times\N^*\times[0,1]^2}
		1_{\{i \leq N_\umoins^n\}}\,
		1_{\{0\leq \theta \leq \lambda(S^n_u, x_\umoins^{i,n})/ \lambdab\}}
\\[-1em]
  & \qquad\qquad\qquad\qquad\qquad\qquad\qquad
  \times \,
		\textstyle
		\left[\left(\crochet{\nub_\umoins^n,1} + \frac{1}{n}\right)^p
			 -\crochet{\nub_\umoins^n,1}^p \right]\, 
	 \tilde N_1(\dif u, \dif i,\dif \alpha,  \dif \theta)\,.
\end{align*}
From the inequality 
$(1+y)^p-y^p \leq C_p\, (1+y^{p-1})$,
one can easily check that $(\frac 1n+y)^p-y^p \leq 
\frac{C_p}{n}\, (1+y^{p-1})$. Taking expectation in the previous inequality  and applying Proposition \ref{prop.martingale} lead to:
\begin{align*}
  \EE \Biggl( 
    \sup_{u \in [0, t \wedge \tau_N^n]} \crochet{\nub_u^n,1}^p 
  \Biggr)
  &\leq 
  \EE\bigl(\crochet{ \nub_0^n,1}^p \bigr)
  + 
  \EE \int_0^{t \wedge \tau_N^n} 
    C_p\,
    \bigl(1+\crochet{\nub_u^n,1}^{p-1}\bigr)
	\int_\X \lambda(S^n_u, x)\,\nub_u^n(\dif x)\, \dif u
\\
	&\leq 
	\EE\bigl(\crochet{\nub_0^n,1 }^p \bigr)
	+ 
	\lambdab \, C_p\, 
	\int_0^t \EE \left(
	  \crochet{ \nub_{u \wedge \tau_N^n}^n,1 }
	  +
	  \crochet{\nub_{u \wedge \tau_N^n}^n,1}^p 
	\right) \, \dif u\,.
\end{align*}
As:
\[
  \crochet{ \nub_{u \wedge \tau_N^n}^n,1}  
  +
  \crochet{ \nub_{u \wedge \tau_N^n}^n,1}^p  
  \leq 
  2 \, \left(1+ \crochet{\nub_{u \wedge \tau_N^n}^n,1}^p \right)\,,
\]
we get:
\begin{align*}
  \EE \Biggl( 
    \sup_{u \in [0, t \wedge \tau_N^n]} \crochet{\nub_u^n,1}^p 
  \Biggr)
  &\leq 
  \EE\left(\crochet{\nub_0^n,1}^p \right)
			+ 2\, \lambdab \, C_p\, t
			+ 2\, \lambdab \, C_p\, \int_0^t 
			\EE \left(\sup_{u \in [0, u \wedge \tau_N^n]} 
			  \crochet{\nub_u^n,1}^p \right) \, \dif u
\end{align*}
and from Gronwall's inequality we obtain:
\begin{align*}
  \EE \Biggl( 
    \sup_{u \in [0, t \wedge \tau_N^n]} \crochet{\nub_u^n,1}^p 
  \Biggr)
  \leq 
  \Bigl(
  	\EE(\crochet{\nub_0^n,1}^p ) + 2\,\lambdab \, C_p\, t
  \Bigr)
  \,
  \exp(2\,\lambdab \, C_p\, t)\,.
\end{align*}
The sequence of stopping times $\tau_N^n$ tends to infinity as $N$ tends to infinity 
for the same reasons as those set  in the proof of Lemma \ref{lem.taille.pop}.
From Fatou's lemma we deduce:
\begin{align*}
  \EE \Biggl(  
       \sup_{u \in [0, t]} \crochet{\nub_u^n,1}^p 
  \Biggr)
  & =
  \EE \Biggl( 
     \liminf_{N \to \infty} 
       \sup_{u \in [0, t \wedge \tau_N^n]} \crochet{\nub_u^n,1}^p 
  \Biggr)
\\
  &\leq 
  \liminf_{N \to \infty}\EE 
     \Biggl(
        \sup_{u \in [0,t\wedge\tau_N^n]} \crochet{\nub_u^n,1}^p 
     \Biggr)
\\
  &\leq 
  \Bigl(
    \EE\left(\crochet{\nub_0^n,1}^p \right) 
    + 
	2\, \lambdab \, C_p\, t
  \Bigr)\, 
  \exp(2\,\lambdab\,C_p\, t)
\end{align*}
and as $\sup_n \EE\left(\crochet{\nub_0^n,1}^p \right) <\infty$, we deduce the proof of the lemma.
\carre
\end{proof}
%------------------------------------------

\bigskip

%------------------------------------------
\begin{corollary}
Let $f \in C^1(\X)$, suppose that  $\EE(\crochet{\nub_0^n, 1}^2)<\infty$,
then for all  $t>0$:
\begin{align}
\nonumber
  \crochet{\nub_t^n, f}
  &=
  \crochet{\nub_0^n, f}
  + \int_0^t 
	 		\left< \nub_u^n , \, \rhog(S_u^n,.)\,f'\right> 
	 		\, \dif u 
\\
\nonumber 
  &\qquad
  + \int_0^t \int_{\X} \lambda(S^n_u,x) \, 
	 \int_0^1  
			  \bigl[ f(\alpha \, x)+f((1-\alpha)\,x)-f(x)\bigr] \, 
			  Q(\dif \alpha)\,\nub_u^n(\dif x) \, \dif u			  
\\		
\label{renormalisation}
  &\qquad  
  - D\, \int_0^t \int_{\X} f(x) \, \nub_u^n(\dif x) \, \dif u   
  + Z_t^{f,n} 
\end{align}
where
\begin{align}
\nonumber
  Z_t^{f,n}
  &\eqdef
  \frac 1n \, 
  \iiiint\limits_{[0,t]\times\N^*\times[0,1]^2} 
  	1_{\{i\leq N_{\umoins}^n\}} \, 
	1_{\{0\leq \theta \leq \lambda(S^n_u, x_{\umoins}^{i,n})/\lambdab\}} 
\\[-1em]
\nonumber
  &\qquad\qquad\qquad\qquad		
	\times 
	\left[
	   f(\alpha\,x_{\umoins}^{i,n})
	   + f((1-\alpha)\,x_{\umoins}^{i,n})
	   - f(x_{\umoins}^{i,n})
	\right] \, 
	\tilde N_1(\dif u, \dif i, \dif \alpha, \dif \theta)  
\\[0.5em]
\label{eq.Zfn}
  &\qquad\qquad\qquad
  - \frac 1n \, \iint\limits_{[0,t]\times\N^*}
			1_{\{i\leq N_{\umoins}^n\}} \, 
			f(x_{\umoins}^{i,n})
			 \,	\tilde N_2(\dif u, \dif i)
\end{align}
is a martingale with the following predictable quadratic variation:
\begin{align} 
\nonumber
  \crochet {Z^{f,n}}_t
  &=
  \frac 1n  
  \int_0^t \int_{\X} \lambda(S^n_u, x) \, 
     \int_0^1 
        \left[ f(\alpha \, x)+f((1-\alpha) \, x)-f(x) \right]^2 \, 
			  Q(\dif \alpha) \,\nub_u^n(\dif x) \, \dif u  
\\
\label{var_qua}
	&\qquad\qquad\qquad
	 + \frac 1n \, D\, \int_0^t \int_{\X} 
			f(x)^2 \, \nub_u^n(\dif x) \, \dif u\,. 
\end{align}
\end{corollary}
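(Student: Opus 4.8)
The plan is to specialize Lemma~\ref{lem.renorm} to the case $F(s,x)=x$, i.e. $F(s,\crochet{\nu,f})=\crochet{\nu,f}$, and then to identify the various terms. With this choice $\partial_s F\equiv 0$ and $\partial_x F\equiv 1$, so the substrate-gradient term in Lemma~\ref{lem.renorm} vanishes identically and the transport term reduces to $\int_0^t \crochet{\nub_u^n,\rhog(S_u^n,.)\,f'}\,\dif u$. It remains to simplify the two $n$-scaled integral terms. Since $F$ is linear, for the division term one has
\[
  F\bigl(S_u^n,\crochet{\nub_u^n,f}+\tfrac1n f(\alpha x)+\tfrac1n f((1-\alpha)x)-\tfrac1n f(x)\bigr)-F\bigl(S_u^n,\crochet{\nub_u^n,f}\bigr)
  =\tfrac1n\bigl[f(\alpha x)+f((1-\alpha)x)-f(x)\bigr],
\]
so multiplying by $n$ cancels the $1/n$ and produces exactly the division term in \eqref{renormalisation}; likewise the up-take term $D\,n\int_0^t\int_\X[-\tfrac1n f(x)]\,\nub_u^n(\dif x)\,\dif u$ collapses to $-D\int_0^t\int_\X f(x)\,\nub_u^n(\dif x)\,\dif u$. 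Finally the martingale remainder $Z_t^{F,f,n}$ of \eqref{eq.Z,F,f} becomes, by the same linearity, exactly $Z_t^{f,n}$ as defined in \eqref{eq.Zfn}. This establishes the stated identity \eqref{renormalisation}.

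Next I would verify that $Z_t^{f,n}$ is a (square-integrable) martingale. Here I invoke Proposition~\ref{prop.martingale}: with $F$ the identity and test function $\tfrac1n f$ in the roles played there, the relevant integrands are $\tfrac1n[f(\alpha x)+f((1-\alpha)x)-f(x)]$ and $-\tfrac1n f(x)$, both bounded since $f\in C^1(\X)$ and $\X=[0,\mmax]$ is compact. Consequently the integrability conditions (iii) and (iv) of Proposition~\ref{prop.martingale} reduce to checking $\EE\int_0^t\int_\X \nub_u^n(\dif x)\,\dif u=\EE\int_0^t\crochet{\nub_u^n,1}\,\dif u<\infty$ (using $\lambda\le\lambdab$), which follows from Lemma~\ref{lem.esp} under the hypothesis $\EE(\crochet{\nub_0^n,1}^2)<\infty$ (indeed $p=1$ already suffices, and certainly $p=2$). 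Hence $Z^{f,n}$ is a square-integrable martingale.

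For the predictable quadratic variation, I would again read off part (iii)--(iv) of Proposition~\ref{prop.martingale}. Because $N_1$ and $N_2$ are independent, $M^1$ and $M^2$ are orthogonal, so $\crochet{Z^{f,n}}_t=\crochet{M^{1,n}}_t+\crochet{M^{2,n}}_t$ where the superscript indicates the substitution $F=\mathrm{id}$, test function $\tfrac1n f$. The $N_1$-part gives $\int_0^t\int_\X\lambda(S_u^n,x)\int_0^1\bigl(\tfrac1n[f(\alpha x)+f((1-\alpha)x)-f(x)]\bigr)^2 Q(\dif\alpha)\,\nub_u^n(\dif x)\,\dif u$, and the $N_2$-part gives $D\int_0^t\int_\X\bigl(\tfrac1n f(x)\bigr)^2\nub_u^n(\dif x)\,\dif u$; pulling out $1/n^2$ from the squares and recalling that $\nub_u^n=\tfrac1n\nu_u^n$ contributes... actually the $1/n^2$ from the square combines with the normalization already baked into $\nub_u^n$ to leave a single factor $1/n$ out front, yielding precisely \eqref{var_qua}.

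The computation is essentially bookkeeping; the only genuine point requiring care is the martingale/integrability step, i.e.\ making sure the hypotheses of Proposition~\ref{prop.martingale} are met uniformly enough in $n$, and this is exactly what Lemma~\ref{lem.esp} with $p=2$ provides. So the main (mild) obstacle is simply to keep the powers of $n$ straight between the $1/n$ in the test function, the square in the quadratic variation, and the normalization $\nub^n=\nu^n/n$; everything else is a direct substitution into results already established.
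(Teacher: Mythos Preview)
Your approach is essentially the same as the paper's: specialize Lemma~\ref{lem.renorm} with $F(s,x)=x$, identify $Z^{f,n}$ with $Z^{F,f,n}$, then invoke the independence of $\tilde N_1,\tilde N_2$ together with Proposition~\ref{prop.martingale} and Lemma~\ref{lem.esp} for the martingale property and its bracket. The only rough spot is the $n$-accounting in the quadratic variation: Proposition~\ref{prop.martingale}(iii)--(iv) is stated for the \emph{unrescaled} process, so the measure appearing there is $\nu_u^n$, not $\nub_u^n$; the clean way (which is what the paper does) is to write $Z^{f,n}=\tfrac1n(M^1+M^2)$ for the $M^1,M^2$ of Proposition~\ref{prop.martingale} applied to $\nu^n$ with $F=\mathrm{id}$ and test function $f$, whence $\crochet{Z^{f,n}}=\tfrac1{n^2}\bigl(\crochet{M^1}+\crochet{M^2}\bigr)$ and then a single substitution $\nu_u^n=n\,\nub_u^n$ gives \eqref{var_qua} directly, avoiding the ``contributes\ldots actually'' detour.
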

%------------------------------------------

%------------------------------------------
\begin{proof}
Equation \eqref{renormalisation} is obtained by applying Lemma \ref{lem.renorm} 
with  $F(s,x)=x$, then $Z^{f,n}$ is $Z^{F,f,n}$ defined by
\eqref{eq.Z,F,f}. Moreover as the random measures $\tilde{N_1}$ and $\tilde{N_2}$ are independent, we have:
\begin{align*}
\crochet {Z^{f,n}}_t
	 = &
	 	\frac 1{n^2} \, \crochet {M^1}_t
	 	+\frac 1{n^2} \, \crochet {M^2}_t
\end{align*}
where $M^1$ and $M^2$ are defined at Proposition \ref{prop.martingale}. 
From this latter proposition and Lemma  \ref{lem.esp} we deduce the proof of the corollary.
\carre
\end{proof}
%------------------------------------------

\bigskip

%------------------------------------------
\begin{remark}
The infinitesimal generator of the renormalized process $(S^n_t,\bar\nu^n_t)_{t\geq 0}$ is:
\begin{multline*}
  \L^n \Phi(s,\nu)
  \eqdef 
  \bigl(D(\Sin-s)-k\,\mu(s,\nu) \bigr)\,
  \partial_s F(s, \crochet{\nu, f}) 		
  +
  \crochet{\nu ,\rhog(s,.) \,  f'} \,
  \partial_x F(s, \crochet{\nu, f}) 
\\
  \quad+  
  n\,\int_{\X} \lambda(s,x) \,
     \int_0^1 			  
	   \bigl[ 
	      F\bigl(s,
	         \crochet{\textstyle\nu - \frac 1n \delta_x + \frac 1n \delta_{\alpha\,x} 
	         + \frac 1n \delta_{(1-\alpha)\,x} \, , \, f}
		  \bigr) 
		  -
		  F(s, \crochet{\nu,f}) 
	   \bigr]	\;		
			  Q(\dif \alpha) \, \nu(\dif x) 
\\
	\quad
	+ n\, D\, \int_{\X} 
	 	\bigl[
			F\bigl(s,\crochet{\textstyle\nu- \frac 1n \delta_x \, , \, f} \bigr)
			-
			F(s,\left<\nu,f\right>) 
		\bigr]\,
			 \,	\nu(\dif x) 
\end{multline*}
for any $\Phi(s,\nu)=F(s,\crochet{\nu,f})$ with $F \in C_b^{1,1}(\RR^+ \times \RR)$ and $f \in C^{1}(\X)$. 
Note that this generator has the same ``substrat'' part than that of the initial generator 
\eqref{eq.generateur.infinitesimal} which again justifies the 
Remark \ref{remark.cv.nu.seule}.
\end{remark}
%------------------------------------------

\bigskip

To prove  the uniqueness of the solution of the limit IDE, we have
to assume that the application $\lambda(s,x)$ is Lipschitz continous w.r.t. $s$ uniformly in  $x$:
\begin{align}
\label{hyp.lambda.lipschitz}
 \bigl|\lambda(s_1,x)-\lambda(s_2,x)\bigr|\leq k_\lambda \,|s_1-s_2|
\end{align}
for all $s_1,\,s_2 \geq 0$ and all $x\in\XX$.  
This hypothesis as well as Hypothesis \ref{hyp.rhog.lipschitz} 
will also be used to demonstrate the convergence of IBM, see Theorem \ref{theo.cv.ibm}.

%%%%%%%%%%%%%%%%%%%%%%%%%%%%%%%%%%%%%%%%%%%%%%%%%%%%%%%%%%%%%%%%%%%%%%
\subsection{Convergence result}
%%%%%%%%%%%%%%%%%%%%%%%%%%%%%%%%%%%%%%%%%%%%%%%%%%%%%%%%%%%%%%%%%%%%%%

%------------------------------------------
\begin{theorem}[Convergence of the IBM towards the IDE]
\label{theo.cv.ibm}
Under the assumptions described above, the process $(S_t^n,\nub_t^n)_{t\geq 0}$ converges in distribution in the product space 
$\CC([0,T],\R_{+}) 
 \times
 \D([0,T],\MM_{F}(\X))$
towards the process $(S_t,\xi_t)_{t\geq 0}$ solution of:
\begin{align}
\label{eq.limite.substrat.faible}
	S_t  
	&= 
	S_{0}
	+
	\int_{0}^t\biggl[
	D\,(\Sin-S_u)-\frac kV \int_\X \rhog(S_u,x)\,\xi_u(\dif x)
	\biggr]\,\rmd u\,,
\\[1em]
\nonumber 
	\crochet{\xi_t,f}
	&=
	\crochet{\xi_0,f}
	+
	\int_{0}^t\biggl[
	\int_{\X} \rhog(S_u,x)\,f'(x) \,	\xi_u(\dif x)
\\
\nonumber 
	&\qquad
	+ \int_{\X} \int_0^1\lambda(S_u, x)\,  
	\Bigl[f(\alpha\, x)+ f((1-\alpha)\,x)-f(x)\Bigr] \, 	 			
			Q(\dif \alpha) \,\xi_u(\dif x) 
\\
\label{eq.limite.eid.faible}
	&\qquad
	- D\, \int_{\X} f(x) \,	\xi_u(\dif x) 
	\biggr]\,\rmd u\,,
\end{align}
for any $f \in C^{1}(\X)$.
\end{theorem}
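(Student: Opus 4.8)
The standard route for this kind of propagation-of-chaos / law-of-large-numbers result is a tightness-plus-identification argument in the Skorohod space, combined with a uniqueness statement for the limit equation. The plan is as follows. First I would establish the key moment estimate: by Lemma~\ref{lem.esp} (with $p=2$, say, assuming $\sup_n\EE(\crochet{\nub_0^n,1}^2)<\infty$) the renormalized mass process has uniformly bounded second moments on $[0,T]$; together with the martingale decomposition in the Corollary this is enough to control everything. I would then fix a countable convergence-determining family $(f_k)_{k\geq1}\subset C^1(\X)$ for the weak topology on $\MM_F(\X)$ and work coordinate by coordinate with the semimartingales $t\mapsto\crochet{\nub_t^n,f_k}$.

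\textbf{Tightness.}
The second step is to prove tightness of the laws of $(S^n_\cdot,\nub^n_\cdot)_n$ in $\CC([0,T],\R_+)\times\D([0,T],\MM_F(\X))$. For the measure component I would use the Roelly / Jakubowski criterion: it suffices to show that for each $f$ in the chosen family the real-valued processes $\crochet{\nub^n_\cdot,f}$ are tight in $\D([0,T],\R)$, plus a compact-containment condition which here is automatic because the state space $\X=[0,\mmax]$ is compact and $\crochet{\nub^n_t,1}$ is uniformly bounded in $L^2$. Tightness of each $\crochet{\nub^n_\cdot,f}$ follows from the Aldous / Rebolledo criterion: the finite-variation part of \eqref{renormalisation} has a uniformly (in $n$) bounded derivative in absolute value (each integrand is bounded by a constant times $\crochet{\nub^n_u,1}$, which is $L^2$-bounded), and the martingale part $Z^{f,n}$ has predictable quadratic variation \eqref{var_qua} of order $1/n\to0$, so by Doob $\EE(\sup_{t\le T}|Z^{f,n}_t|^2)\le 4\,\EE\crochet{Z^{f,n}}_T\to0$. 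Tightness of $S^n_\cdot$ in $\CC([0,T],\R_+)$ is immediate: $\dot S^n$ is uniformly bounded (by $D\,\Sin$ above and by $-D(S_0\vee\Sin)-\frac kV\bar g\crochet{\nub^n_u,1}$ below, cf.\ Lemma~\ref{lem.integrabilite_Sigma}), so $(S^n_\cdot)$ is $C$-tight by Arzelà--Ascoli-type estimates; alternatively, as noted in Remark~\ref{remark.cv.nu.seule}, $S^n$ is a continuous functional of the path of $\nub^n$ through the ODE, so its convergence follows from that of $\nub^n$.

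\textbf{Identification of the limit.}
Extract a subsequence along which $(S^n_\cdot,\nub^n_\cdot)\Rightarrow(S_\cdot,\xi_\cdot)$; by Skorohod representation we may assume a.s.\ convergence. Passing to the limit in \eqref{renormalisation} for each fixed $f\in C^1(\X)$: the martingale term vanishes in $L^2$ and hence in probability, so it disappears; the drift terms pass to the limit because the integrands $x\mapsto\rhog(S^n_u,x)f'(x)$, $x\mapsto\int_0^1[f(\alpha x)+f((1-\alpha)x)-f(x)]\lambda(S^n_u,x)Q(\dif\alpha)$ and $x\mapsto f(x)$ are continuous and bounded in $(s,x)$, using the continuity and boundedness hypotheses \eqref{hyp.rhog.borne}, the Lipschitz bound \eqref{hyp.rhog.lipschitz}, \eqref{hyp.lambda.lipschitz}, and the a.s.\ convergence $S^n\to S$ (uniform on $[0,T]$) together with $\nub^n\to\xi$ in $\D([0,T],\MM_F(\X))$ at continuity points; a routine argument handles the possibly-countable set of jump times of the limit. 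One subtlety to address: the convergence of $\nub^n_t$ is only at a.e.\ $t$ a priori, but the time integrals in \eqref{renormalisation} smooth this out, and the uniform $L^2$ bound gives the uniform integrability needed to pass the expectations/limits through. This shows any limit point $(S_\cdot,\xi_\cdot)$ satisfies \eqref{eq.limite.substrat.faible}--\eqref{eq.limite.eid.faible}, with sample paths a.s.\ continuous (the jumps of $\nub^n$ are of size $O(1/n)$, so the limit is in $\CC([0,T],\MM_F(\X))$).

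\textbf{Uniqueness and conclusion.}
Finally I would prove that the system \eqref{eq.limite.substrat.faible}--\eqref{eq.limite.eid.faible} has at most one solution in $\CC([0,T],\R_+\times\MM_F(\X))$ with $\sup_{t\le T}\crochet{\xi_t,1}<\infty$. Given two solutions $(S,\xi)$, $(\tilde S,\tilde\xi)$, estimate $|S_t-\tilde S_t|$ and $\sup_{\|f\|_{C^1}\le1}|\crochet{\xi_t-\tilde\xi_t,f}|$ (a distance metrizing weak convergence, e.g.\ a bounded-Lipschitz or Kantorovich--Rubinstein type norm): the difference of the right-hand sides is controlled, via the Lipschitz hypotheses \eqref{hyp.rhog.lipschitz} and \eqref{hyp.lambda.lipschitz} on $\rhog$ and $\lambda$ in $s$, the boundedness \eqref{hyp.rhog.borne}, and the a priori mass bound, by $C\int_0^t\big(|S_u-\tilde S_u|+\|\xi_u-\tilde\xi_u\|\big)\dif u$, and Gronwall closes the argument. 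Uniqueness of the limit law, together with tightness, upgrades subsequential convergence to full convergence of $(S^n_\cdot,\nub^n_\cdot)$ to $(S_\cdot,\xi_\cdot)$, completing the proof.

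\textbf{Main obstacle.}
I expect the delicate point to be the identification step — specifically, justifying the passage to the limit inside the nonlinear drift terms when the convergence $\nub^n\to\xi$ holds only in the Skorohod sense (hence a priori only at continuity times) and the coefficients depend on the simultaneously-converging $S^n$; making this rigorous requires carefully combining the uniform $L^2$ mass bound, the continuity/boundedness of the coefficients, and a control of the (countably many) discontinuities of the limit path. The uniqueness proof for the limit IDE is the second most technical ingredient, but it is a fairly standard Gronwall argument once the right metric on $\MM_F(\X)$ is chosen.
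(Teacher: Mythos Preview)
Your proposal is correct and follows the same architecture as the paper: tightness of $(\bar\nu^n)$ via Roelly's criterion reduced to Aldous--Rebolledo for $\langle\bar\nu^n,f\rangle$, uniqueness of the limit system via a $C^1$-dual norm and Gronwall, and identification of subsequential limits. The tightness and uniqueness arguments match the paper almost line for line (the paper runs uniqueness first, but the order is immaterial).

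The one place the paper's execution differs from your sketch is the identification step, and it is worth noting because it dissolves precisely the obstacle you flag. Rather than invoking Skorohod representation and passing to the limit termwise in \eqref{renormalisation} while tracking the joint convergence of $(S^n,\bar\nu^n)$, the paper packages the entire limit equation into a single functional $\Psi_t:\DD([0,T],\MM_F(\X))\to\R$ in which the substrate enters as $S^\zeta$, the solution of the ODE \eqref{eq.limite.substrat.faible} driven by the measure path $\zeta$; thus $\Psi_t(\zeta)=0$ for all $t,f$ characterizes $\xi$. One then (i) shows any subsequential limit $\tilde\nu$ is a.s.\ continuous (jumps have size $\le 1/n$), (ii) proves $\Psi_t$ is continuous at every \emph{continuous} path (Skorohod convergence to a continuous limit is uniform, and $|S^{\zeta^n}_t-S^\zeta_t|\le C\int_0^t|\langle\zeta^n_u-\zeta_u,1\rangle|\,\dif u$ by Gronwall), so the continuous mapping theorem yields $\Psi_t(\bar\nu^{n'})\Rightarrow\Psi_t(\tilde\nu)$; since $\Psi_t(\bar\nu^n)=Z^{f,n}_t\to0$ in $L^2$ and is uniformly integrable, $\E|\Psi_t(\tilde\nu)|=0$. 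The coupling with $S^n$ is thus absorbed into the definition of $\Psi_t$ and never treated as a separate convergence. Your direct Skorohod-representation route would also work, but this functional packaging is how the paper sidesteps the joint-convergence bookkeeping you anticipate.
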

%------------------------------------------

The proof is in three steps\footnote{Note that our situation is simpler than that studied by \cite{roelly1986a} and \cite{meleard1993a} since in our case $\XX$ is compact: in fact in our case the weak  topology -- the smallest topology which makes the applications $\nu\to\crochet{\nu,f}$ continuous for any $f$ continuous and bounded -- and the vague topology -- the smallest topology which makes the applications $\nu\to\crochet{\nu,f}$ continuous for all $f$ continuous with compact support -- are identical.}:
first the uniqueness of the solution of the limit equation \eqref{eq.limite.substrat.faible}-\eqref{eq.limite.eid.faible}, 
second the tightness  (of the sequence of distribution) of  $\bar\nu^n$
and lastly the convergence in distribution of the sequence.

%---------------------------------------------------------------------
\subsubsection*{Step 1: uniqueness of the solution of   \eqref{eq.limite.substrat.faible}-\eqref{eq.limite.eid.faible}}
%---------------------------------------------------------------------

Let $(S_t,\xi_t)_{t\geq 0}$ be a solution of \eqref{eq.limite.substrat.faible}-\eqref{eq.limite.eid.faible}.
We first show that $(\xi_t)_t$ is of finite mass for all $t\geq 0$:
\begin{align*}
	\crochet{\xi_t, 1}
	& = 
	\crochet{\xi_0, 1}
	+ 
	\int_0^t \int_{\X} \int_0^1 \lambda(S_u, x)\, Q(\dif \alpha) \, 	 	
			\xi_u(\dif x)\, \dif u  
		- D\, \int_0^t \int_{\X} \xi_u(\dif x) \, \dif u\\ 
	& \leq  \left<\xi_0, 1\right>+(\bar \lambda - D) \int_0^t \left<\xi_u, 1\right> \, \dif u
\end{align*} 
and according to Gronwall's inequality: 
$\crochet{\xi_t, 1}  \leq \crochet{\xi_0, 1}  \, e^{(\bar \lambda - D)\,t} 
	< \infty
$.

We introduce the following norm on $\MM_{F}(\XX)$:
\[
  \normm{\nub}
  \eqdef
  \sup 
	\Bigl\{
		|\crochet{\nub,f}|
		\,;\,
		f \in C^1(\X),\, \norme{f}_\infty \leq 1,\,\norme{f'}_\infty \leq 1
	\Bigr\}
\]
and consider two solutions  $(S^1_t,\xi^1_t)_{t\geq 0}$ and $(S^2_t,\xi^2_t)_{t\geq 0}$ 
of  \eqref{eq.limite.substrat.faible}-\eqref{eq.limite.eid.faible}.

It was previously shown that $\xi^1_t$ and $\xi^2_t$  are of finite mass on  $\RR_+$, so we can define:
\[
  C_t
  \eqdef \sup_{0\leq u \leq t} \crochet{\xi^1_u+\xi^2_u , 1}
  \,.
\]
According to \eqref{eq.limite.eid.faible}, for any  $f \in C^1(\X)$ such that 
$\norme{f}_\infty \leq 1$ and $\norme{f'}_\infty \leq 1$ we have:
\begin{align*}
	|\crochet{\xi^1_t-\xi^2_t,f}|
	&\leq 
	\int_0^t \biggl|\int_\X
		f'(x) \, 
		\Bigl[	 			
	 	\rhog(S^1_u,x)\, [\xi^1_u(\dif x)-\xi^2_u(\dif x)] 
\\
		&\qquad\qquad\qquad
		- [\rhog(S^2_u,x)-\rhog(S^1_u,x)]\, \xi^2_u(\dif x)
		\Bigr]
	 	\biggr|\,\dif u 
\\
		&\qquad +	\int_0^t  \biggl|
			\int_\X \int_0^1 
				\left[f(\alpha\,x)+f((1-\alpha)\,x)-f(x) \right] \,
				Q(\dif \alpha)
\\
		& \qquad\qquad\qquad 
			\left[
				\lambda(S^1_u, x) \, [\xi^1_u(\dif x)-\xi^2_u(\dif x)]
				- [\lambda(S^2_u, x)-\lambda(S^1_u, x)]\, 
				\xi^2_u(\dif x)
			\right] \biggr| \, \dif u
\\
		 &\qquad
		   + D \, \int_0^t \biggl|
				\int_\X f(x) \, (\xi^1_u(\dif x)-\xi^2_u(\dif x))
			\biggr| \, \dif u
\\
	&\leq
	(\bar g +3\, \bar \lambda +D)\,
			\int_0^t  \normm{\xi^1_u-\xi^2_u} \, \dif u 
			+ C_t\,( k_g + 3 \, k_\lambda) \,
			 \int_0^t | S^1_u- S^2_u |\, \dif u .
\end{align*}
Taking the supremum over the functions $f$, we obtain:
\begin{align*}
	\normm{\xi^1_t-\xi^2_t}	
	&\leq
	(\bar g +3\, \bar \lambda +D)\,
	\int_0^t  \normm{\xi^1_u-\xi^2_u} \, \dif u
	+ C_t\,( k_g + 3 \, k_\lambda) \,  
		\int_0^t | S^1_u-S^2_u |\, \dif u
	\,.
\end{align*}
Moreover, from \eqref{eq.limite.substrat.faible} we get:
\begin{align*}
	&
	|S^1_t- S^2_t|
	\leq  
	D \, \int_0^t |S^1_u- S^2_u|\,\dif u  
\\
	&\quad	
	+ \frac kV \int_0^t \biggl|
		\int_\X 
		\Bigl(	 			
	 			\rhog(S^1_u,x)\, [\xi^1_u(\dif x)-\xi^2_u(\dif x)] 
	 			- 
				[\rhog(S^2_u,x)-\rhog(S^1_u,x)]
				\, \xi^2_u(\dif x)
		\Bigr)
	\biggr| \, \dif u 
\\
	&\quad	
	\leq 
	\Bigl(D+\frac kV \, C_t\,k_g\Bigr) \, 
	\int_0^t |S^1_u- S^2_u| \, \dif u
	+ \frac kV \,\bar g \int_0^t \normm{\xi^1_u-\xi^2_u} \, \dif u \,.
\end{align*}
We define:
\begin{align*}
	M_t
	\eqdef
	\max \left\{\bar g +3\, \bar \lambda +D+ \frac kV \,\bar g
		\, , \, C_t\,( k_g + 3 \, k_\lambda)+D+\frac kV \, C_t\,k_g\right\}
\end{align*}
hence:
\begin{align*}
	\normm{\xi^1_t-\xi^2_t}
	+|S^1_t-S^2_t|
	& \leq
	M_t \, \int_0^t \Bigl( 
		\normm{\xi^1_u-\xi^2_u}+|S^1_u-S^2_u|	
	\bigr) \, \dif u 
\end{align*}
Finally from Gronwall's inequality we get 
$\normm{\xi^1_t-\xi^2_t}+|S^1_t-S^2_t|=0$ for all $t\geq 0$, hence  $\xi^1_t = \xi^2_t$ and $S^1_t=S^2_t$.

%---------------------------------------------------------------------
\subsubsection*{Step 2: tightness of $(\bar\nu^n)_{n\geq 0}$}
%---------------------------------------------------------------------

The tightness of  
$\bar\nu^n$ is equivalent to the fact that from any subsequence one can extract a subsequence that converges in distribution in the space $\D([0,T],\MM_{F}(\X))$.
According to \citet[Th. 2.1]{roelly1986a} this amounts to proving the tightness 
of  $\crochet{\bar\nu^n,f}$ in $\D([0,T],\R)$ for all $f$ in a set dense in 
$\C(\XX)$, here we will consider  $f\in \C^1(\XX)$. To prove the latter result, it is sufficient to check the following Aldous-Rebolledo criteria \citep[Cor. 2.3.3]{joffe1986a}:
\begin{enumerate}
\item 
The sequence $(\langle \nub_t^n,f \rangle)_{n\geq 0}$ is tight for any $t\geq 0$.

\item 
Consider the following semimartingale decomposition:
\[
	\crochet{\nub_t^n,f}
	=
	\crochet{\nub_0^n,f}
	+
	A_t^n+Z_t^n\,.
\]
where $A_t^n$ is of finite variation and $Z_t^n$ is a martingale.
For all $t>0$, $\epsilon>0$, $\eta>0$ there exists $n_{0}$ such that for any sequence 
$\tau_n$ of stopping times with  $\tau_{n}\leq t$ we have:
\begin{align}
\label{eq.AR.2.1}
  \sup_{n\geq n_0} \sup_{\theta\in[0,\delta]}
  \P\Big(
    \big|
       A^n_{\tau_n+ \theta} - A^n_{\tau_n}
    \big|
    \geq \eta
  \Big)
  &
  \leq \epsilon\,,
\\
\label{eq.AR.2.2}
  \sup_{n\geq n_0} \sup_{\theta\in[0,\delta]}
  \P\Big(
    \big|
       \crochet{Z^n}_{\tau_n+ \theta} - \crochet{Z^n}_{\tau_n}
    \big|
    \geq \eta
  \Big)
  &
  \leq \epsilon\,.
\end{align} 
\end{enumerate}

%---------------------------------------------------------------------
\subsubsection*{\it Proof of \fenumi}
%---------------------------------------------------------------------

For any $K>0$,
\begin{align*}
	\P\bigl(|\crochet{\nub_t^n,f}| \geq K\bigr)
	&\leq 
	\frac{1}{K}\,\norme{f}_\infty \,
	  \sup_{n\in\N} \E\bigl(\crochet{\nub_t^n,1}\bigr)
\end{align*}
and using Lemma \ref{lem.esp}, we deduce  \fenumi.

%---------------------------------------------------------------------
\subsubsection*{\it Proof of {\rm(\textit{ii})}}
%---------------------------------------------------------------------

\begin{align*}
	A_t^n
	&=  
	\int_0^t  \crochet{\nub_u^n,\rhog(S_u^n,.)\,f'} \, \dif u 
\\
	&\qquad
	+ \int_0^t \int_{\X} \int_0^1  
		\lambda(S_u^n, x) \,  
		\bigl[ f(\alpha\,x)+f((1-\alpha)\,x)-f(x) \bigr] \, 
		Q(\dif \alpha) \,\nub_u^n(\dif x) \, \dif u
\\
	&\qquad 
	- D\,\int_0^t\int_{\X} f(x)\,\nub_u^n(\dif x) \, \dif u
\end{align*}
hence, according to Lemma \ref{lem.esp}:
\begin{align*}
	\EE | A_{\tau_n+\theta}^n-A_{\tau_n}^n|
	&\leq
	(
		\norme{f'}_\infty\, \bar{g}
		+ 3\, \norme{f}_\infty\,\lambdab
		+ D\,\norme{f}_\infty
	)
	\, C_{t,1}\, \theta\,.
\end{align*}
Using  \eqref{var_qua}, we also have:
\begin{align*}
	\EE | \crochet{Z^n}_{\tau_n+\theta}-\crochet{Z^n}_{\tau_n}|
	\leq 
	\frac{1}{n}\,\left(9 \, \bar \lambda+D \right) \, 
		\norme{f}_\infty^2\, C_{t,1} \, \theta\,.
\end{align*}
Hence
$\EE| A_{\tau_n+\theta}^n-A_{\tau_n}^n|+\EE | \crochet{Z^n}_{\tau_n+\theta}-\crochet{Z^n}_{\tau_n} | 
	\leq  C \, \theta$ 
and we obtain  \fenumii\ from the Markov inequality.

\bigskip

In conclusion, from the Aldous-Rebolledo criteria, the sequence $(\nub^n)_{n\geq 0}$ is tight.

%---------------------------------------------------------------------
\subsubsection*{Step 3: convergence of the sequence $(\bar\nu^n)_{n\in\N}$}
%---------------------------------------------------------------------

To conclude the proof of the theorem it is suffice to show that the sequence $(\bar\nu^n)_{n\in\N}$ has a unique accumulation point and that this point is equal to $\xi$ described in Step 1. In order to characterize $\xi$, the  solution of \eqref{eq.limite.eid.faible}, we introduce, for any given $f \in C^{1}(\X)$, the following function defined for all
$\zeta\in\D([0,T],\M_F(\X))$:
\begin{align}
\nonumber
  \Psi_{t}(\zeta)
  &\eqdef
  \crochet{\zeta_{t},f}-\crochet{\zeta_{0},f}
  -\int_{0}^t \biggl[
		\int_{\X} \rhog(S^\zeta_u,x)\,f'(x) \,	\zeta_u(\dif x)
\\
\nonumber
	&\qquad
	+ \int_{\X} \int_0^1\lambda(S^\zeta_u, x)\,  
	\bigl[f(\alpha\, x)+ f((1-\alpha)\,x)-f(x)\bigr] \, 	 			
			Q(\dif \alpha) \,\zeta_u(\dif x) 
\\
\label{eq.Psi}
	&\qquad
	- D\, \int_{\X} f(x) \,	\zeta_u(\dif x) 
		\biggr]\,\rmd u
\end{align}
where $S_t^\zeta$ is defined by:
\begin{align}
\label{eq.Psi.2}
	S_t^\zeta
	&\eqdef
	S_0
	+
	\int_0^t \Bigl(
		D\,(\Sin-S_u^\zeta)
		-\frac kV \int_\X \rhog(S_u^\zeta,x)\,\zeta_u(\dif x)
	\Bigr) \, \dif u\,.
\end{align}
Hence, if $\Psi_{t}(\zeta)=0$ for all $t\geq 0$ and all $f \in C^{1}(\X)$ then $(S^\zeta,\zeta)=(S,\xi)$ where $(S,\xi)$ is the unique solution of  \eqref{eq.limite.substrat.faible}-\eqref{eq.limite.eid.faible}.

\bigskip

We consider a subsequence $\bar\nu^{n'}$ of $\bar\nu^{n}$ which converges in distribution in the space  $\D([0,T],\M_F(\X))$ and $\tilde \nu$ its limit.

%---------------------------------------------------------------------
\subsubsection*{\it Sub-step 3.1: A.s. continuity of the limit~$\tilde \nu$.}

%------------------------------------------
\begin{lemma}
$\tilde \nu(\omega)\in \CC([0,T],\M_F(\X))$ for  all $\omega\in\Omega$ a.s.
\end{lemma}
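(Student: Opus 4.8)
The plan is to exploit the fact that, after the $1/n$ renormalization, the jumps of $\bar\nu^n$ are of size $O(1/n)$, so that any limit point must have continuous paths. First I would establish the deterministic jump bound. Between two consecutive event times the pair $(S^n_\cdot,\bar\nu^n_\cdot)$ follows the flow $A_\cdot$ of \eqref{eq.substrat}--\eqref{eq.masses}: the number of individuals is frozen and each mass $x^{n,i}_t$ evolves continuously at speed $\rhog(S^n_t,x^{n,i}_t)\le\bar g$ by \eqref{hyp.rhog.borne}, so $t\mapsto\bar\nu^n_t$ is continuous for the weak topology on every inter-jump interval. At a division the measure $\bar\nu^n$ jumps by $\frac1n\bigl(-\delta_x+\delta_{\alpha x}+\delta_{(1-\alpha)x}\bigr)$ and at an up-take by $-\frac1n\delta_x$; hence for any $f$ with $\norme{f}_\infty\le 1$ the increment of $\crochet{\bar\nu^n_\cdot,f}$ at a jump is at most $3/n$ in absolute value, and therefore, since $\X$ is compact (so that $\normm{\cdot}$ metrizes the weak topology on $\M_F(\X)$, cf. Step~1 and the footnote),
\[ \sup_{t\in[0,T]}\normm{\bar\nu^n_t-\bar\nu^n_{t^-}}\ \le\ \frac3n\ \xrightarrow[n\to\infty]{}\ 0. \]
Recall moreover that $\bar\nu^n$ has a.s.\ only finitely many jumps on $[0,T]$ (its jump times are dominated by those of a linear birth--death process), so $\bar\nu^n$ is indeed càdlàg.

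Next I would conclude with the standard fact that a Skorokhod limit of càdlàg paths whose maximal jump tends to $0$ has continuous paths. By the Skorokhod representation theorem (applicable since $\D([0,T],\M_F(\X))$ is Polish) one may assume that the convergence $\bar\nu^{n'}\to\tilde\nu$ in $\D([0,T],\M_F(\X))$ holds almost surely; fix $\omega$ in the corresponding full-probability event. By the definition of the $J_1$ topology there exist increasing homeomorphisms $\lambda_{n'}$ of $[0,T]$, fixing the endpoints, with $\norme{\lambda_{n'}-\mathrm{id}}_\infty\to 0$ and $\zeta^{n'}:=\bar\nu^{n'}\circ\lambda_{n'}\to\tilde\nu$ uniformly on $[0,T]$. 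Each $\zeta^{n'}$ is càdlàg with the same jump sizes as $\bar\nu^{n'}$, hence with maximal jump $\le 3/n'$; and $\tilde\nu$, being a uniform limit of càdlàg maps, is itself càdlàg, with $\tilde\nu_t=\lim_{n'}\zeta^{n'}_t$ and $\tilde\nu_{t^-}=\lim_{n'}\zeta^{n'}_{t^-}$ for every $t$ (uniform convergence commutes with one-sided limits). Consequently
\[ \normm{\tilde\nu_t-\tilde\nu_{t^-}}=\lim_{n'\to\infty}\normm{\zeta^{n'}_t-\zeta^{n'}_{t^-}}\le\lim_{n'\to\infty}\frac3{n'}=0 \]
for every $t\in[0,T]$, so $\tilde\nu(\omega)\in\CC([0,T],\M_F(\X))$. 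Since this holds for a.e.\ $\omega$, the lemma follows.

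There is no deep obstacle here: the only point requiring a little care is the handling of the Skorokhod topology --- the reduction to uniform convergence through the time changes $\lambda_{n'}$ and the exchange of limits in the displayed estimate --- everything else being a one-line jump-size bound. If one prefers to bypass the explicit time-change argument, the same conclusion can be invoked directly from standard results on weak convergence in $\D([0,T],E)$ (to the effect that if $X^n\Rightarrow X$ and $\sup_{t\le T}\rho(X^n_t,X^n_{t^-})\to 0$, then $X$ has a.s.\ continuous sample paths).
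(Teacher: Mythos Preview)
Your proof is correct and follows essentially the same approach as the paper: bound the jump sizes of $\bar\nu^n$ by $O(1/n)$ and conclude that any weak limit in $\D([0,T],\M_F(\X))$ has continuous paths. The only differences are cosmetic --- the paper uses the total variation norm (with a slightly tighter constant) and directly invokes \cite[Th.~10.2, p.~148]{ethier1986a} for the passage to the limit, whereas you spell this step out via Skorokhod representation and the time-change characterization of $J_1$.
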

%------------------------------------------

%------------------------------------------
\begin{proof}
For any  $f\in\C(\X)$ such that $\norme{f}_\infty \leq 1$:
\begin{align*}
	\bigl|
	 	\crochet{\nub_t^{n'},f}-\crochet{\nub_{t^-}^{n'},f}
	\bigr| 
	\leq 
	\frac{1}{n'}\, 
	\bigl| 
		\crochet{\nu_t^{n'},1}-\crochet{\nu_{t^-}^{n'},1}
	\bigr|\,.
\end{align*}
But $| \crochet{\nu_t^{n'},1}-\crochet{\nu_{t^-}^{n'},1}|$ represents 
the difference between the number of individuals in $\nu_t^{n'}$ and in $\nu_{t^-}^{n'}$, which is at most 1. Hence:
\begin{align*}
	\sup_{t \in [0,T]} \, 
	\normtv{\nub_t^{n'}-\nub_{t^-}^{n'}}
	\leq \frac{1}{n'}
\end{align*}
which proves that the limit process $\tilde\nu$ is a.s. continuous
\cite[Th. 10.2 p. 148]{ethier1986a}  as the Prokhorov metric is dominated by the total variation metric.
\end{proof}
\carre
%------------------------------------------

%---------------------------------------------------------------------
\subsubsection*{\it Sub-step 3.2: Continuity of $\zeta\to\Psi_{t}(\zeta)$ in any $\zeta$ continuous.}

%------------------------------------------
\begin{lemma}
\label{lemma.Psi.continue}
For any given  $t\in[0,T]$ and $f\in C^{1}(\X)$, the function  $\Psi_{t}$ defined by
\eqref{eq.Psi} is continuous from  $\DD([0,T],\MM_{F}(\X))$ with values in  $\R$ in any point $\zeta\in\CC([0,T],\MM_{F}(\X))$. 
\end{lemma}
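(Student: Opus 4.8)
The plan is to fix an arbitrary sequence $\zeta^m\to\zeta$ in the (metrizable) space $\DD([0,T],\MM_F(\X))$ equipped with the Skorokhod topology, where $\zeta\in\CC([0,T],\MM_F(\X))$, and to prove $\Psi_t(\zeta^m)\to\Psi_t(\zeta)$. The first ingredient is the standard fact that Skorokhod convergence towards a \emph{continuous} limit implies uniform convergence, so that $\sup_{u\in[0,T]}d(\zeta^m_u,\zeta_u)\to0$ for a metric $d$ compatible with the weak topology on $\MM_F(\X)$. Since $\X$ is compact, this upgrades to $\sup_{u\in[0,T]}\bigl|\crochet{\zeta^m_u-\zeta_u,\varphi}\bigr|\to0$ for every $\varphi\in C(\X)$: this holds first for bounded Lipschitz $\varphi$ and then for general $\varphi$ by uniform approximation, using the uniform mass bound $\bar C\eqdef\sup_{u}\crochet{\zeta_u,1}\vee\sup_m\sup_u\crochet{\zeta^m_u,1}<\infty$, which itself follows from $\sup_u\crochet{\zeta^m_u,1}\to\sup_u\crochet{\zeta_u,1}<\infty$ (apply the previous convergence to $\varphi\equiv1$).

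The second, and most delicate, step is to show that the associated substrate paths converge uniformly, i.e.\ $\sup_{t\in[0,T]}|S^{\zeta^m}_t-S^\zeta_t|\to0$. Subtracting the two instances of \eqref{eq.Psi.2} and using the decomposition
\[
   \crochet{\zeta^m_u,\rhog(S^{\zeta^m}_u,\cdot)}-\crochet{\zeta_u,\rhog(S^\zeta_u,\cdot)}
   =\crochet{\zeta^m_u,\rhog(S^{\zeta^m}_u,\cdot)-\rhog(S^\zeta_u,\cdot)}+\crochet{\zeta^m_u-\zeta_u,\rhog(S^\zeta_u,\cdot)},
\]
the first term on the right is at most $k_g\,\bar C\,|S^{\zeta^m}_u-S^\zeta_u|$ by \eqref{hyp.rhog.lipschitz} and \eqref{hyp.rhog.borne}, so Gronwall's inequality yields
\[
   \sup_{t\in[0,T]}|S^{\zeta^m}_t-S^\zeta_t|\leq\frac kV\,e^{(D+\frac kV k_g\bar C)T}\int_0^T\bigl|\crochet{\zeta^m_u-\zeta_u,\rhog(S^\zeta_u,\cdot)}\bigr|\,\dif u .
\]
The integrand is bounded by $2\,\bar g\,\bar C$ and, for each fixed $u$, tends to $0$ since $x\mapsto\rhog(S^\zeta_u,x)$ is bounded and continuous (using continuity of $\rhog$ in the mass variable) and $\zeta^m_u\to\zeta_u$ weakly; dominated convergence then kills the right-hand side.

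It remains to let $m\to\infty$ in \eqref{eq.Psi}. The endpoint terms $\crochet{\zeta^m_t,f}$ and $\crochet{\zeta^m_0,f}$ converge to $\crochet{\zeta_t,f}$ and $\crochet{\zeta_0,f}$ because $f\in C^1(\X)\subset C(\X)$. Each of the three contributions to the time integral has the form $\int_0^t\crochet{\zeta^m_u,h^m_u}\,\dif u$ with $h^m_u$ equal, respectively, to $\rhog(S^{\zeta^m}_u,\cdot)\,f'$, to $x\mapsto\int_0^1\lambda(S^{\zeta^m}_u,x)\,[f(\alpha x)+f((1-\alpha)x)-f(x)]\,Q(\dif\alpha)$, and to $-D\,f$; writing $\crochet{\zeta^m_u,h^m_u}-\crochet{\zeta_u,h_u}=\crochet{\zeta^m_u,h^m_u-h_u}+\crochet{\zeta^m_u-\zeta_u,h_u}$, the first piece is bounded by $\bar C\,\|h^m_u-h_u\|_\infty$, which tends to $0$ uniformly in $u$ by the second step together with the Lipschitz bounds \eqref{hyp.rhog.lipschitz} and \eqref{hyp.lambda.lipschitz}, while the second piece is bounded uniformly in $(m,u)$ and tends to $0$ for every fixed $u$, since $h_u$ is bounded and continuous in $x$ and $\zeta^m_u\to\zeta_u$ weakly. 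Dominated convergence on $[0,t]$ then gives $\Psi_t(\zeta^m)\to\Psi_t(\zeta)$, which is the claim.

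I expect the main obstacle to be the second step, namely the uniform control of $S^{\zeta^m}-S^\zeta$: everything there hinges on the upgrade from Skorokhod convergence (to a continuous limit) to genuine uniform convergence and on the resulting uniform mass bound $\bar C$. Once these are in place, the remaining estimates are routine applications of the Lipschitz and boundedness hypotheses on $\rhog$ and $\lambda$ combined with dominated convergence.
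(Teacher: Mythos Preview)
Your argument is correct and follows the same overall strategy as the paper: upgrade Skorokhod convergence (to a continuous limit) to uniform convergence in the Prokhorov metric, obtain uniform control of the substrate path via Gronwall, and then pass to the limit in each term of $\Psi_t$. The difference lies in how the last step is executed. The paper asserts directly that
\[
  |\Psi_t(\zeta^{n})-\Psi_t(\zeta)|
  \;\leq\;
  C\sup_{0\leq u\leq T}|\crochet{\zeta^{n}_u-\zeta_u,1}|
  \;\leq\;
  C\sup_{0\leq u\leq T}\dpr(\zeta^{n}_u,\zeta_u)\,,
\]
but the first inequality is not justified as written: a quantity such as $|\crochet{\zeta^{n}_t-\zeta_t,f}|$ cannot in general be controlled by the total-mass difference $|\crochet{\zeta^{n}_t-\zeta_t,1}|$, since two measures of equal mass may integrate a nonconstant test function differently. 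Your decomposition $\crochet{\zeta^m_u,h^m_u}-\crochet{\zeta_u,h_u}=\crochet{\zeta^m_u,h^m_u-h_u}+\crochet{\zeta^m_u-\zeta_u,h_u}$, handled by the Lipschitz bounds on $\rhog,\lambda$ for the first piece and by weak convergence of $\zeta^m_u$ against the continuous function $h_u$ plus dominated convergence in $u$ for the second, is the clean way to close the argument. It tacitly uses continuity of $\rhog$ and $\lambda$ in the mass variable, which is in any case needed for the ODE flow to be well posed. In short, your route is slightly less quantitative than the paper's intended bound but more transparently rigorous, and it in fact repairs a gap in the paper's presentation.
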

%------------------------------------------

%------------------------------------------
\proof
Consider a sequence  $(\zeta^{n})_{n\in\N}$ which converges towards  $\zeta$ in
$\DD([0,T],\MM_{F}(\X))$ with respect to the Skorohod topology. As the limit $\zeta$ is continuous we have that  $\zeta^{n}$ converges to $\zeta$ with the uniform topology:
\begin{align}
\label{eq.lemma.Psi.continue.a}
   \sup_{0\leq t\leq T} 
   \dpr({\zeta_t^{n},\zeta_{t}})
   \cv_{n\to\infty} 0
\end{align}
where $\dpr$ is the Prokhorov metric (see Appendix \ref{appendix.skorohod}).

The functions $\lambda(s,x)$ and $\rhog(s,x)$ are Lipschitz continuous functions w.r.t. $s$ uniformly in $x$ and also bounded, see \eqref{hyp.lambda.lipschitz} and \eqref{hyp.rhog.lipschitz}, so from \eqref{eq.Psi.2} we can easily check that:
\begin{align*}
	|S_t^{\zeta^{n}}-S_t ^\zeta|
	&\leq
	C\,\int_0^t \Bigl(
		|S_u^{\zeta^{n}}-S_u ^\zeta|
		+ \Bigl|
     		\int_\X \rhog(S_u^{\zeta^{n}},x)\,
				[\zeta^{n}_u(\dif x)-\zeta_u(\dif x)]
\\
	&\qquad\qquad\qquad\qquad\qquad\qquad
     		-
			\int_\X [\rhog(S_u^\zeta,x)-\rhog(S_u^{\zeta^{n}},x)]
			        \,\zeta_u(\dif x)
		\Bigr|
	\Bigr) \, \dif u
\\ %--------
	&\leq
	C\,\int_0^t \Bigl(
		|S_u^{\zeta^{n}}-S_u ^\zeta|
		+ |\crochet{\zeta^{n}_u-\zeta_u,1}|\Bigr) \, \dif u
\end{align*}
and the Gronwall's inequality leads to:
\begin{align}
\label {eq.lemma.Psi.continue.b} 
  |S_t^{\zeta^{n}}-S_t ^\zeta|
  \leq 
  C\,\int_{0}^t |\crochet{\zeta^n_{u}-\zeta_{u},1}|\,\rmd u\,.
\end{align}
Here and in the rest of the proof the constant $ C $ will depend only on $ T $, $ f $ and on the parameters of the models.
Hence, from \eqref{eq.Psi}:
\begin{align*}
	|\Psi_t(\zeta^{n})-\Psi_t(\zeta)|
	&\leq 
	C\,
	\Bigl[
	  	|\crochet{\zeta_t^{n}- \zeta_t,1}|
		+
		|\crochet{\zeta_0^{n}- \zeta_0,1}|
\\
   &\qquad\qquad\qquad
		+
		\int_0^t |S_u^{\zeta^{n}}-S_u ^\zeta| \, \dif u
		+
		\int_0^t |\crochet{\zeta_u^{n}-\zeta_u,1}| \, \dif u
	\Bigr]
\\
   &\leq
		C\,\sup_{0\leq t\leq T}|\crochet{\zeta_t^{n}-\zeta_t,1}|\,.
\end{align*}
Let $\delta_{t}=\dpr(\zeta_t^{n},\zeta_t)$, by definition of the Prokhorov metric:
\begin{align*}
  \zeta_t^{n}(\XX)-\zeta_t(\XX^{\delta_{t}})&\leq \delta_{t}\,,
  &
  \zeta_t(\XX)-\zeta_t^{n}(\XX^{\delta_{t}})&\leq \delta_{t}\,,
\end{align*}
but $\XX^{\delta_{t}}=\XX$ hence $|\zeta_t^{n}(\XX)-\zeta_t(\XX)|\leq \delta_{t}$. Note finally that $|\zeta_t^{n}(\XX)-\zeta_t(\XX)| = |\crochet{\zeta_t^{n}-\zeta_t,1}|$, so we get:
\[
   |\Psi_t(\zeta^{n})-\Psi_t(\zeta)| 
   \leq C\,
   \sup_{0\leq t\leq T}\dpr({\zeta_t^{n},\zeta_t})
\]
which tends to zero.
\carre
%------------------------------------------

%{\color{red} D'après Jacod: $\delta_{lu}(\zeta^n,\zeta)\to 0$ où
%\[
%  \delta_{lu}(\zeta^n,\zeta)
%  \eqdef
%  \sum_{N\in\N^*}2^{-N}\,\Bigl(1\wedge \sup_{t\leq N}|\zeta^n(t)-\zeta(t)|\Bigr)
%\]
%c-à-d pour tout $\epsilon>0$, $\exists n_{0}$ t.q. $\forall n\geq n_{0}$
%\[
%  \sum_{N\in\N^*}2^{-N}\,\Bigl(1\wedge \sup_{t\leq N}|\zeta^n(t)-\zeta(t)|\Bigr)
%  \leq \epsilon
%\]
%mais on prend $N_{0}-1<T\leq N_{0}$ alors
%\[
%  1\wedge \sup_{t\leq T}|\zeta^n(t)-\zeta(t)|  \leq \epsilon\,2^{N_{0}}
%\]
%et on prend $\epsilon $ suffisamment petit pour que $\epsilon\,2^{N_{0}}<1$, donc
%\[
%  \sup_{t\leq T}|\zeta^n(t)-\zeta(t)|  \leq \epsilon\,2^{N_{0}}
%\]
%on a donc que $\sup_{t\leq T}|\zeta^n(t)-\zeta(t)|\to 0$. En fait dans ces conditions, on a que les sauts de $\zeta^n$ (qui sont en nombre au plus dénombrable) tendent uniformément vers 0.
%}
%\bigskip

%---------------------------------------------------------------------
\subsubsection*{\it Sub-step 3.3: Convergence in distribution  of $\Psi_{t}(\bar\nu^{n'})$ to $\Psi_{t}(\tilde\nu)$.}

The sequence  $\bar\nu^{n'}$ converges in distribution to  $\tilde\nu$ and $\tilde\nu(\omega)\in\CC([0,T],\MM_{F}(\XX))$;
moreover the application  $\Psi_{t}$ is continuous in any point of $\CC([0,T],\MM_{F}(\XX))$, thus according to the continuous mapping theorem \cite[Th. 2.7 p. 21]{billingsley1968a} we get:
\begin{align} 
\label{eq.cv.loi.Psi}
  \Psi_{t}(\bar\nu^{n'})
  \xrightarrow[n\to\infty]{\textrm{loi}}
  \Psi_{t}(\tilde\nu)\,.
\end{align}

%---------------------------------------------------------------------
\subsubsection*{\it Sub-step 3.4: $\tilde\nu=\xi$ a.s.}

From \eqref{renormalisation}, for any $n\geq 0$ we have:
\begin{align*}
	\Psi_t(\nub^n) =  Z_t^{f,n}
\end{align*}
where $Z_t^{f,n}$ is defined by  \eqref{eq.Zfn}. Also, \eqref{var_qua} gives:
\begin{align*}
	\EE(|Z_t^{f,n}|^2)
	&= 
	\EE \crochet{Z^{f,n}}_t 
	\leq \frac{1}{n}\,(9\,\bar\lambda+D) \, \norm{f}_{\infty}^2\,C_{t,1} \, t\,.
\end{align*}
Hence $\Psi_{t}(\nub^n)$ converges to $0$ in $L^2$ but also in $L^1$. 
Furthermore, we easily show that:
\begin{align*}
|\Psi_{t}(\zeta)|
	\leq & C_{f,t} \sup_{0 \leq u \leq t} \crochet{\zeta_u,1}
\end{align*}
moreover, from Lemma \ref{lem.esp}, $(\Psi_{t}(\nub^{n'}))_{n'}$ is uniformly
integrable. The dominated convergence theorem and \eqref{eq.cv.loi.Psi} imply:
\[
  0
  = \lim_{n' \to \infty} \EE |\Psi_t(\nub^{n'})|
  = \EE |\Psi_t(\tilde \nu)|\,.
\]
So $\Psi_t(\tilde \nu)=0$ a.s. and $\tilde \nu$ is a.s. equal to $\xi$ where $(S,\xi)$ 
is the unique solution of \eqref{eq.limite.substrat.faible}-\eqref{eq.limite.eid.faible}.

\bigskip

This last step concludes the proof of Theorem \ref{theo.cv.ibm}.

%%%%%%%%%%%%%%%%%%%%%%%%%%%%%%%%%%%%%%%%%%%%%%%%%%%%%%%%%%%%%%%%%%%%%%
\subsection{Links with deterministic models}
\label{subsec.modeles.deterministes}
%%%%%%%%%%%%%%%%%%%%%%%%%%%%%%%%%%%%%%%%%%%%%%%%%%%%%%%%%%%%%%%%%%%%%%

Equation \eqref{eq.limite.eid.faible}  is actually a weak version of an 
integro-differential equation that can be easily identified.
Indeed suppose that the solution $\xi_t$ of Equation \eqref{eq.limite.eid.faible}
admits a density $p_t(x)\,\rmd x=\xi_{t}(\rmd x)$, and that $Q(\rmd \alpha)=q(\alpha)\,\rmd \alpha$, then the system of equations \eqref{eq.limite.substrat.faible}-\eqref{eq.limite.eid.faible} is a weak version of the following system:
\begin{align} 
\label{eq.limite.substrat.fort}
	&
	\frac{\rmd}{\rmd t} S_t  = 
	D\,(\Sin-S_t)-\frac kV \int_\X \rhog(S_t,x)\,p_t(x)\,\dif x\,,
\\
\nonumber
	&
	\frac{\partial}{\partial t} p_t(x)
	+\frac{\partial}{\partial x} \bigl( \rhog(S_t,x)\,p_t(x)\bigr)
	+ \bigl(\lambda(S_t, x)+D \bigr)\,p_t(x)
\\
\label{eq.limite.eid.fort}
  	&\qquad\qquad\qquad\qquad\qquad\qquad\qquad\qquad
	=  
	2\,\int_\X 
	\frac{\lambda(S_t, z)}{z}\,q\left(\frac xz \right)\,p_t(z)\,\dif z\,.
\end{align}
In fact, this is the population balance equation  introduced by \cite{fredrickson1967a} and \cite{ramkrishna1979a} for growth-fragmentation models.

\bigskip

It is easy to link  the model \eqref{eq.limite.substrat.fort}-\eqref{eq.limite.eid.fort}
to the classic chemostat model. Indeed suppose that
the growth function $x \mapsto \rhog(s,x)$ is proportional to $ x $, i.e.:
\[
  \rhog(s,x)=\tilde\mu(s)\,x\,.
\]
The results presented now are formal insofar as a linear growth function 
does not verify the assumptions made in this article.
We introduce the bacterial concentration:
\[
  Y_t \eqdef \frac1V \,\int_\X x\, p_t(x) \, \dif x\,.
\] 
As $\sup_{0\leq t\leq T}\crochet{p_{t},1}<\infty$, from \eqref{eq.limite.eid.fort}:
\begin{multline*}
	\frac{\dif}{\dif t} Y_t
	- \frac1V \int_\X x \, 
			\frac{\partial}{\partial x}\bigl( \rhog(S_t,x)\,p_t(x)\bigr) \, \dif x
	+ \frac1V \int_\X x \, \lambda(S_t, x) \, p_t(x)\,\dif x
	+ D\, Y_t
\\
	= \frac2V\, \int_\X x \, 
				\int_\X 
				\frac{\lambda(S_t, z)}{z}\,q(x/z)\,p_t(z)\,
				\dif z \, \dif x\,,
\end{multline*}
but
\begin{align*}
&\int_\X x \, 
	\int_\X 
		\frac{\lambda(S_t, z)}{z}\,
		q(x/z)\,p_t(z)\,\dif z \, \dif x
		= 		
		\int_\X \int_0^1			 
			z\, \lambda(S_t,z)\, \alpha \,q\left(\alpha \right)\,p_t(z)\,\dif \alpha \, \dif z
\\
		&\qquad\qquad =  \int_\X \int_0^1			 
			z\, \lambda(S_t, z)\, \alpha \,q\left(1-\alpha \right)\,p_t(z)\,\dif \alpha \, \dif z
			\tag{by symmetry of $q$}
\\
		&\qquad\qquad =	\int_\X \int_0^1			 
			z\, \lambda(S_t, z)\, (1-\alpha) \,q\left(\alpha \right)\,p_t(z)\,\dif \alpha \, \dif z
\\
		&\qquad\qquad = - \int_\X \int_0^1			 
			z\, \lambda(S_t, z)\, \alpha \,q\left(\alpha \right)\,p_t(z)\,\dif \alpha \, \dif z
		+ \int_\X 			 
			z\, \lambda(S_t, z)\,p_t(z)\, \dif z
\end{align*}
thus:
\begin{align*}
 2\, \int_\X x \, 
	\int_\X 
	\frac{\lambda(S_t, z)}{z}\,q(x/z)\,p_t(z)\,\dif z \, \dif x
		= \int_\X 			 
			z\, \lambda(S_t, z)\,p_t(z)\, \dif z.
\end{align*}
The function $x\mapsto p_t(x)$ is the population density at time $t$. On the one hand $p_0(x)$ has compact support. On the other hand the growth of each bacterium is defined by a differential equation whose right-hand side is bounded by a linear function in $x$, uniformly in $s$. Hence for all $t\leq T$, we can uniformly bound the mass of all the bacteria and $p_{t}(x)$ has a compact support, i.e. there exists $\mmax$ such that the support of $p_{t}(x)$ is included in $[0,\mmax]$ with $p_{t}(\mmax)=0$, so we choose $\X=[0,\mmax]$. Moreover $\rhog(S_t,0)=0$ hence:
\begin{align*}
	\int_\X 
		x \, \frac{\partial}{\partial x}\bigl( \rhog(S_t,x)\,p_t(x)\bigr) 
	\, \dif x
	= 
	-\int_\X  \rhog(S_t,x)\,p_t(x) \, \dif x.
\end{align*}
Finally:
\begin{align*}
	\frac{\dif}{\dif t} Y_t 
	& = 
		\frac1V \int_\X  \rhog(S_t,x)\,p_t(x) \, \dif x
		-D\, Y_t
	= \tilde\mu(S_{t})\,Y_{t} -D\, Y_t\,.
\end{align*}
We deduce that the concentrations $(Y_{t},S_{t})_{t\geq 0}$  of biomass and substrate are  the solution of the following closed system of ordinary differential equations:
\begin{equation}
\label{eq.chemostat.edo}
	\begin{split}
	\dot Y_t 	& = \bigl(\tilde\mu(S_t)-D \bigr)\, Y_t\,,
	\\
	\dot S_t	& = D\,(\Sin-S_t)-k \,\tilde\mu(S_t)\, Y_t\,.
	\end{split}
\end{equation}
which is none other than the classic  chemostat equation \citep{smith1995a}.

%%%%%%%%%%%%%%%%%%%%%%%%%%%%%%%%%%%%%%%%%%%%%%%%%%%%%%%%%%%%%%%%%%%%%%
%%%%%%%%%%%%%%%%%%%%%%%%%%%%%%%%%%%%%%%%%%%%%%%%%%%%%%%%%%%%%%%%%%%%%%
\section{Simulations}
\label{sec.simulations}
%%%%%%%%%%%%%%%%%%%%%%%%%%%%%%%%%%%%%%%%%%%%%%%%%%%%%%%%%%%%%%%%%%%%%%
%%%%%%%%%%%%%%%%%%%%%%%%%%%%%%%%%%%%%%%%%%%%%%%%%%%%%%%%%%%%%%%%%%%%%%

In this section we compare the behavior of the individual-based model (IBM) and two deterministic models: the integro-differential equation (IDE) \eqref{eq.limite.substrat.fort}-\eqref{eq.limite.eid.fort} and  classic chemostat model, represented by the ordinary differential equation \eqref{eq.chemostat.edo} (ODE).
Simulations of the IBM were performed following Algorithm \ref{algo.ibm}.
The resolution of the integro-differential equation was made following the numerical scheme given in Appendix \ref{appendix.schema.num}, with a discretization step in the mass space  of $\Delta x = 2 \times 10^{-7}$ and a discretization step in time of  $\Delta t = 5 \times 10^{-4}$.

%%%%%%%%%%%%%%%%%%%%%%%%%%%%%%%%%%%%%%%%%%%%%%%%%%%%%%%%%%%%%%%%%%%%%%
\subsection{Simulation parameters}
%%%%%%%%%%%%%%%%%%%%%%%%%%%%%%%%%%%%%%%%%%%%%%%%%%%%%%%%%%%%%%%%%%%%%%

In the simulations proposed in this section, the division rate of an individual is given by the following function:
\begin{align*}
	\lambda(s,x)
	& =
	\frac{\bar\lambda}{\log \bigl((\mmax-\mdiv) \, p_\lambda +1 \bigr)} 
	\,
	\log \bigl((x-\mdiv) \, p_\lambda +1 \bigr) \, 1_{\{x \geq \mdiv\}}
\end{align*}
which does not depend on the substrate concentration.

The  division kernel $Q(\dif \alpha)=q(\alpha)\,\dif \alpha$ is given by a symmetric beta distribution:
\begin{align*}
	q(\alpha)
	& = 
	\frac{1}{B(p_\beta)}\,
		\bigl( \alpha \,(1-\alpha) \bigr)^{p_\beta-1}
\end{align*}
where $B(p_\beta)
	= \int_0^1 \bigl( \alpha \,(1-\alpha) \bigr)^{p_\beta-1} \, \dif \alpha$
is a normalizing constant.

Individual growth follows a Gompertz model, with a growth rate depending on the substrate concentration:
\begin{align*}
	g(s,x)
	& = 
	\rmax\,\frac{s}{k_r+s} \,
	\log\Big(\frac{\mmax}{x}\Big)\,x\,.
\end{align*}
The masses of individuals at the initial time are sampled according to the following probability density function:
\begin{align}
\label{eq.d}
d(x)
	& =	
		\Biggl(
			\frac{x-0.0005}{0.00025}
			\,\left(1-\frac{x-0.0005}{0.00025}\right)
		\Biggr)^5 \,
		1_{\{0.0005 < x < 0.00075\}}\,.
\end{align}
This initial density will show a transient phenomenon that cannot be reproduced by the classical chemostat model described in terms of ordinary differential equations \eqref{eq.chemostat.edo}, see Figure \ref{fig.edo.ibm.eid}.

The simulations were performed using the parameters in Table  \ref{table.parametres}. The parameters $V$, $N_0$ and $D$ will be specified for each simulation.

%-----------------------------------
\begin{table}[h]
\begin{center}
\begin{tabular}{|c|c|}
	\hline
    Parameters & Values \\
    \hline
    $S_0$				&	5 mg/l \\
    $\Sin$				&	10 mg/l \\
    $\mmax$				&	0.001 mg \\	
    $\mdiv$				&	0.0004 mg \\
	$\bar\lambda$		&	1 h$^{-1}$\\
    $p_\lambda$			&	1000 \\
    $p_\beta$			&	7 \\
    $\rmax$				&	1 h$^{-1}$\\
    $k_r$				&	10 mg/l\\
    $k$					&	1\\
    \hline
\end{tabular}
\end{center}
\caption{Simulation parameters.}
\label{table.parametres}
\end{table}
%-----------------------------------

%%%%%%%%%%%%%%%%%%%%%%%%%%%%%%%%%%%%%%%%%%%%%%%%%%%%%%%%%%%%%%%%%%%%%%
\subsection{Comparison of the IBM and the IDE}
%%%%%%%%%%%%%%%%%%%%%%%%%%%%%%%%%%%%%%%%%%%%%%%%%%%%%%%%%%%%%%%%%%%%%%

To illustrate the convergence in large population asymptotic of the IBM to the IDE, we performed simulations at different levels of population size. To this end we vary the volume of the chemostat and the number of individuals at the initial time. We considered three cases:
\begin{enumerate}
\item small size: $V=0.05$ l and $N_0=100$,
\item medium size: $V=0.5$ l and $N_0=1000$,
\item large size: $V=5$ l and $N_0=10000$.
\end{enumerate} 
In each of these three cases we simulate:
\begin{itemize}
\item 60 independent runs of the IBM;
\item the numerical approximation of \eqref{eq.limite.substrat.fort}-\eqref{eq.limite.eid.fort} using the finite difference schemes 
detailed in Appendix \ref{appendix.schema.num}
\end{itemize}
with the same initial biomass concentration distribution.

\bigskip

The convergence of IBM to EID is clearly illustrated
in Figure \ref{evol.taille.concentrations}  where the evolutions of the population size, of the biomass concentration, and of the substrate concentration are represented.

In Figure \ref{fig.evol.eid}  the time evolution of the normalized mass distribution is depicted, i.e. the normalized solution of the IDE \eqref{eq.limite.eid.fort}. We have represented the simulation until time $T=10$ (h) to illustrate the transient phenomenon due to the choice of the initial distribution \eqref{eq.d}: after a few time iterations this  distribution is bimodal; the upper mode (large mass) grows in mass and disappears   before time $T=10$ (h). The lower mode (small mass) corresponds to the mass of the bacteria resulting from the division; the upper mode corresponds to the mass of the bacteria from the initial bacteria before their division.  Thus, the upper mode is set to disappear quickly by division or by up-take. The IBM realizes this phenomenon, see
Figure \ref{repartition.masse}. In contrast, the classical chemostat model presented below, see Equation \eqref{eq.chemostat.edo}, cannot account for this phenomenon.

Figure \ref{repartition.masse} presents this  normalized mass distribution at three different instants, $t=1,\,4,\, 80$ (h),  and the simulation of the IDE is compared 
to 60 independent runs of the IBM, again for the three levels of population sizes described above. Depending on whether the population is large, medium or small, 
we needed to adapt the number of bins of the histograms so that the resulting graphics are clear. The convergence of the IBM solution to the IDE in large population limit can be observed.

In conclusion, the IBM converges in large population limit to the IDE and variability ``around'' the asymptotic model is relatively large in small or medium population size; note that there is no reason why the IDE represents the mean value of the IBM.

%---------------------------------------------
\begin{figure}
\begin{center}
\begin{tabular}{ccc}
\includegraphics[width=4.5cm]{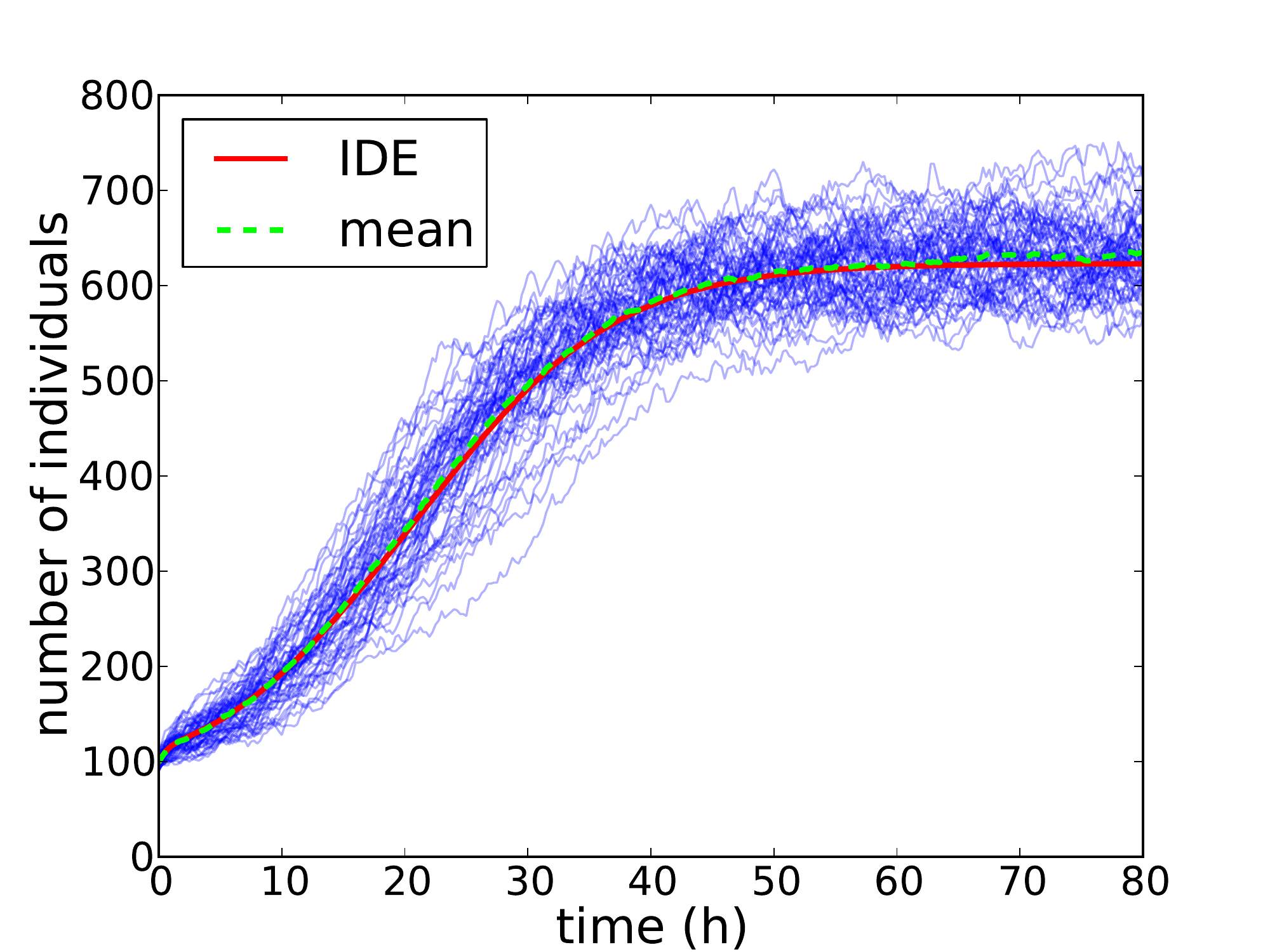}
&
\includegraphics[width=4.5cm]{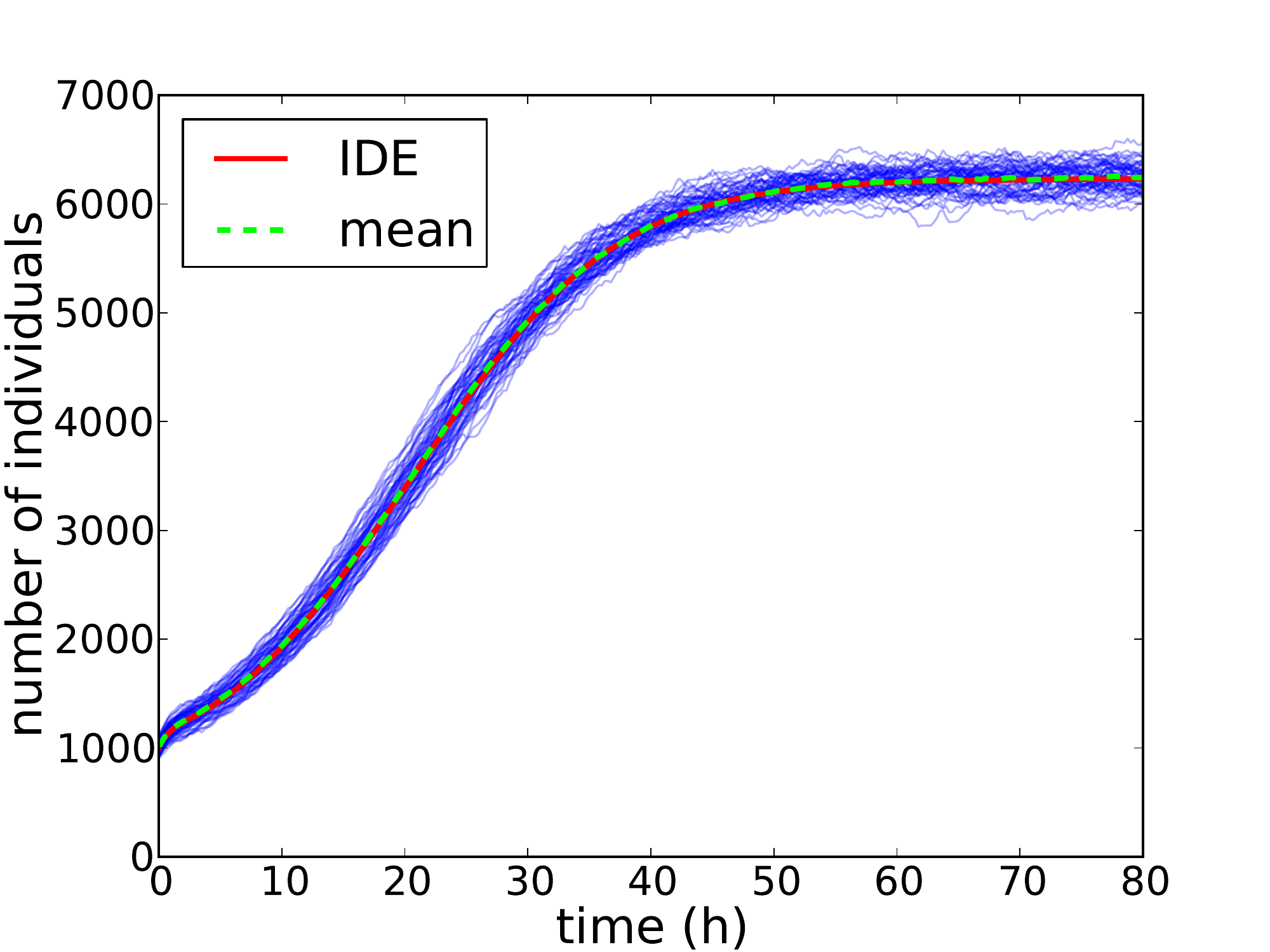}
&
\includegraphics[width=4.5cm]{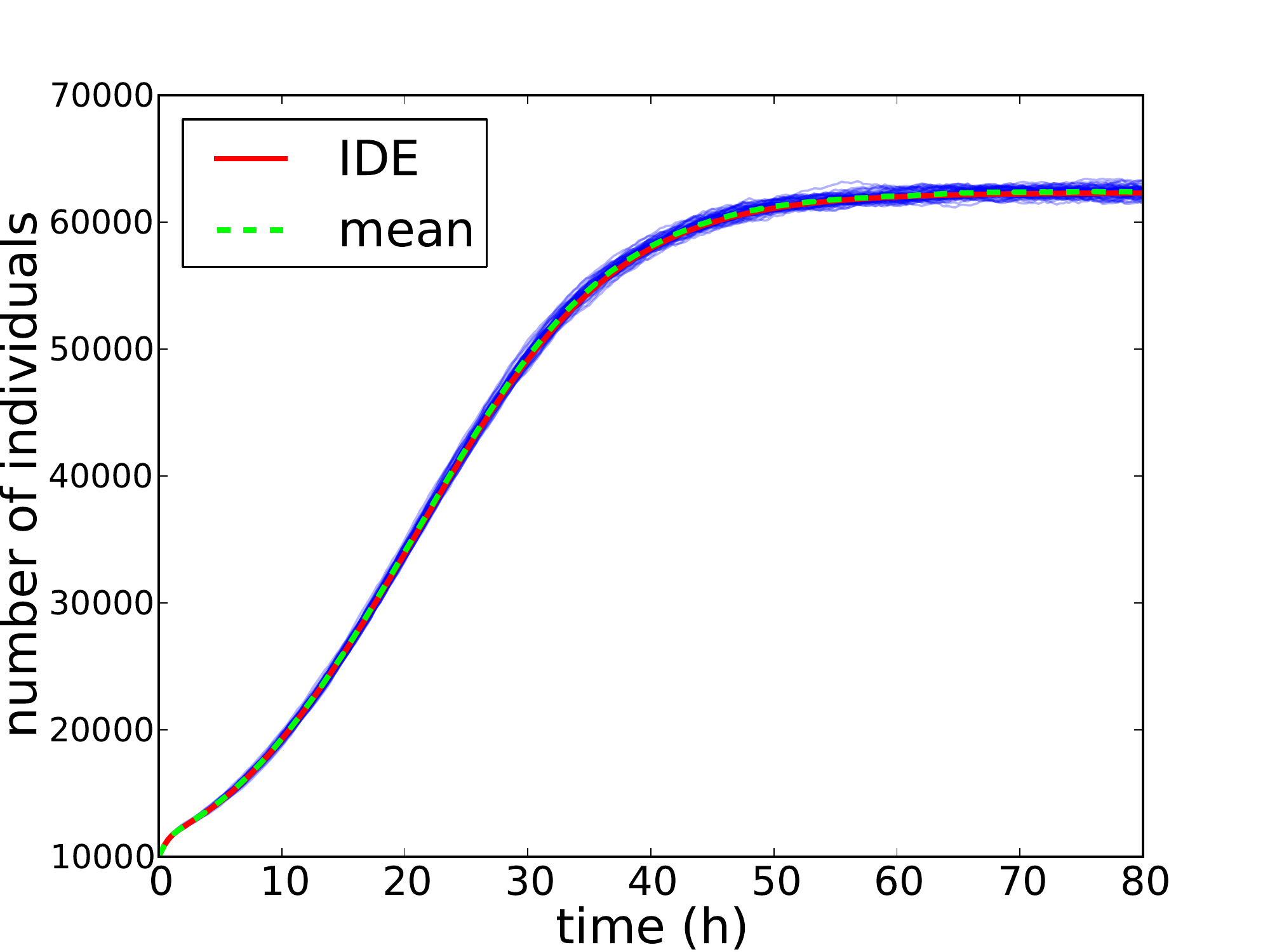}
\\ %-------
\includegraphics[width=4.5cm]{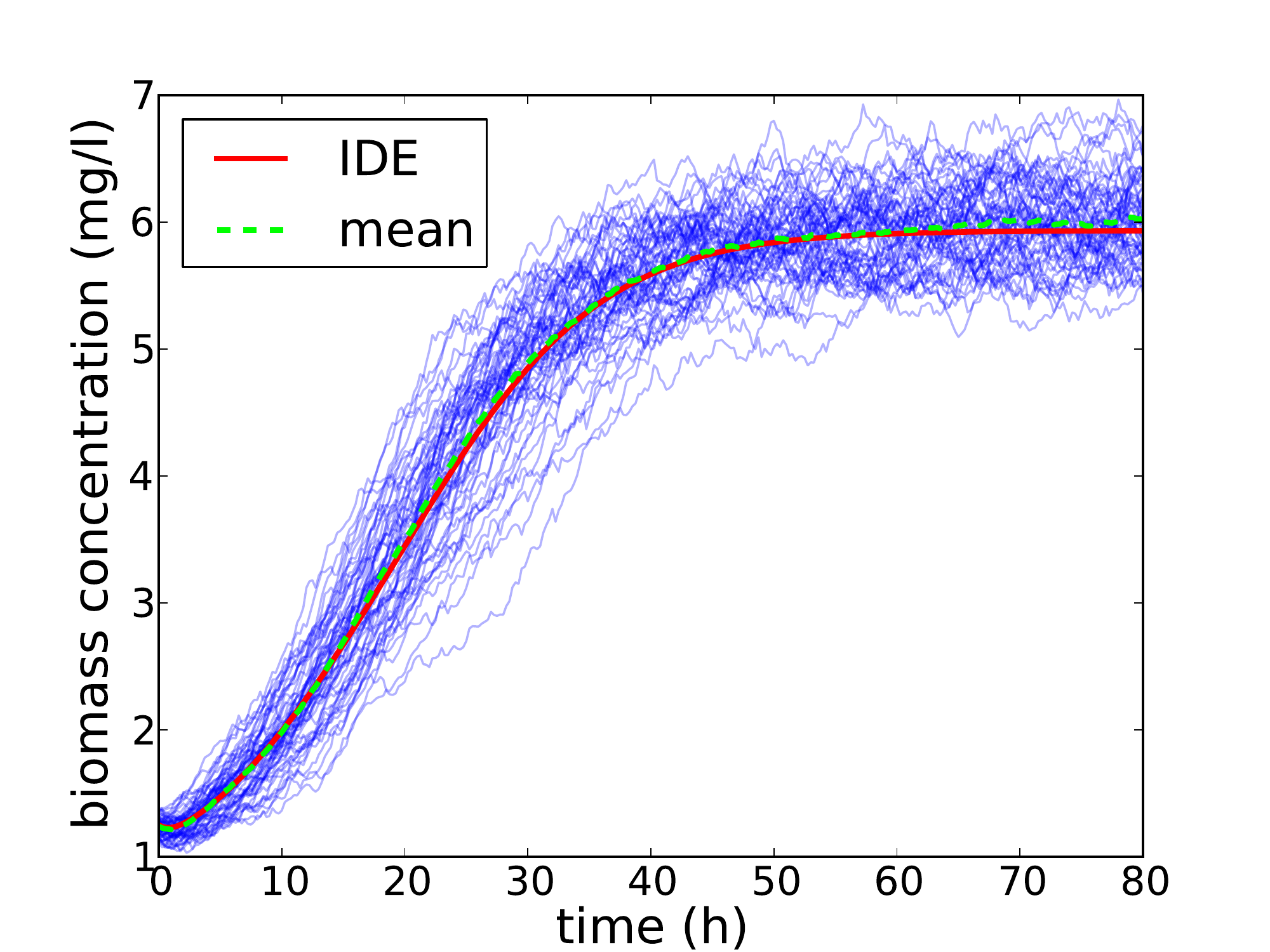}
&
\includegraphics[width=4.5cm]{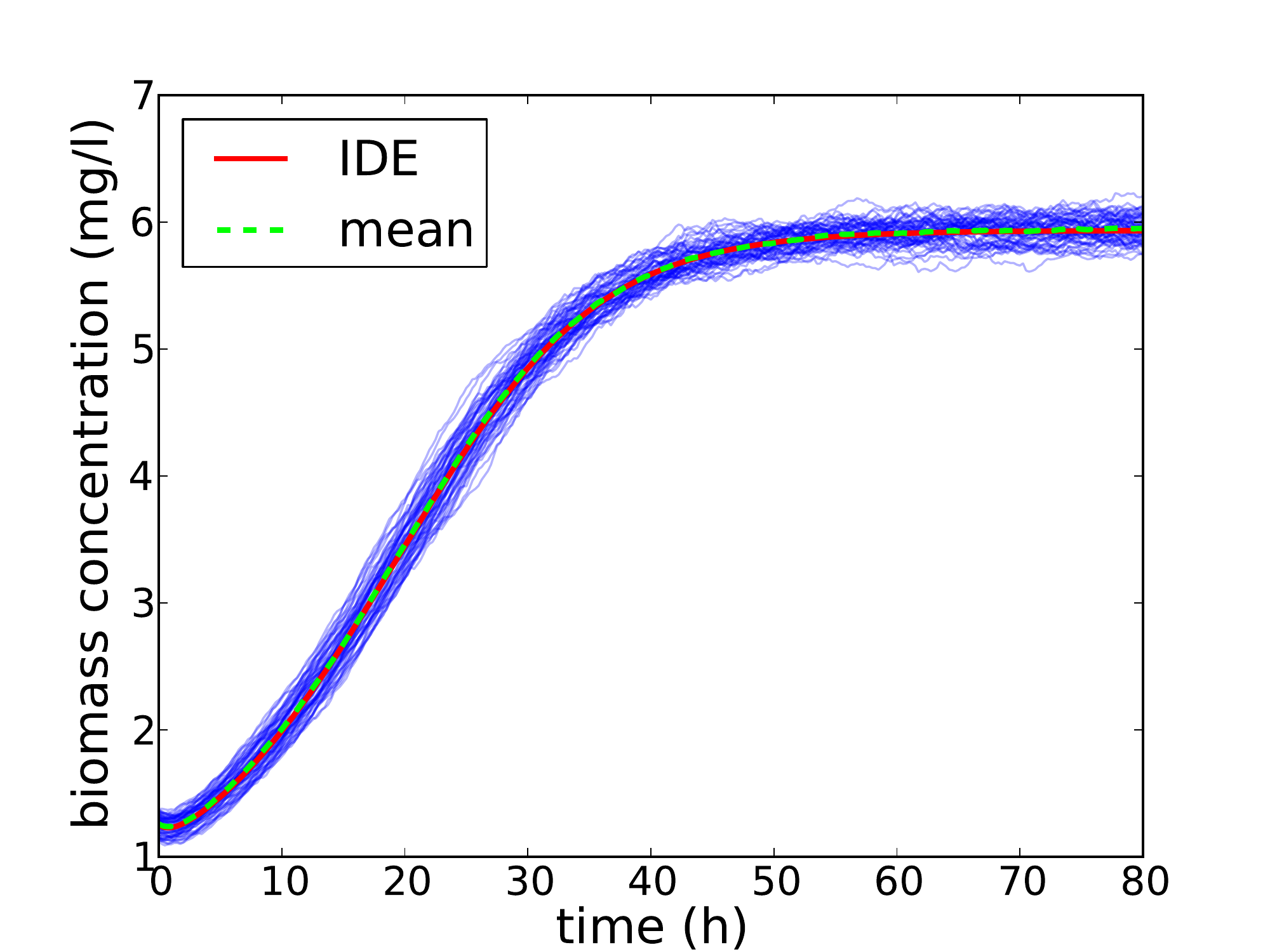}
&
\includegraphics[width=4.5cm]{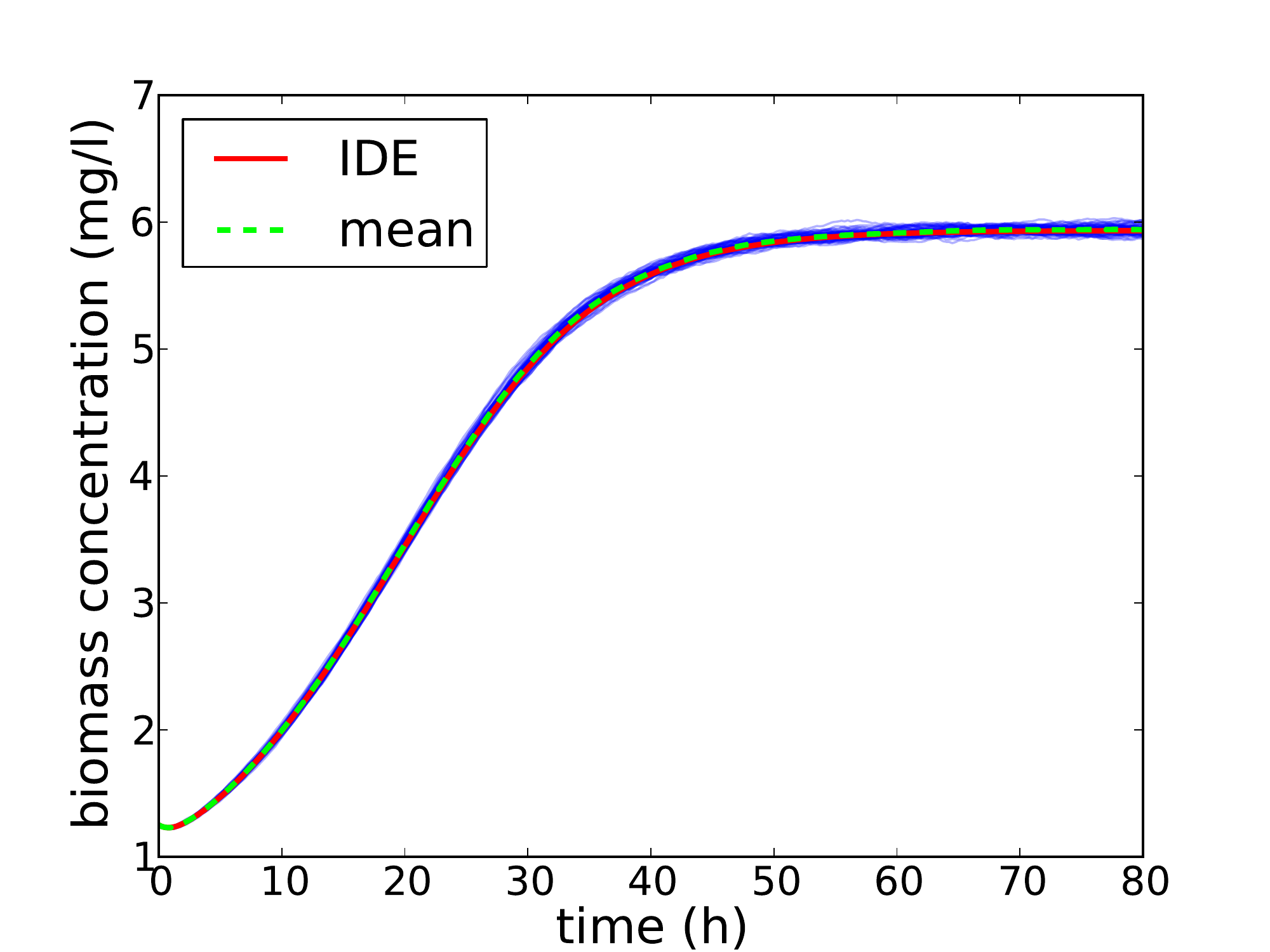}
\\ %-------
\includegraphics[width=4.5cm]{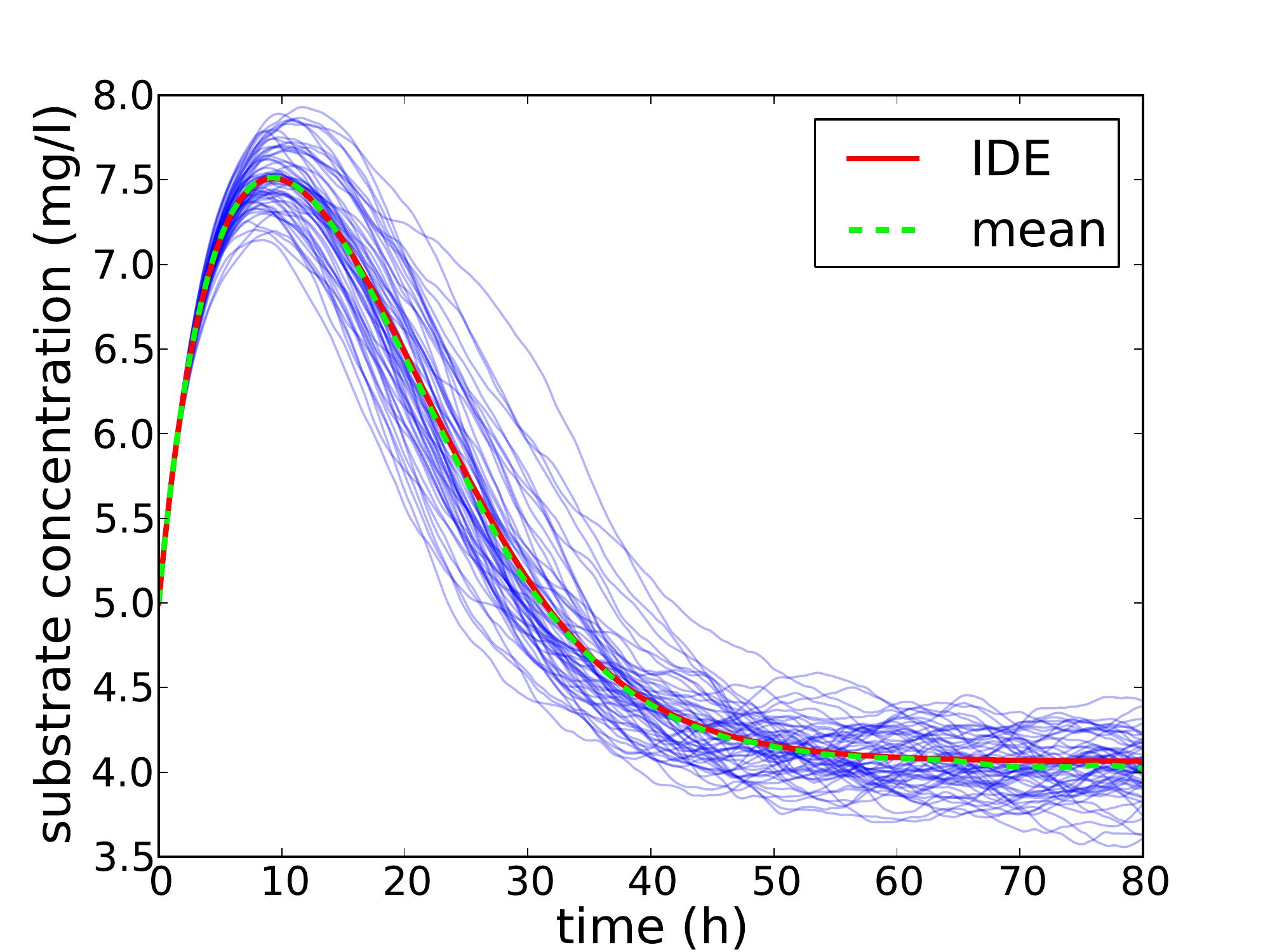}
&
\includegraphics[width=4.5cm]{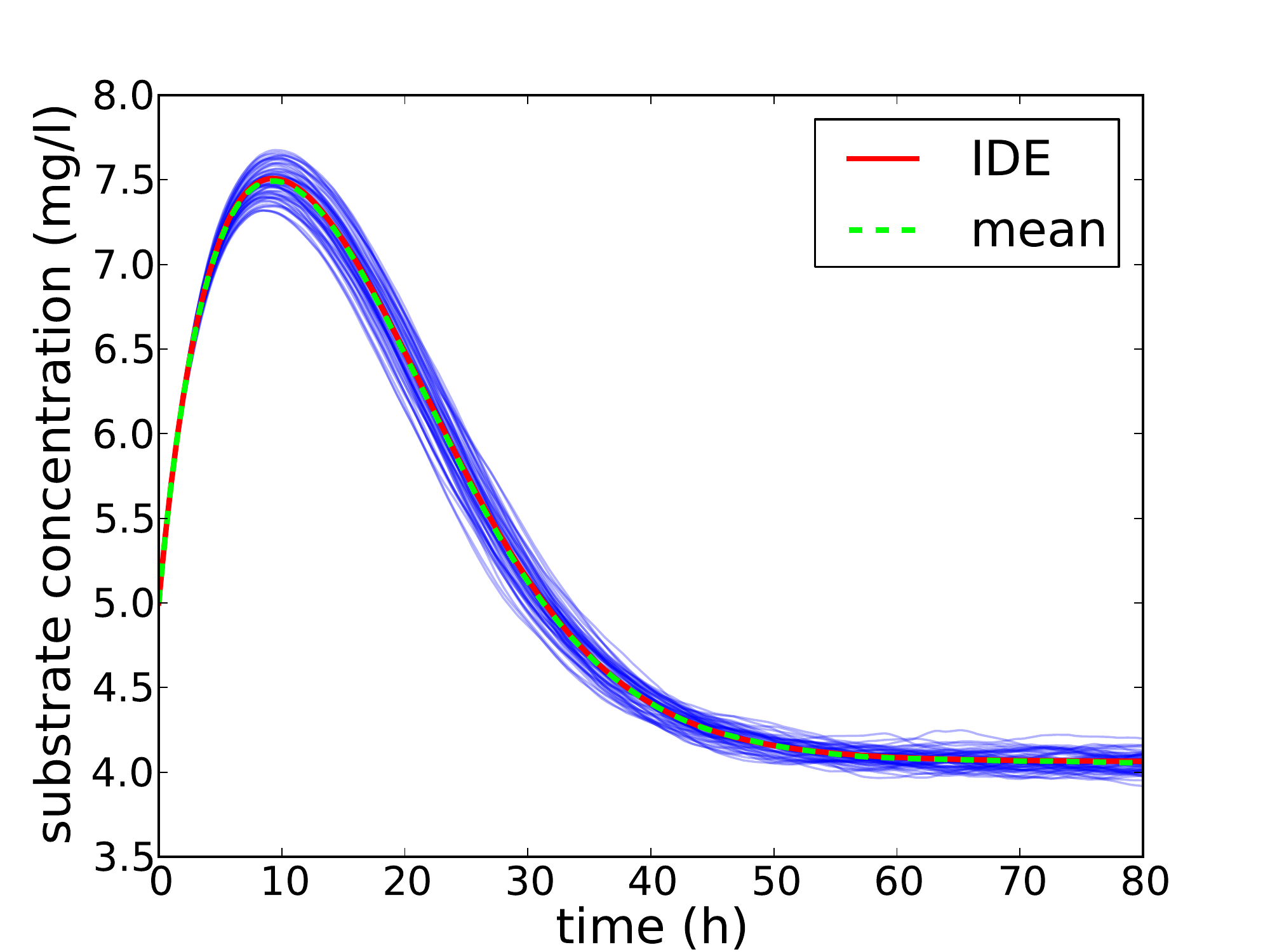}
&
\includegraphics[width=4.5cm]{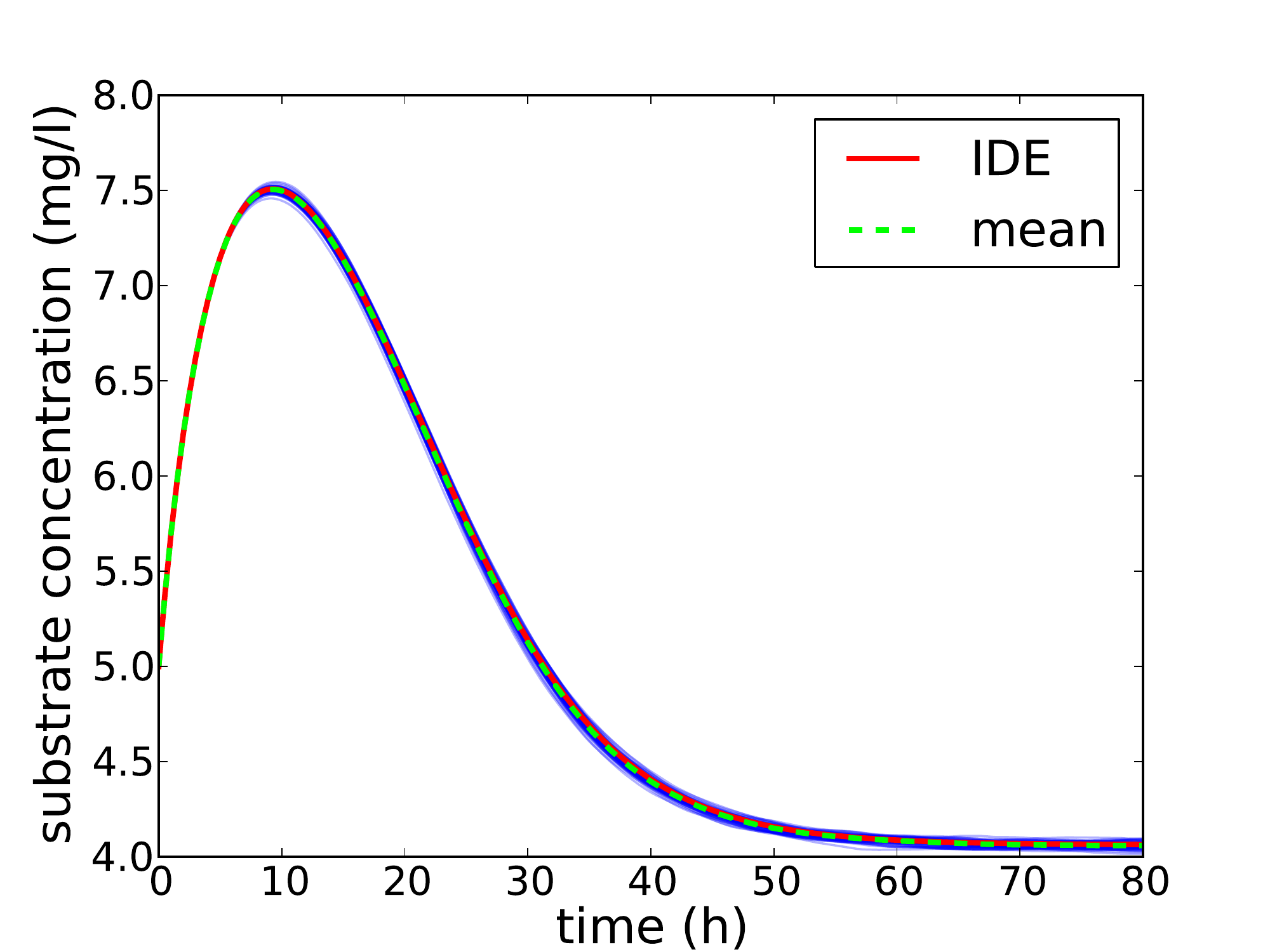}
\\ %-------
\includegraphics[width=4.5cm]{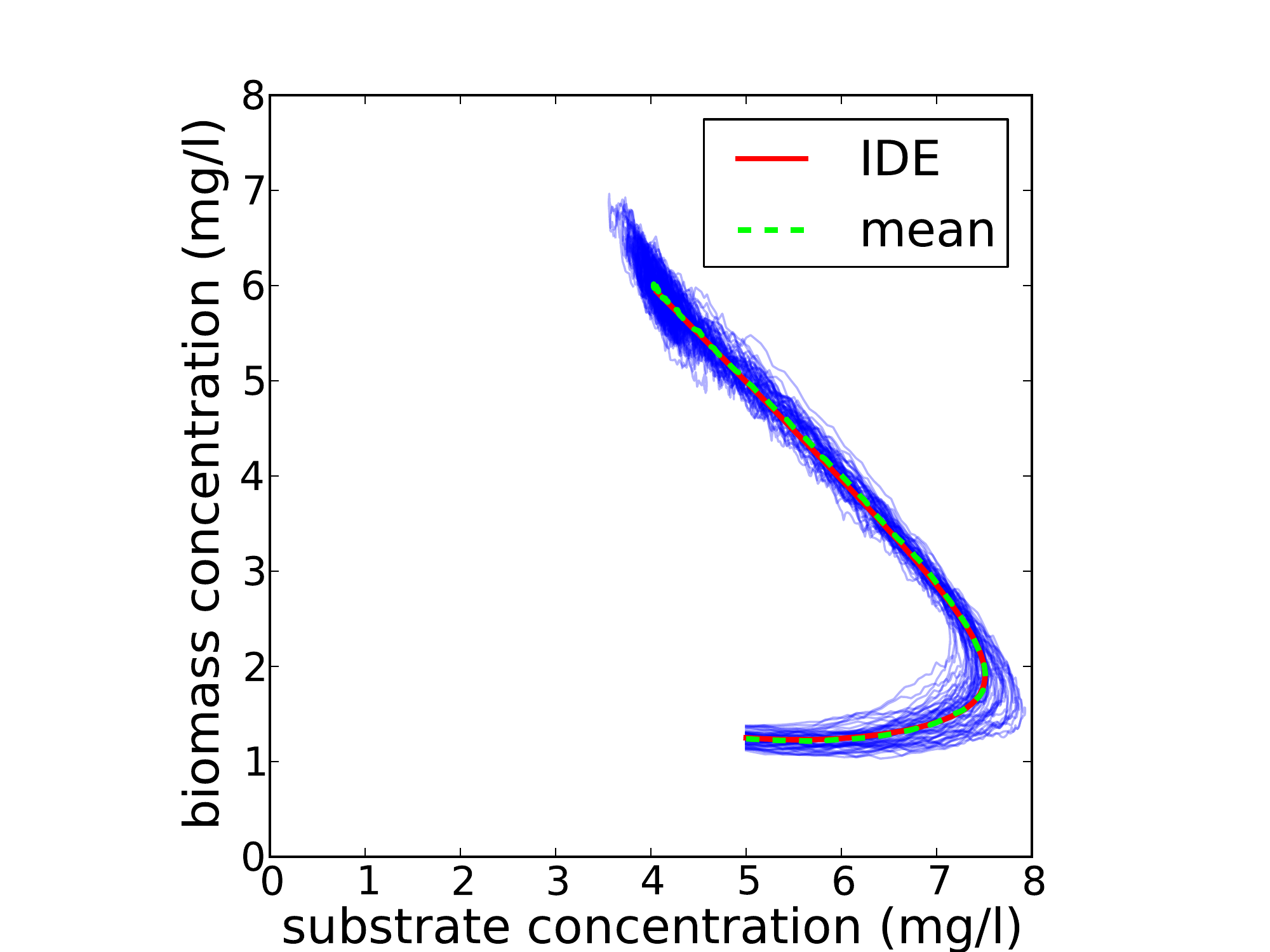}
&
\includegraphics[width=4.5cm]{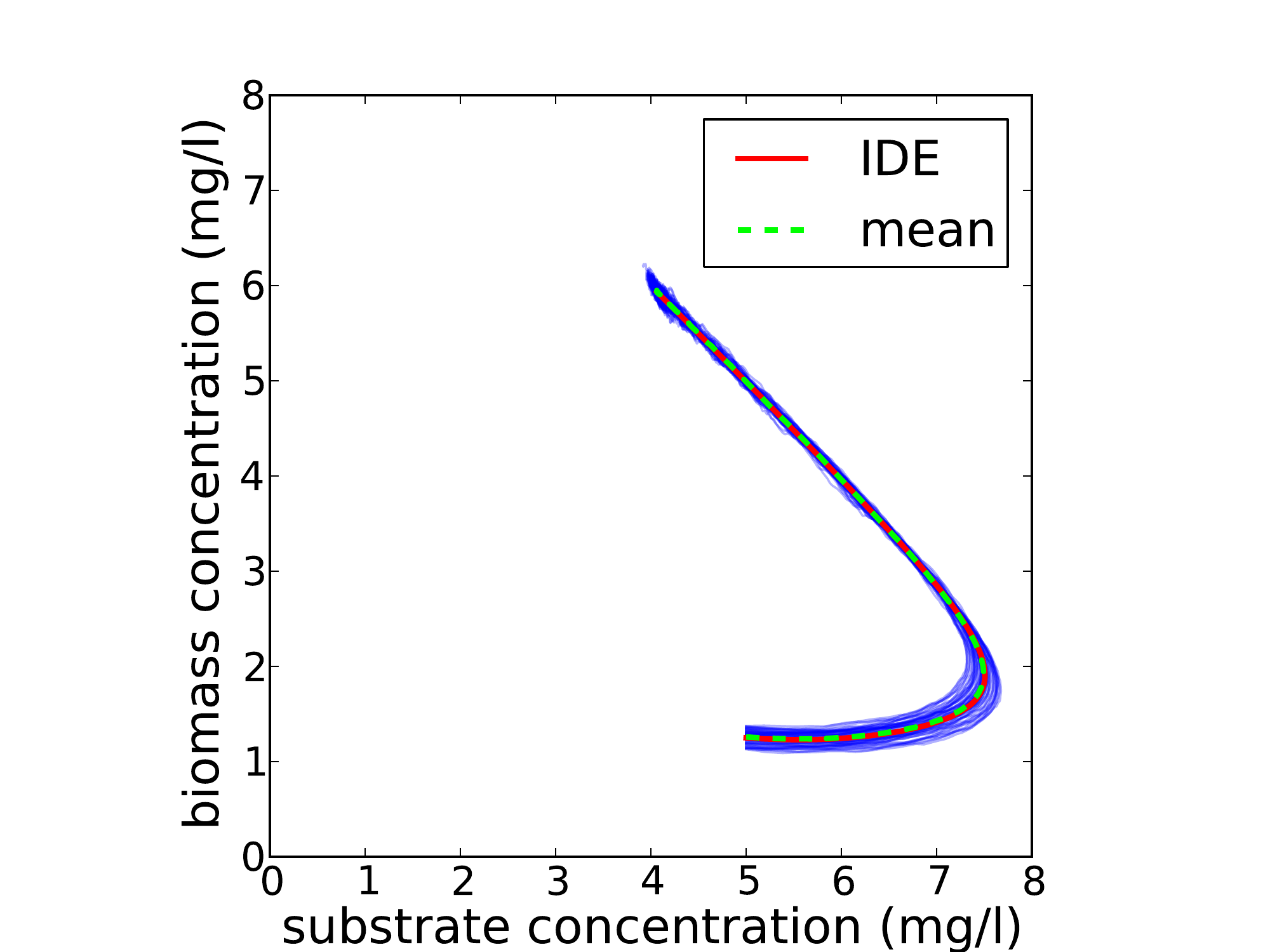}
&
\includegraphics[width=4.5cm]{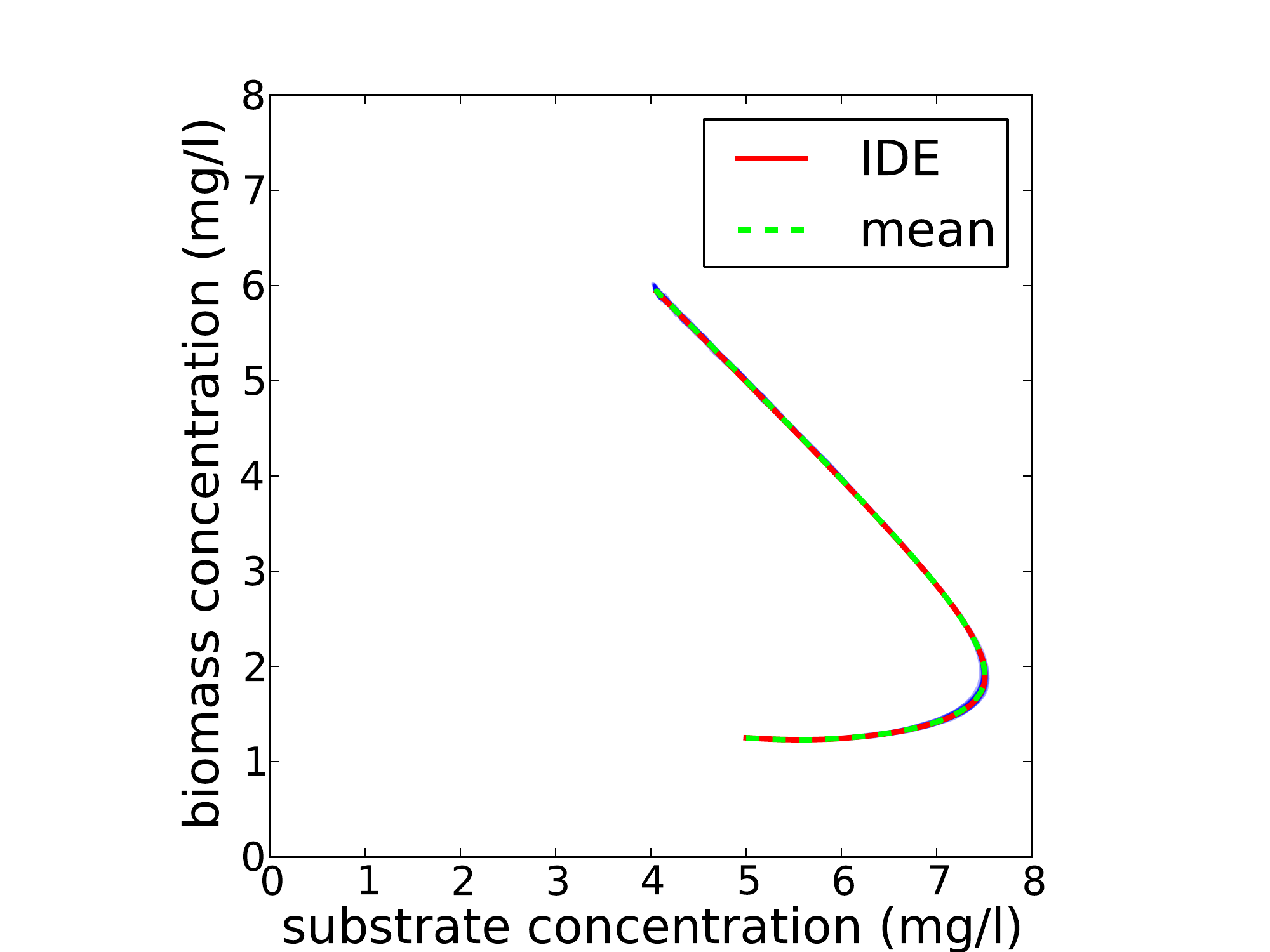}
\\ %-------
\scriptsize small population size
&
\scriptsize medium population size
&
\scriptsize large population size
\\
\scriptsize $V=0.05$ l, $N_0=100$
&
\scriptsize $V=0.5$ l, $N_0=1000$
&
\scriptsize $V=5$ l, $N_0=10000$
\end{tabular}
\end{center}
\vskip-1em
\caption{From top to bottom: time evolutions of the population size, the biomass concentration, the concentration substrate and  the concentrations phase portrait  for the three levels of population sizes (small, medium and large). The blue curves represent the trajectories of 60 independent runs of IBM. The green curve represents the mean value of these runs. The red curve represents the solution of the IDE. The rate $ D $ is 0.2 h$^{-1}$.}
\label{evol.taille.concentrations}
\end{figure}
%---------------------------------------------

%---------------------------------------------
\begin{figure}
\begin{center}
\includegraphics[trim=4.3cm 1.5cm 2.2cm 2.2cm,width=10cm]{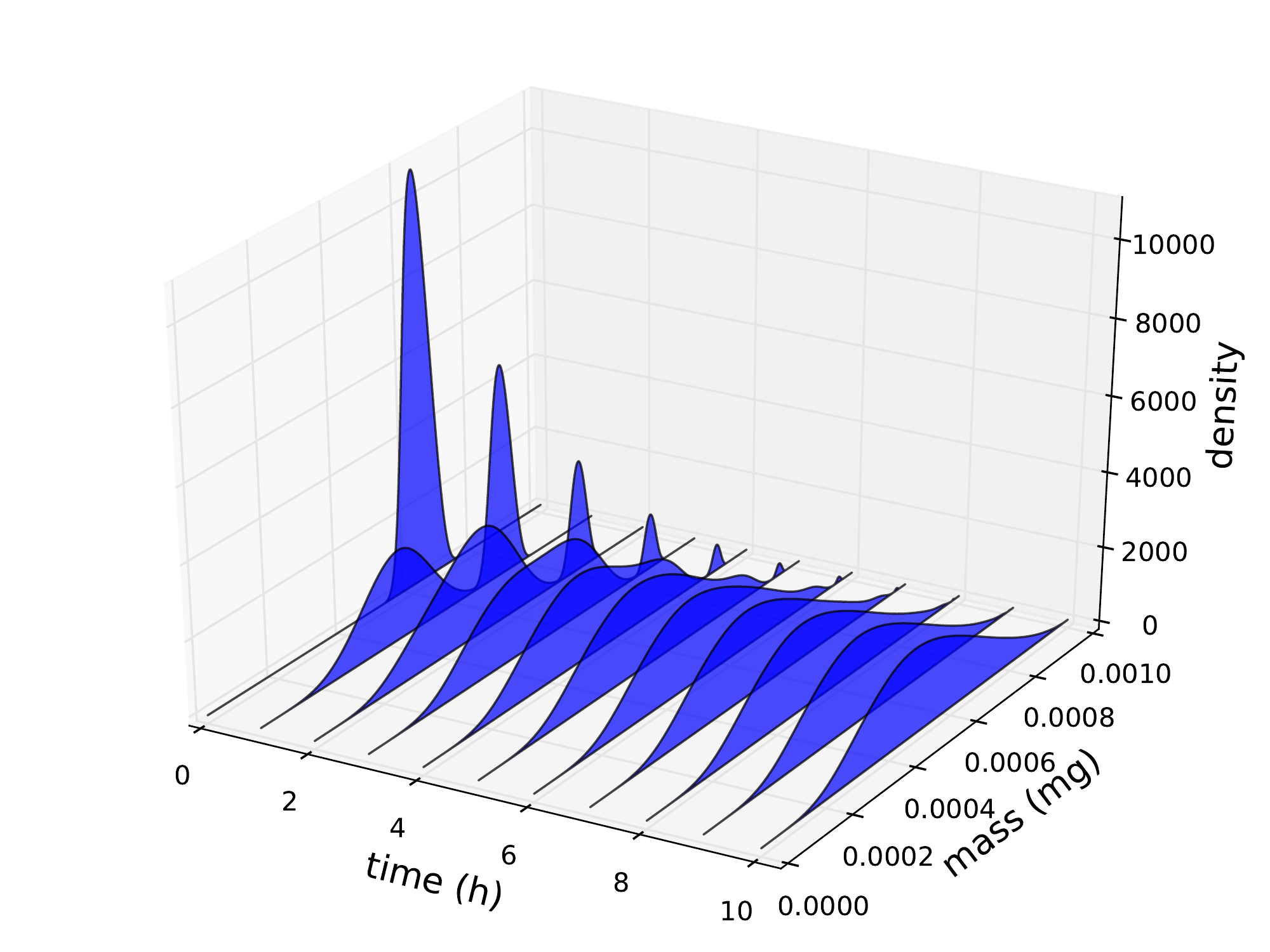}
\end{center}
\caption{Time evolution of the normalized mass distribution for the IDE \eqref{eq.limite.eid.fort}: we represent the simulation until time $T=10$ (h) only to illustrate the transient phenomenon due to the choice of the initial distribution \eqref{eq.d}. After a few iterations in time this distribution is bimodal, the upped mode growths in mass and disappears before $T=10$ (h).}
\label{fig.evol.eid}
\end{figure}
%---------------------------------------------

%---------------------------------------------
\begin{figure}
\begin{center}
\begin{tabular}{ccc}
\includegraphics[width=4.5cm]{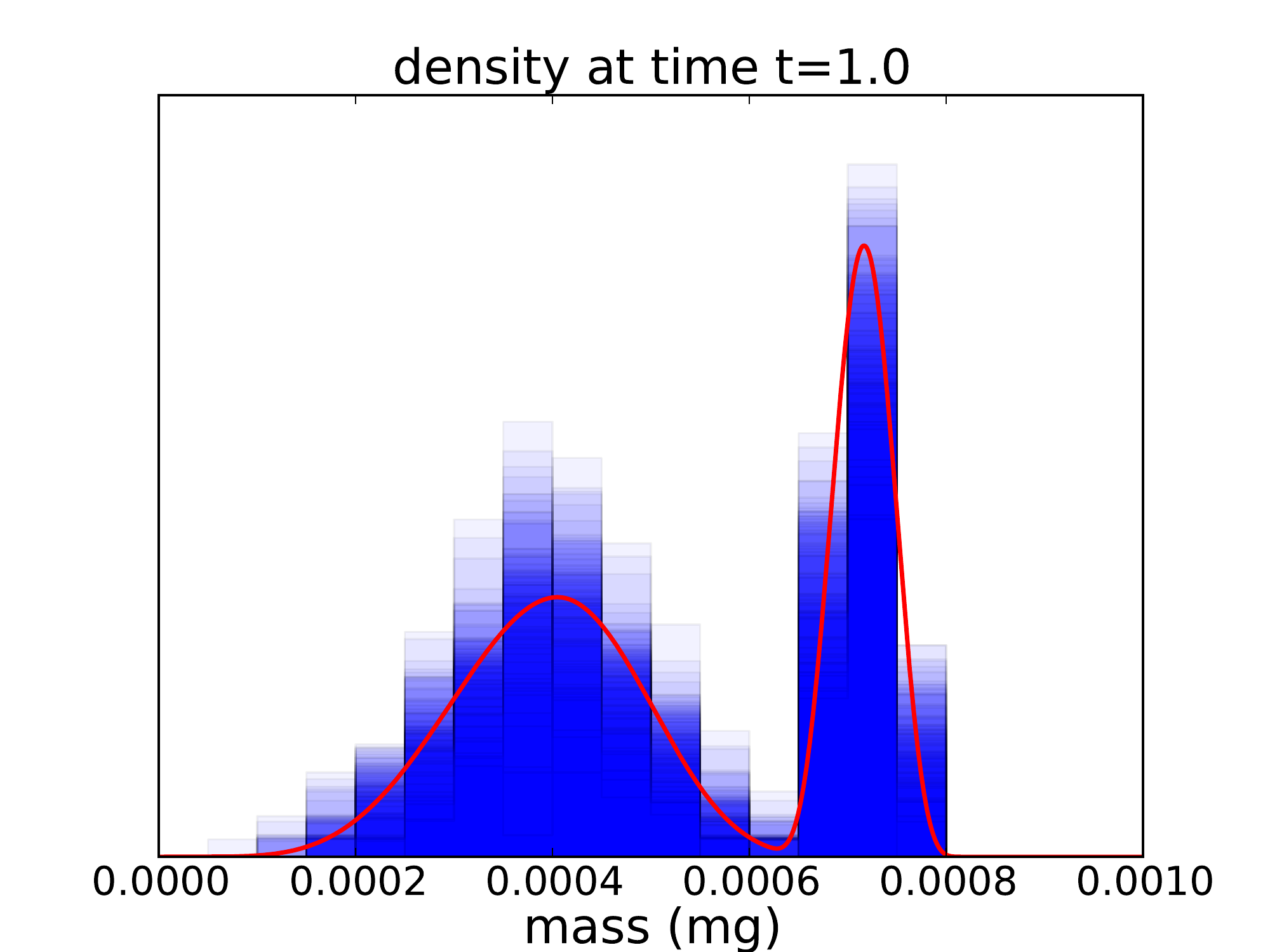}
&
\includegraphics[width=4.5cm]{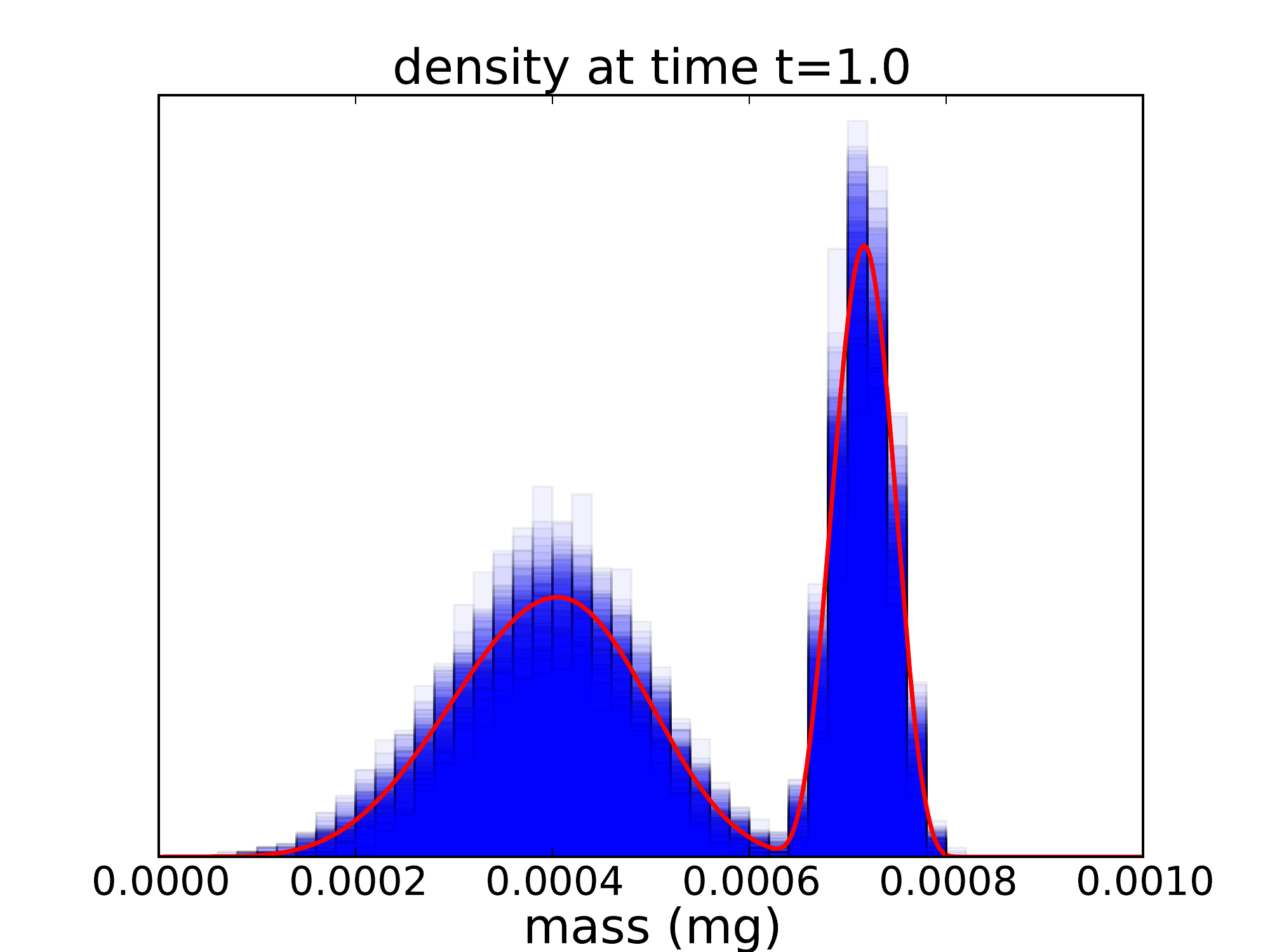}
&
\includegraphics[width=4.5cm]{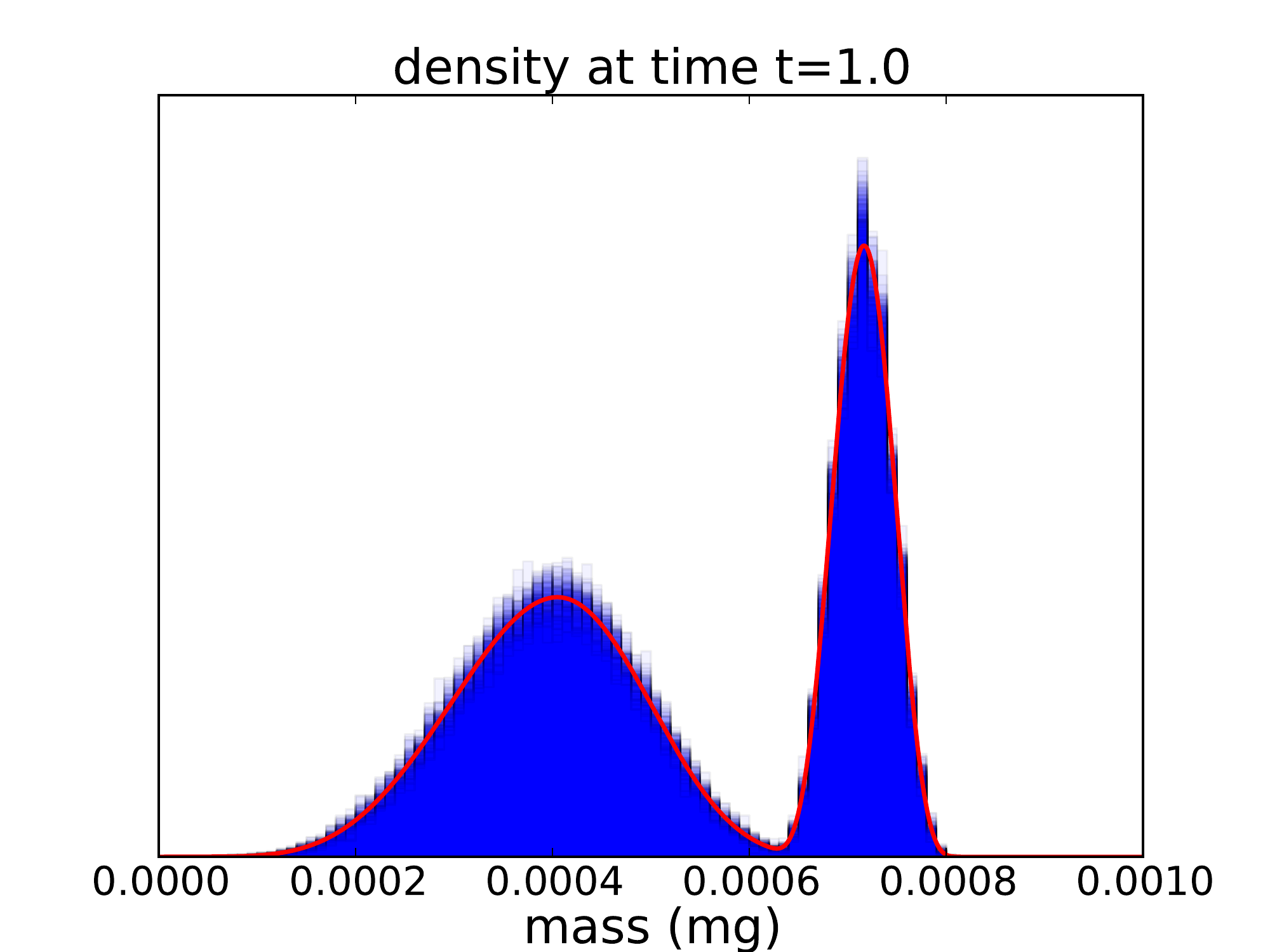}
\\ %-------
\includegraphics[width=4.5cm]{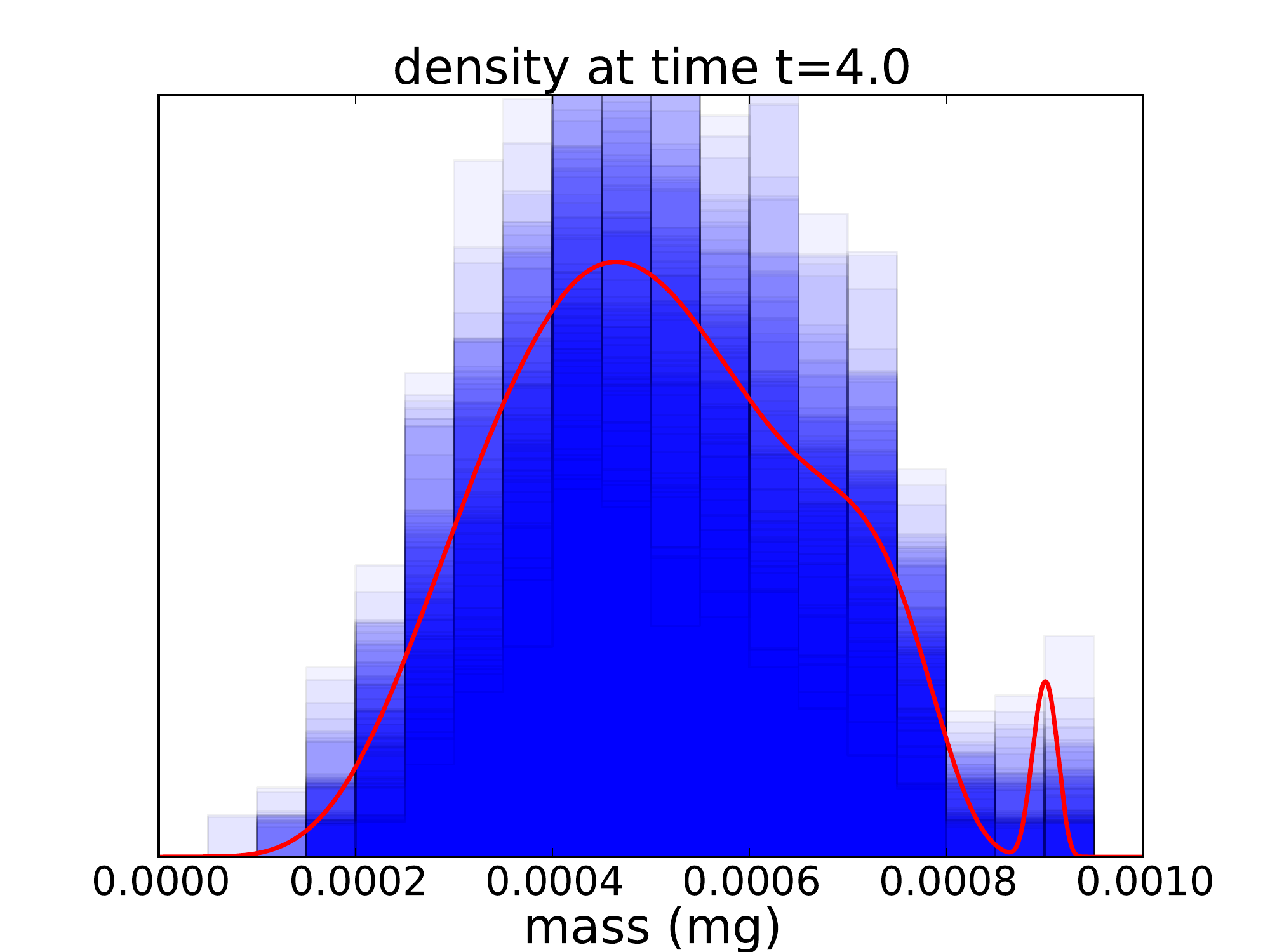}
&
\includegraphics[width=4.5cm]{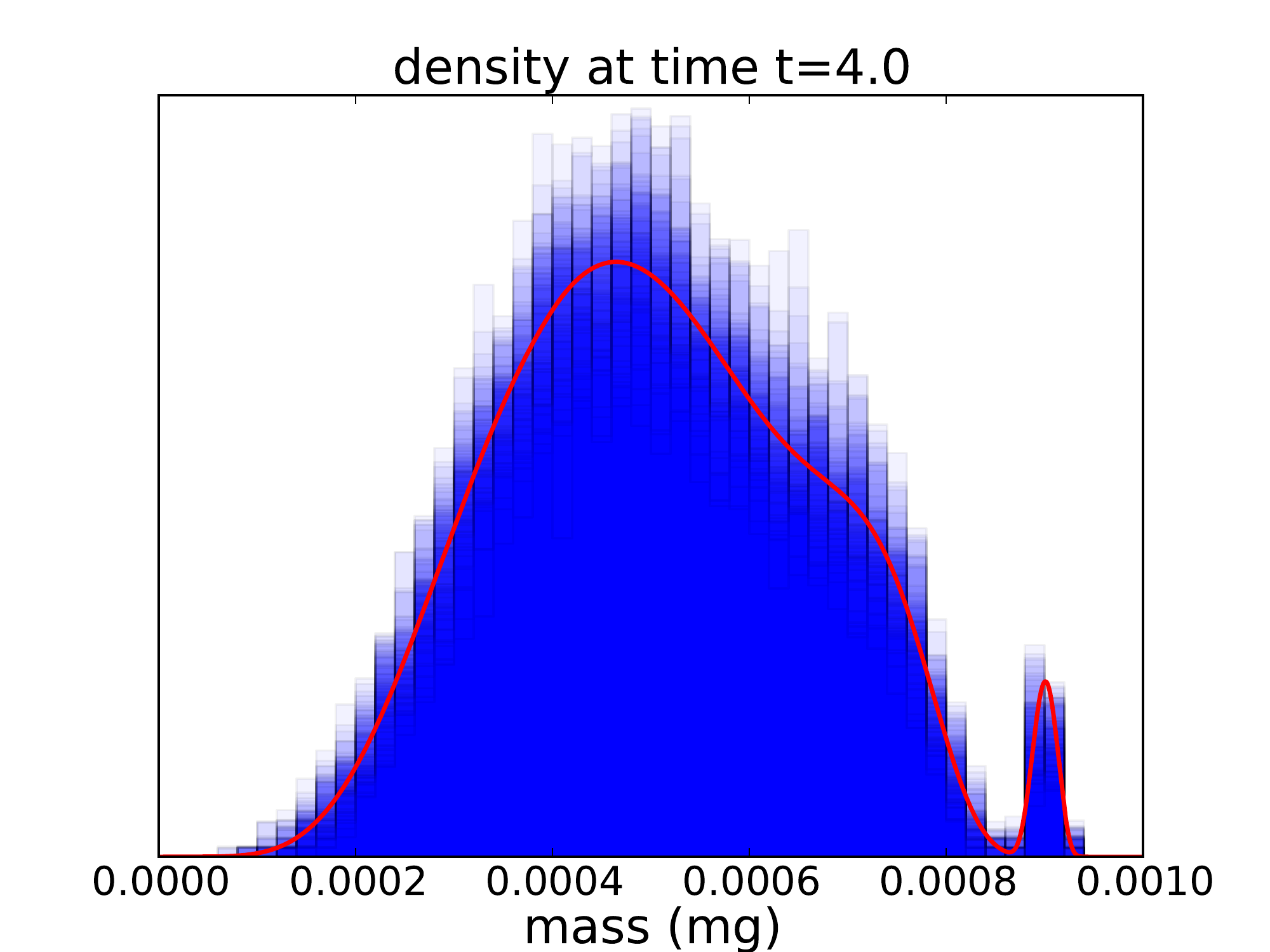}
&
\includegraphics[width=4.5cm]{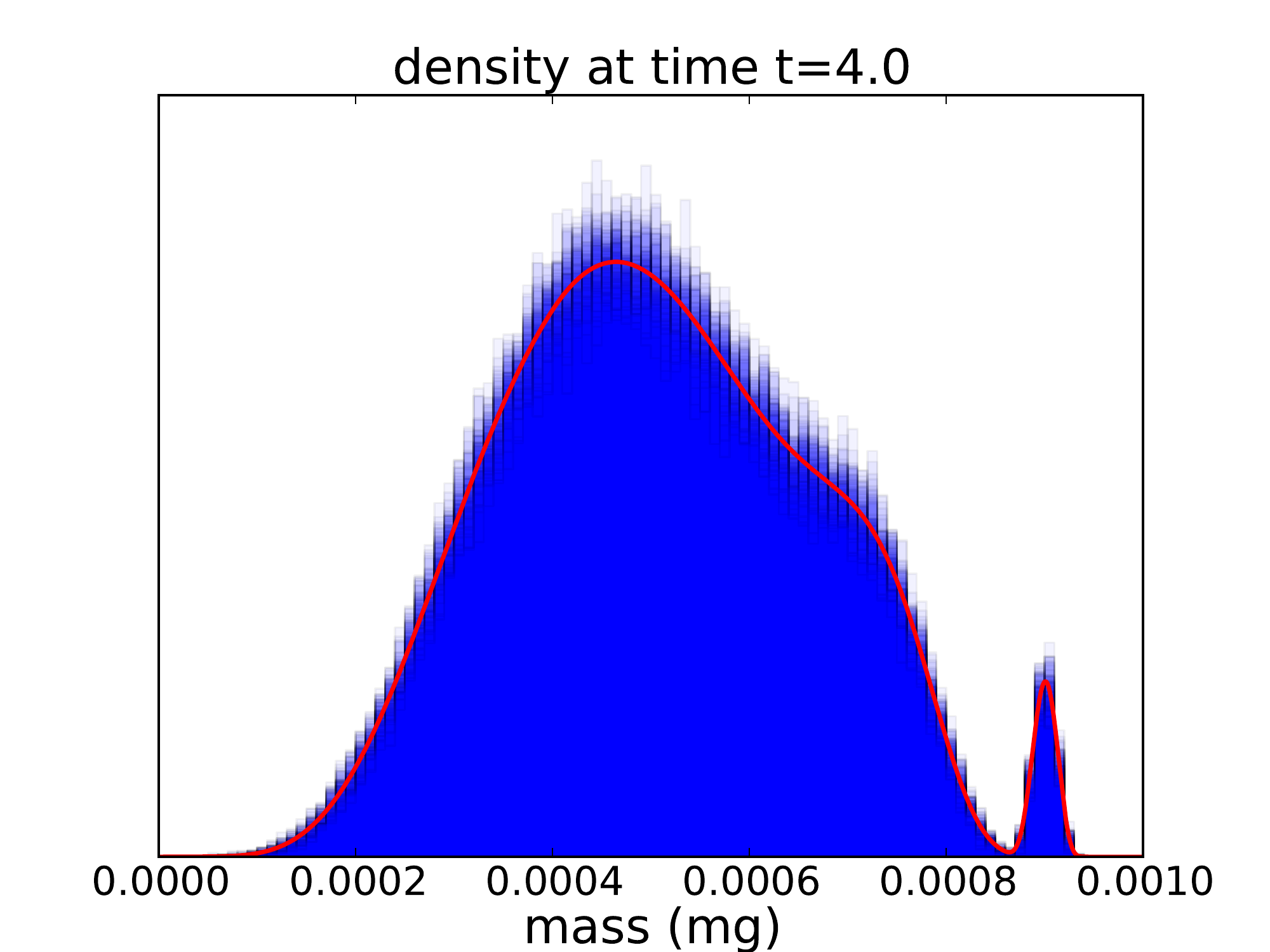}
\\ %-------
\includegraphics[width=4.5cm]{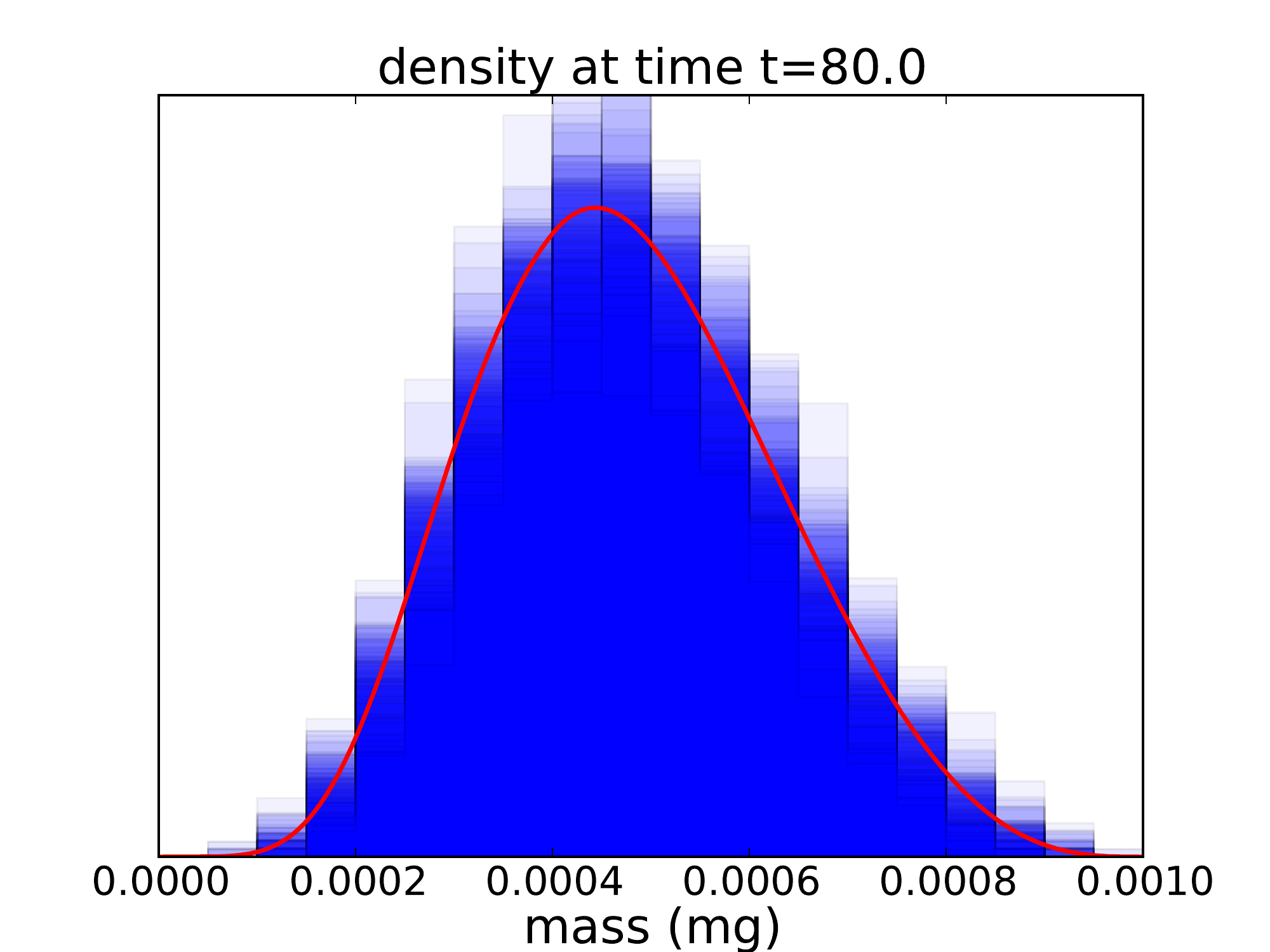}
&
\includegraphics[width=4.5cm]{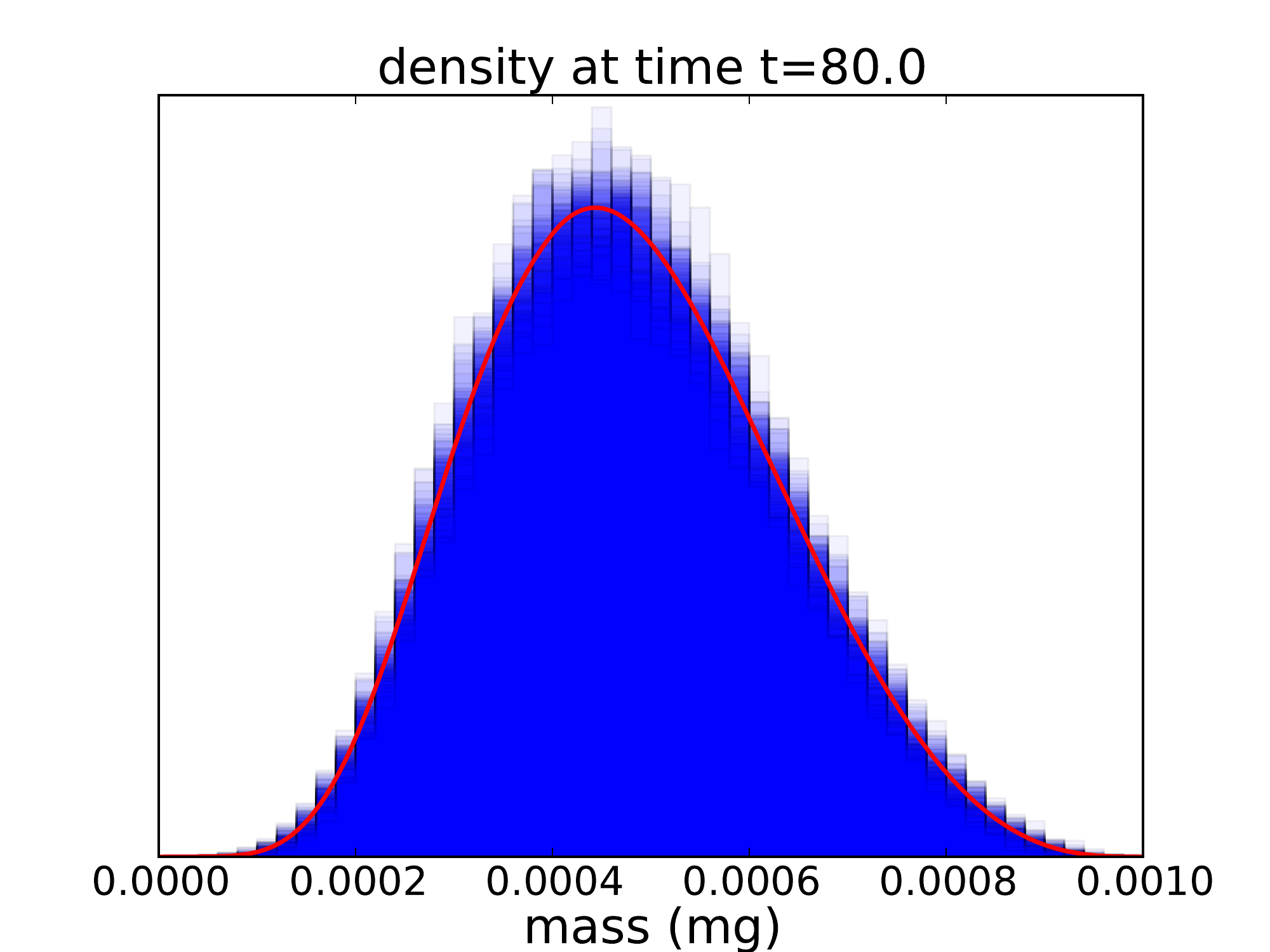}
&
\includegraphics[width=4.5cm]{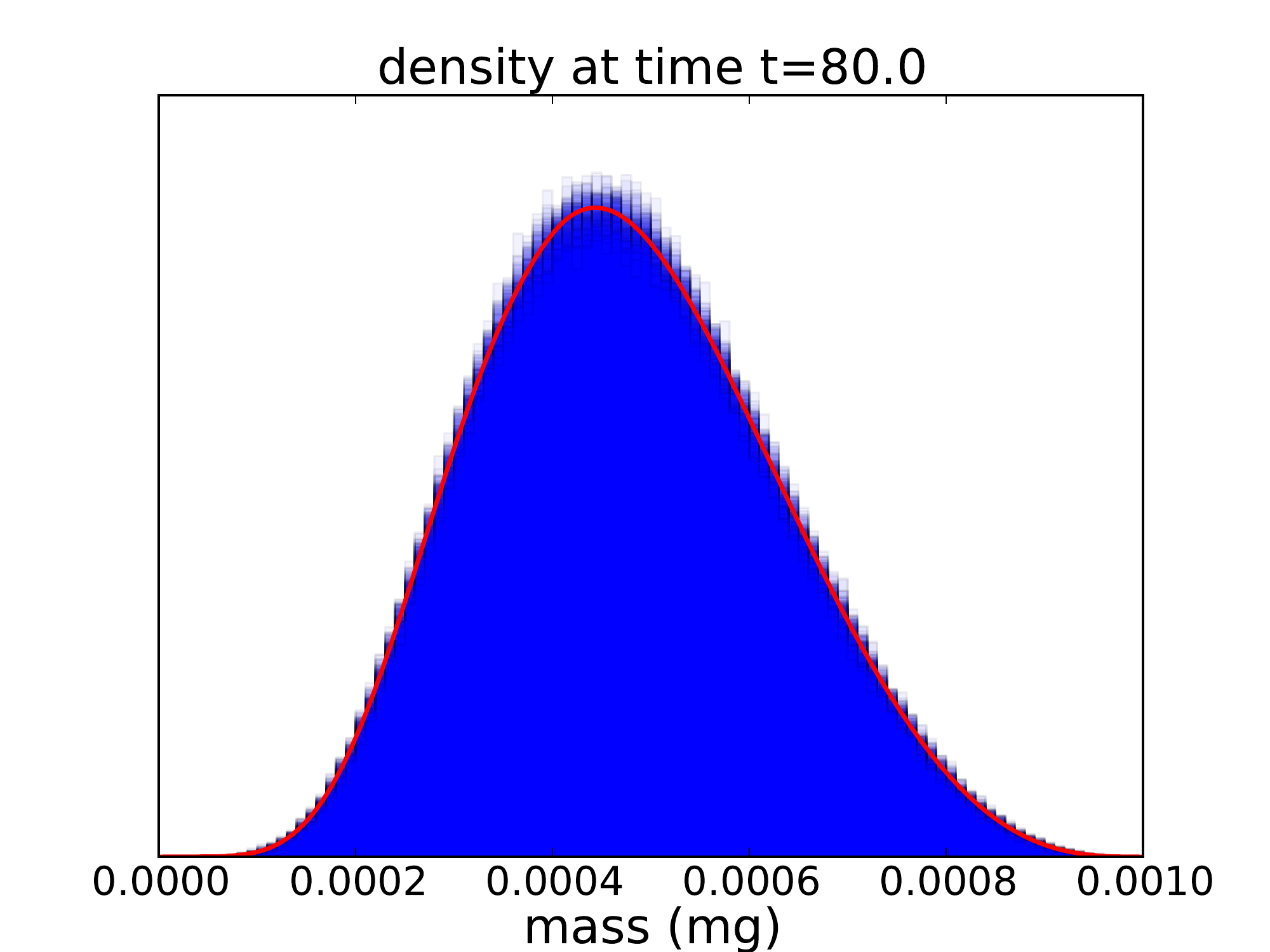}
\\ %-------
\scriptsize small population size
&
\scriptsize medium population size
&
\scriptsize large population size
\\
\scriptsize $V=0.05$ l, $N_0=100$
&
\scriptsize $V=0.5$ l, $N_0=1000$
&
\scriptsize $V=5$ l, $N_0=10000$
\end{tabular}
\end{center}
\caption{Mass distribution for the time $t=1$ (above), $t=4$ (middle) and $t=80$ (bottom) in small (left), medium (middle) and large (right) population size. For each graph, the blue histograms represent the empirical mass distributions of individuals for the 60 independent runs of IBM. In order to plot the histogram we have adapted the number of bins according to the population size. The red curve represents the mass distribution given by the IDE. The dilution rate $ D $ is 0.2 h$^{-1}$. Again the convergence of the IBM solution to the IDE in large population limit is observed.}
\label{repartition.masse}
\end{figure}
%---------------------------------------------

%%%%%%%%%%%%%%%%%%%%%%%%%%%%%%%%%%%%%%%%%%%%%%%%%%%%%%%%%%%%%%%%%%%%%%
\subsection{Comparison of the IBM, the IDE and the ODE}
%%%%%%%%%%%%%%%%%%%%%%%%%%%%%%%%%%%%%%%%%%%%%%%%%%%%%%%%%%%%%%%%%%%%%%

We now compare the IBM and the IDE to the classical chemostat model described by the system of ODE \eqref{eq.chemostat.edo}. The function $\tilde\mu$ is the specific growth rate. The growth model in both the IBM and the IDE is of Monod type, so for the ODE model we also consider the classical Monod kinetics:
\begin{align}
\label{eq.edo.monod}
	\tilde\mu(S)
	& = 
	\mu_{\max} \, \frac{S}{K_s+S}\,.
\end{align}
The parameters of this Monod law are not given in the initial model and we use 
a least squares method to determine the value of the parameters  $\mu_{\max}$ and $K_s$
which minimize the quadratic distance between $(S_{t},X_{t})_{t\leq T}$ given by \eqref{eq.chemostat.edo} and  $(S_{t},X_{t}=\int_{\X} x\,p_{t}(x)\,\rmd x)_{t\leq T}$ given by \eqref{eq.limite.substrat.fort}-\eqref{eq.limite.eid.fort}.

The numerical integration of the ODE \eqref{eq.chemostat.edo} presents no difficulties
and is performed by the function \texttt{odeint} of the module  \texttt{scipy.integrate} of \texttt{Python} with the default parameters.

\bigskip

First we consider  a simulation based on the initial mass density $d(x)$  defined by 
\eqref{eq.d}. With this initial density both the IDE and the IBM feature a transient phenomenon described in the previous section and illustrated in Figures  \eqref{fig.evol.eid} et~\eqref{repartition.masse}.

Figure \ref{fig.edo.ibm.eid} (left) shows a significant difference between the IBM and the IDE on the one hand and the ODE on the other hand, the latter model cannot account for the transient phenomenon. With the first two models, the individual bacteria are withdrawn uniformly and independently of their mass (large mass bacteria has the same probability of withdrawal as small mass bacteria) and 
at the beginning of the simulation there is  a decrease in biomass
as at initial state  $d(x)$ has a substantial proportion of large bacteria mass.
The ODE is naturally not able to account for this phenomenon.

%---------------------------------------------
\begin{figure}
\begin{center}
\begin{tabular}{cc}
\includegraphics[width=5cm]{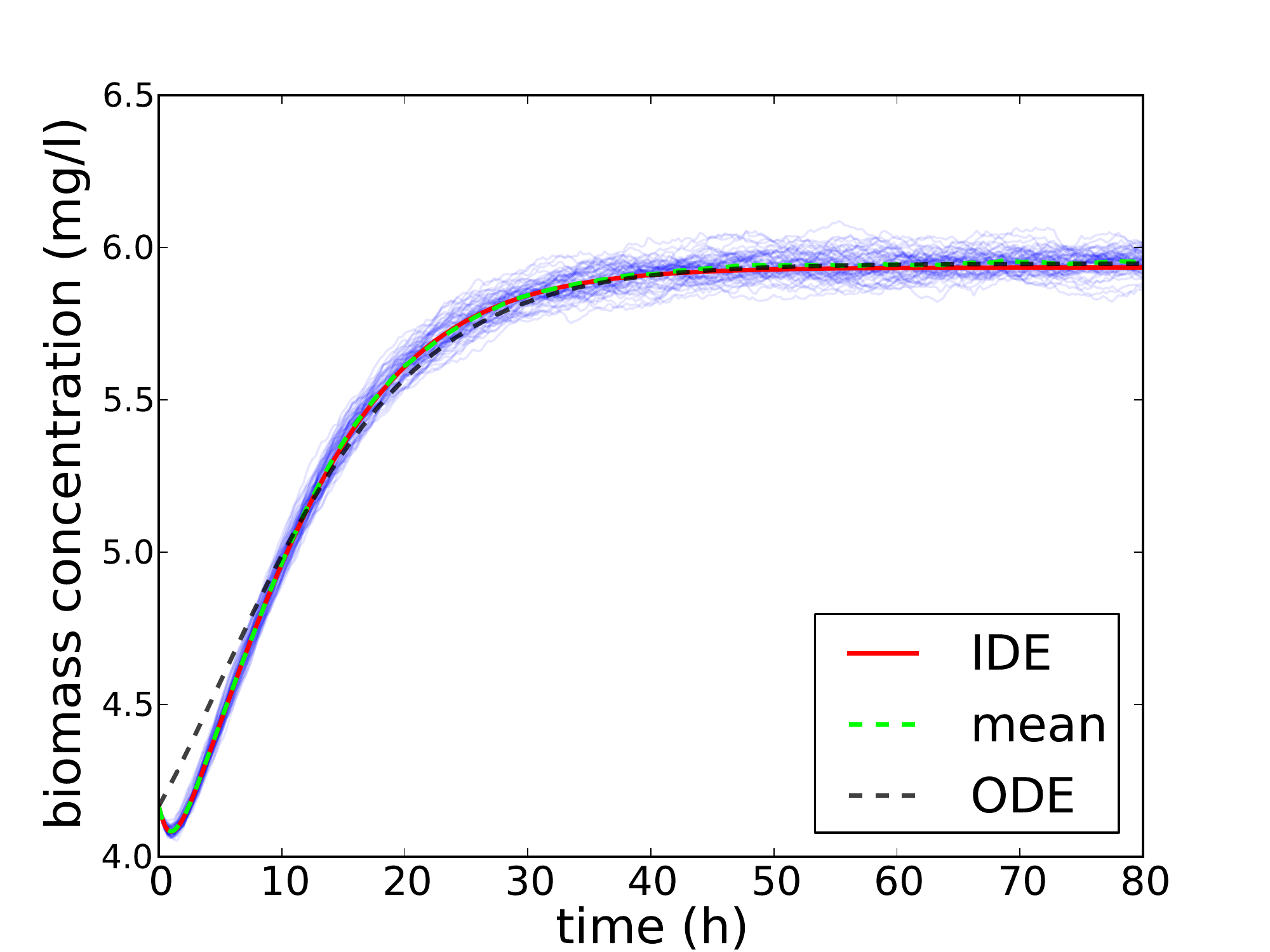}
&
\includegraphics[width=5cm]{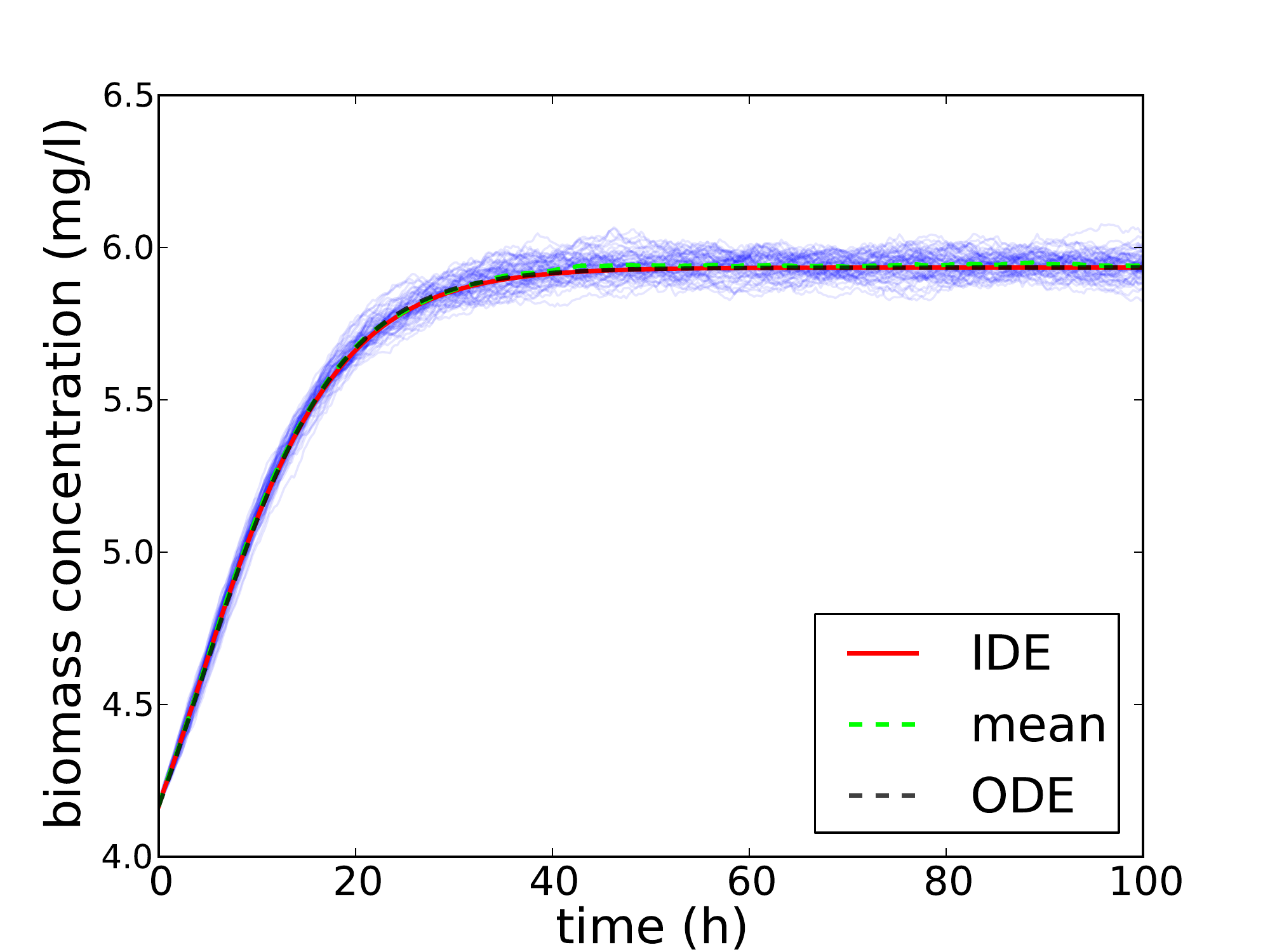}
\\
\includegraphics[width=5cm]{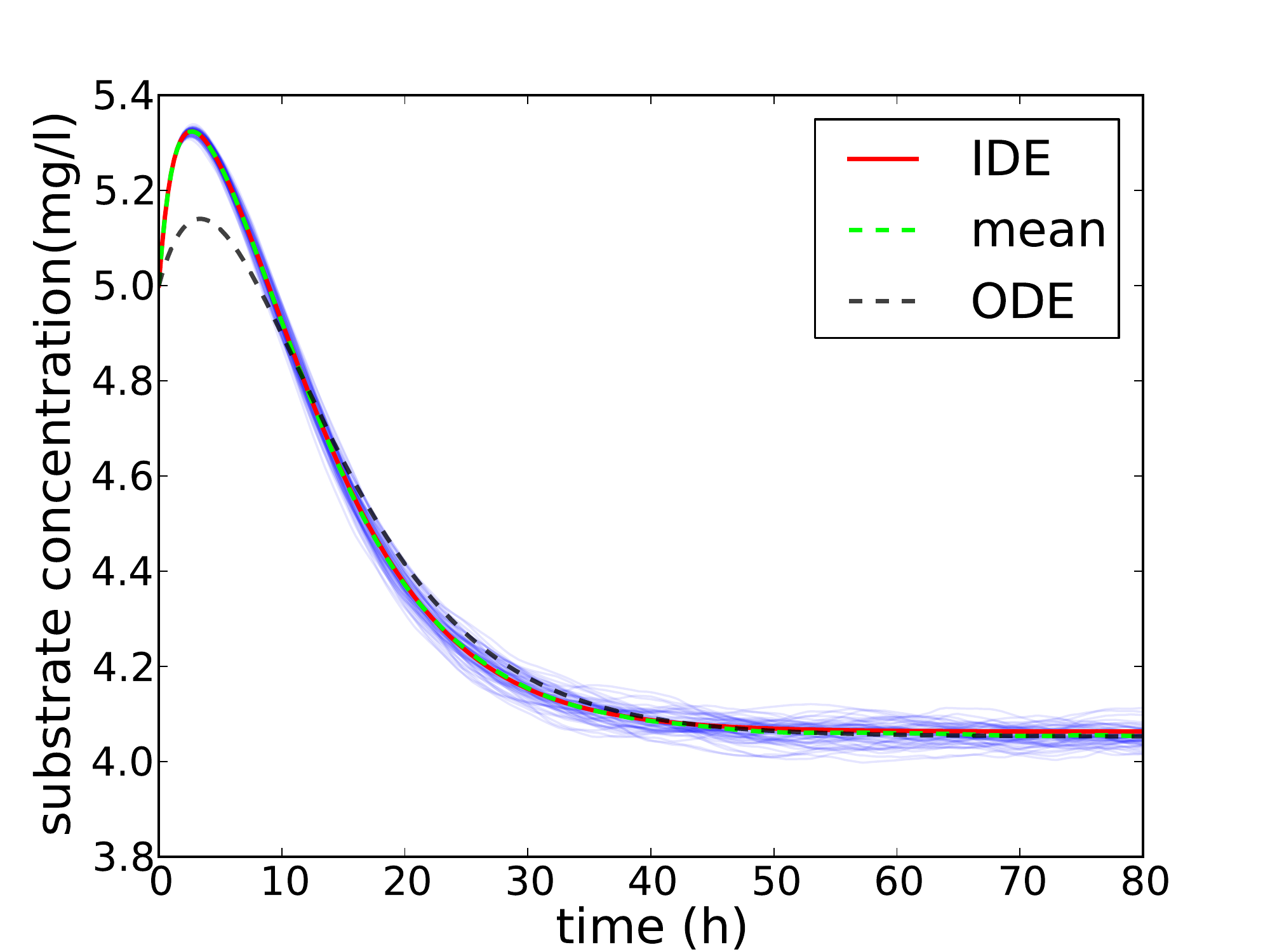}
&
\includegraphics[width=5cm]{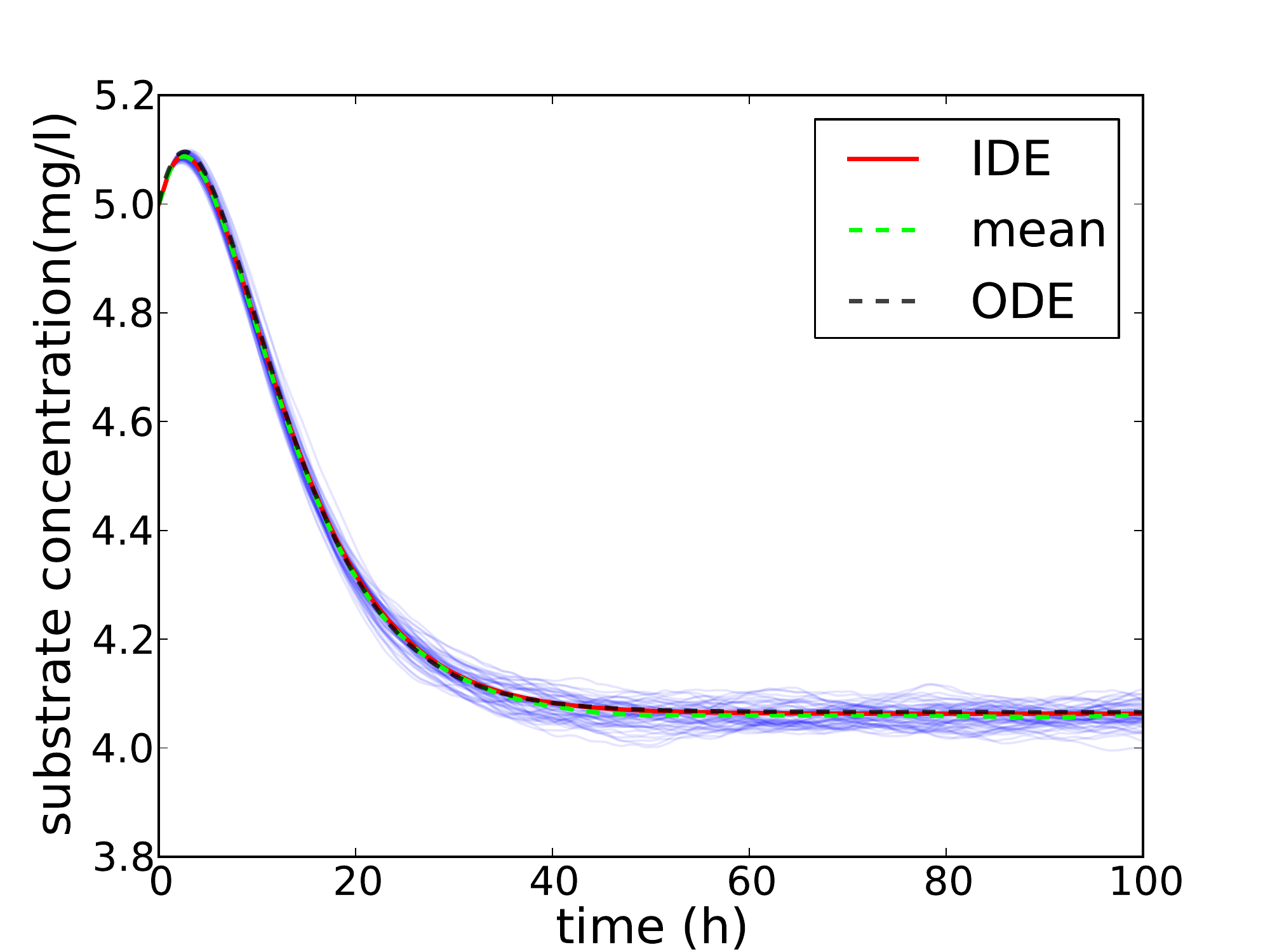}
\\
\includegraphics[width=6cm]{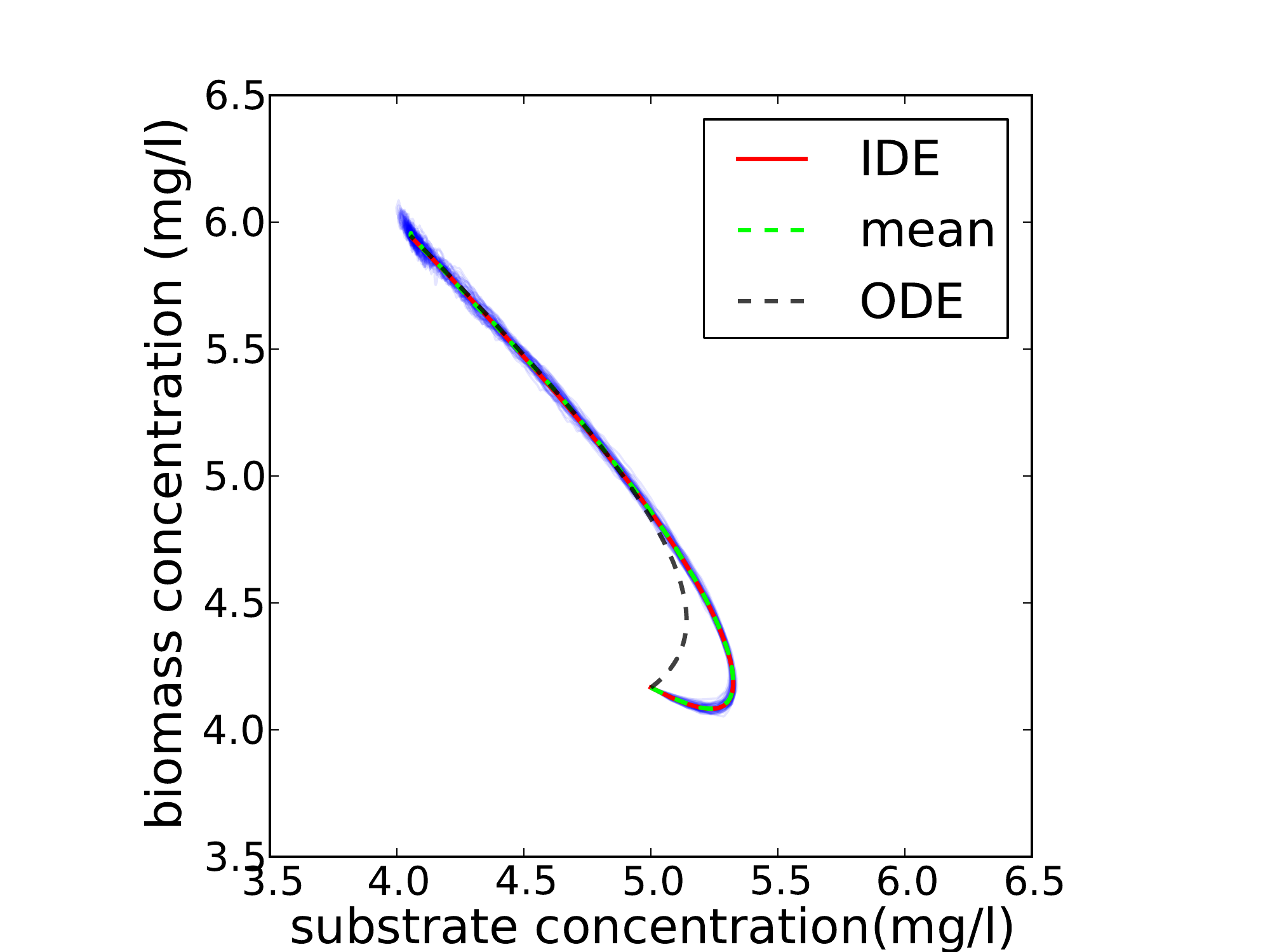}
&
\includegraphics[width=6cm]{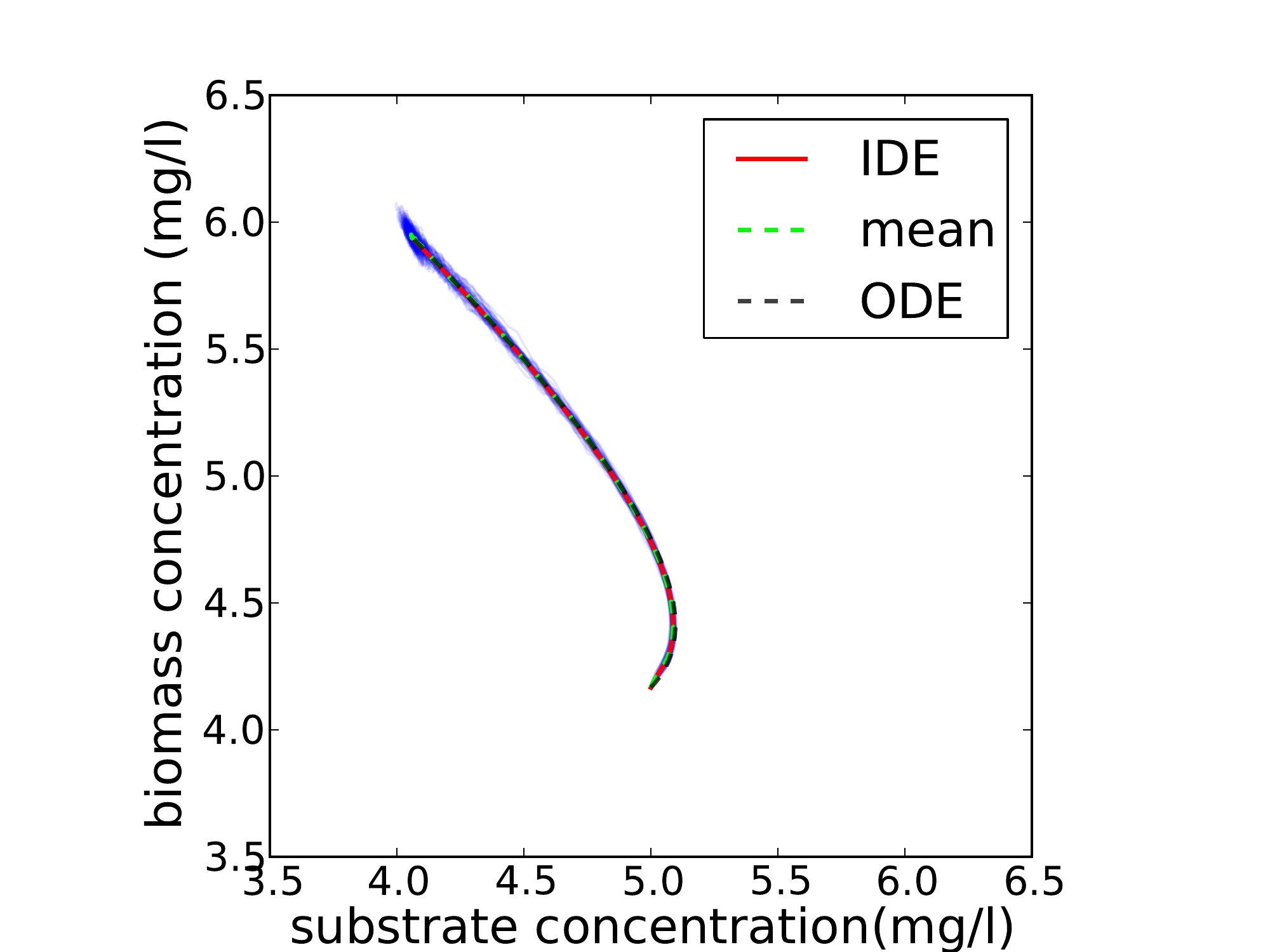}
\\[0.7em]
\small initial density $d(x)$ \eqref{eq.d}
&
\small initial density $d'(x)$ \eqref{eq.d'}
\end{tabular}
\end{center}
\caption{Time evolution of the biomass concentration (top), the substrate concentration (middle) and the concentration trajectories in the phase space (bottom) according to the initial mass distributions \eqref{eq.d} (left) and \eqref{eq.d'} (right). In blue, the trajectories of 60 independent runs of the IBM simulated with $V=3$ l and $N_0=20000$ (left), $N_0=25000$ (right); in green, the mean of the IBM runs; in red, the solution of IDE \eqref{eq.limite.substrat.fort}-\eqref{eq.limite.eid.fort}; in black, the solution of the ODE \eqref{eq.chemostat.edo}. The latter is fitted by the least squares method on the IDE, the parameters of the Monod law \eqref{eq.edo.monod} are $\mu_{\max}=0.341$ and $K_s = 2.862$ in the first case and $\mu_{\max}=0.397$ and $K_s = 3.996$ in the second.
As initial densities are different, $N_{0}$ is adapted so that the average initial biomass concentration is the same in both cases.
The dilution rate $D$ is 0.2 h$^{-1}$.  In the first case, the ODE gives no account for the transient phenomenon described in the previous section, see Figures \eqref{fig.evol.eid} and \eqref{repartition.masse}, while the IDE and the IBM give coherent account for it. In the second case the three models are consistent.}
\label{fig.edo.ibm.eid}
\end{figure}
%---------------------------------------------

This phenomenon no longer appears when uses the following density:
\begin{align}
\label{eq.d'}
d'(x)
	& =	
		\Biggl(
			\frac{x-0.00035}{0.0003}
			\,\left(1-\frac{x-0.00035}{0.0003}\right)
		\Biggr)^5 \,
		1_{\{0.00035 < x < 0.00065\}}\,.
\end{align}

Indeed, from Figure \ref{fig.edo.ibm.eid} (right), there is no longer any biomass decay  at the beginning of the simulation and the different simulations are comparable,  
the ODE and the IDE match substantially.

%%%%%%%%%%%%%%%%%%%%%%%%%%%%%%%%%%%%%%%%%%%%%%%%%%%%%%%%%%%%%%%%%%%%%%
\subsection{Study of the washout}
%%%%%%%%%%%%%%%%%%%%%%%%%%%%%%%%%%%%%%%%%%%%%%%%%%%%%%%%%%%%%%%%%%%%%%

%---------------------------------------------
\begin{figure}
\begin{center}
\includegraphics[width=10cm]{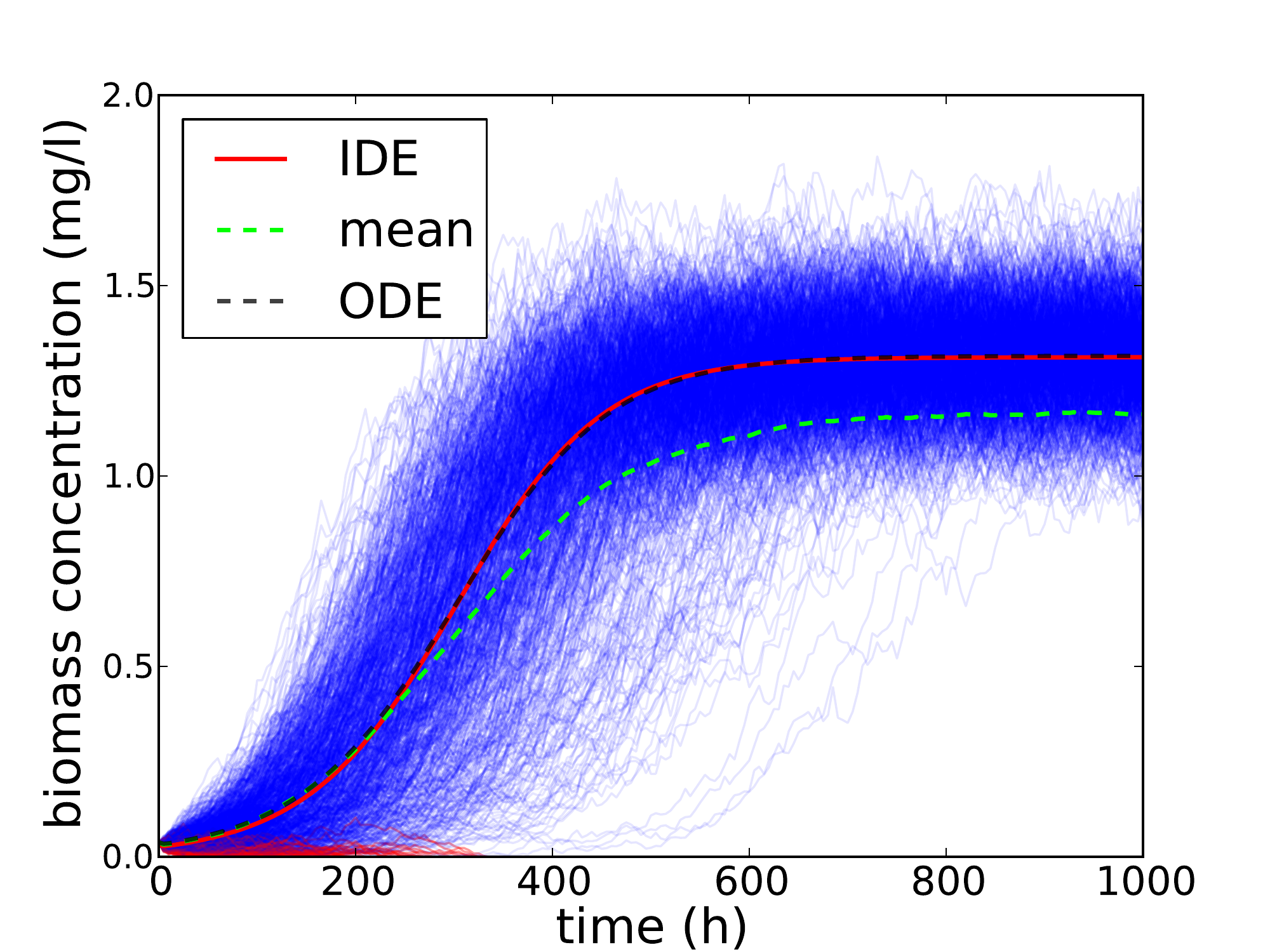}
\end{center}
\caption{Time evolution of the biomass concentration. In blue, 1000 independent realizations of the IBM simulated with $V=0.5$ l and $N_0=30$;  in green, the mean of these runs; in red, the solution of the IDE; in black, the solution of the ODE with parameters values $\mu_{\max}=0.482$ and $K_s = 6.741$. The dilution rate $ D $ is   0.275 h$^{-1}$.  Among the 1000 independent runs of the IBM, 111 lead to washout while the deterministic models converge to an equilibrium with strictly positive biomass. The mean value of the  1000 runs of the IBM gives account for the washout probability while IDE and ODE models do not account for this question.}
\label{fig.lessivage}
\end{figure}
%---------------------------------------------

%---------------------------------------------
\begin{figure}[p]
\begin{center}
\includegraphics[width=9cm]{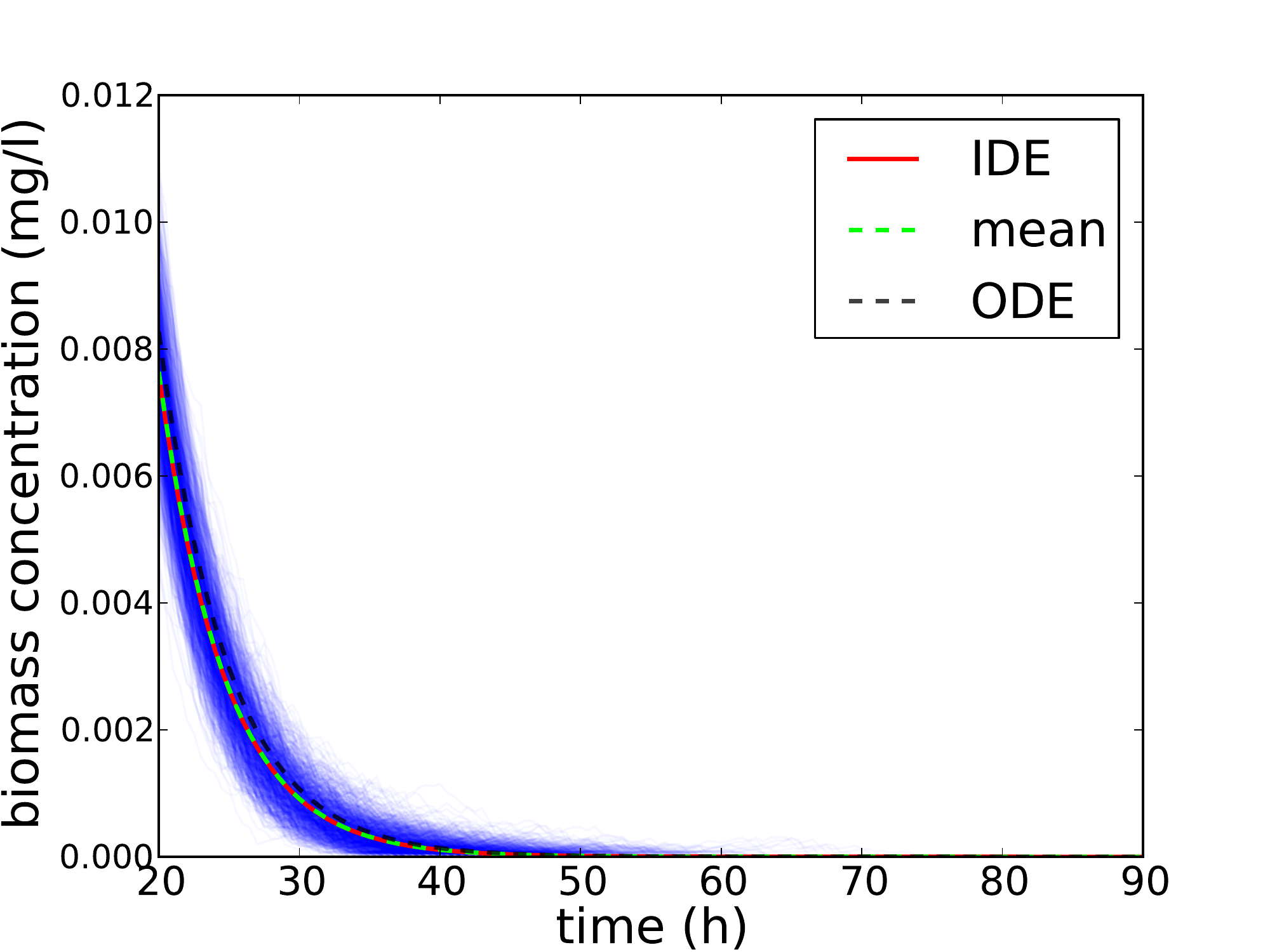}\\
\includegraphics[width=9cm]{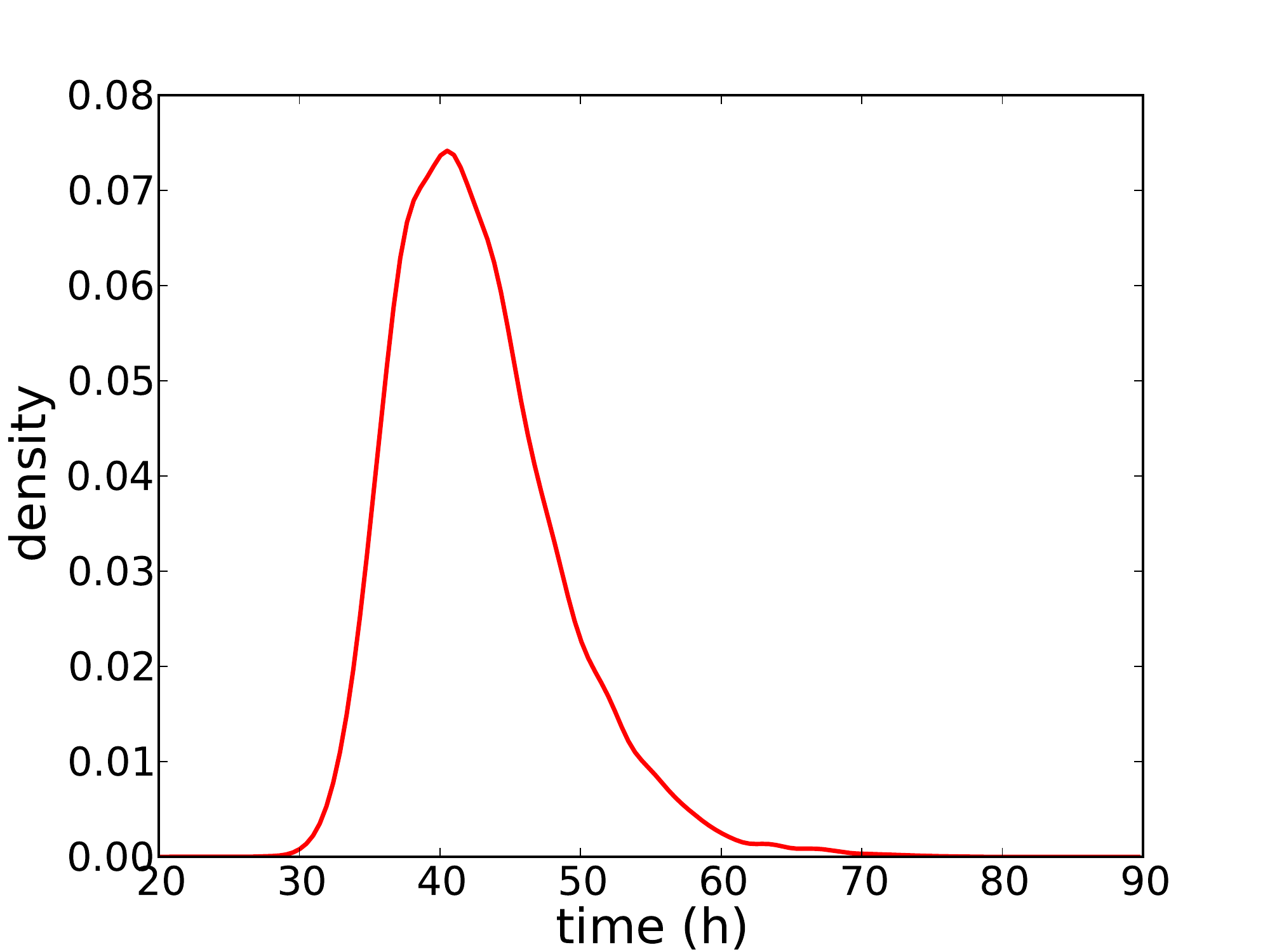}
\caption{$\blacktriangleright$ (Top) Evolution of biomass concentration between $t=20$ and $t=90$ h: blue, 1000 independent runs of the IBM; in green, the mean value of these runs; in red the solution of the IDE; in black, the solution to the ODE with parameters $\mu_{\max}=0.814$ and $K_s = 17.547$. The parameters are $V=10$ l and $N_0=10000$, the dilution rate $D$ is  0.5 h$^{-1}$. For both deterministic models, the size of the population decreases exponentially rapidly to 0 but remains strictly positive for any finite time. However, all the runs of the IBM reach washout in finite time $\blacktriangleright$ (Bottom) empirical distribution of the washout time calculated from 7000 independent runs of the IBM and plotted using a time kernel regularization.}
\label{fig.lessivage2}
\end{center}
\end{figure}
%---------------------------------------------

One of the main differences between deterministic  and stochastic models lies in their way of accounting for the washout phenomenon  (or extinction phenomenon in the case of an ecosystem). With a sufficiently small dilution rate $D$, the solutions of the ODE \eqref{eq.chemostat.edo} and the IDE  \eqref{eq.limite.substrat.fort}-\eqref{eq.limite.eid.fort}  converge to an equilibrium point with strictly positive biomass. In fact, the  washout is an unstable equilibrium point and apart from the line corresponding to the null biomass, the complete phase space corresponds to a basin of attraction leading to a solution with a strictly positive biomass asymptotic point. However, from Figure \ref{fig.lessivage}, among the 1000 independent runs of the IBM, 111 of them converge to washout before time $t=1000$ h; so the probability of washout at this instant is approximately 11\%. It may be noted that the IDE and the ODE do not correspond to the average value of the IBM since only the latter may reflect the washout in a finite time horizon.

Now we consider a sufficiently large dilution rate, $D=0.5$ h$^{-1}$, corresponding to the washout conditions. Figure \ref{fig.lessivage2} (left) presents the evolution of the biomass concentration in the different models. The runs of the IBM converge to the washout in finite time whereas both deterministic ODE and IDE models converge exponentially to washout without ever reaching it in finite time. Figure \ref{fig.lessivage2} (right) shows the empirical distribution of the washout time calculated from 7000 independent runs of the IBM. This washout time features a relatively large variance.

%%%%%%%%%%%%%%%%%%%%%%%%%%%%%%%%%%%%%%%%%%%%%%%%%%%%%%%%%%%%%%%%%%%%%%
%%%%%%%%%%%%%%%%%%%%%%%%%%%%%%%%%%%%%%%%%%%%%%%%%%%%%%%%%%%%%%%%%%%%%%
\section*{Conclusion}
%%%%%%%%%%%%%%%%%%%%%%%%%%%%%%%%%%%%%%%%%%%%%%%%%%%%%%%%%%%%%%%%%%%%%%
%%%%%%%%%%%%%%%%%%%%%%%%%%%%%%%%%%%%%%%%%%%%%%%%%%%%%%%%%%%%%%%%%%%%%%

We proposed a hybrid chemostat model. On the one hand the bacterial population is modeled as an individual-based model: each bacterium is explicitly represented through its mass. On the other hand, the substrate concentration dynamics is represented as a conventional ordinary differential equation. Mechanisms acting on the bacteria are explicitly described: growth, division and up-take. Consumption of the substrate is also described and it is through this mechanism that bacteria interact.

We described an exact Monte Carlo simulation technique of the model. Our main result is   the convergence of the IBM to an integro-differential equation model when the population size tends to infinity. There is a convergence in distribution for coupled stochastic processes: the first one with càdlàg trajectories takes values ​​in the set of finite measures on the space of  masses, the second one with continuous trajectories takes values ​​in the set of substrate concentration values. The integro-differential equation model limit has been known for many years as population-balance equation and is used in particular for growth-fragmentation models. The numerical tests have allowed us to illustrate this convergence: in large population, the integro-differential equation model accurately reflects the behavior of the IBM and thus, the randomness can be neglected. In small or medium population this randomness is not negligible. We also proposed a numerical test where the classical chemostat model, in terms of two coupled ordinary differential equations, cannot account for transient behavior observed with the integro-differential model as with the IBM. Finally, in the case of washout, thus in small population size, IBM gives account for the random washout time, whereas the conventional model as the integro-differential model merely offers a more limited vision of this phenomenon as an asymptotic convergence to washout,  never reached in finite time.

\bigskip

It would be interesting to propose extensive simulations of this model, we can consider several axes. First, our simulations are based on a division rate $\lambda(s,x)$ which  does not depend on the substrate concentration~$s$. One can easily consider a rate depending on~$s$, there are indeed such examples \citep[see e.g.][]{henson2003b}. Similarly, in our model the individual growth dynamics is deterministic, it would be appropriate to consider a stochastic individual growth dynamics. To progress in this direction it would  be necessary to consider specific cases studies where more explicit growth dynamics would be specified.

It would also be pertinent to develop  approximation methods based on moments,
but  these methods  are usually ad hoc and it would be worthwhile to rely on a rigorous and systematic approach.

Finally, there are several relatively immediate extensions of the proposed model.
At first, one can imagine extending the model to the case of several species and several types of substrates.
One can also model the effects of aggregation of bacteria in the chemostat, for example in the form of flocculation with given rates for aggregation and fragmentation of flocs. We can also consider two classes of bacteria, attached bacteria and planktonic bacteria, to account for a biofilm dynamic.

%%%%%%%%%%%%%%%%%%%%%%%%%%%%%%%%%%%%%%%%%%%%%%%%%%%%%%%%%%%%%%%%%%%%%%
%%%%%%%%%%%%%%%%%%%%%%%%%%%%%%%%%%%%%%%%%%%%%%%%%%%%%%%%%%%%%%%%%%%%%%
\section*{Acknowledgements}
%%%%%%%%%%%%%%%%%%%%%%%%%%%%%%%%%%%%%%%%%%%%%%%%%%%%%%%%%%%%%%%%%%%%%%
%%%%%%%%%%%%%%%%%%%%%%%%%%%%%%%%%%%%%%%%%%%%%%%%%%%%%%%%%%%%%%%%%%%%%%

The authors are grateful to Jérôme Harmand and Claude Lobry for discussions on the model, to Pierre Pudlo and Pascal Neveu for their help concerning the programming of the IBM. This work is partially supported by the project ``Modèles Numériques pour les écosystèmes Microbiens'' of the French National Network of Complex Systems (RNSC call 2012). The work of Coralie Fritsch is partially supported by the Meta-omics of Microbial Ecosystems (MEM) metaprogram of INRA.

%%%%%%%%%%%%%%%%%%%%%%%%%%%%%%%%%%%%%%%%%%%%%%%%%%%%%%%%%%%%%%%%%%%%%%
%%%%%%%%%%%%%%%%%%%%%%%%%%%%%%%%%%%%%%%%%%%%%%%%%%%%%%%%%%%%%%%%%%%%%%
%%%%%%%%%%%%%%%%%%%%%%%%%%%%%%%%%%%%%%%%%%%%%%%%%%%%%%%%%%%%%%%%%%%%%%
\appendix
\section*{Appendices}
\addcontentsline{toc}{section}{Appendices}
%%%%%%%%%%%%%%%%%%%%%%%%%%%%%%%%%%%%%%%%%%%%%%%%%%%%%%%%%%%%%%%%%%%%%%
%%%%%%%%%%%%%%%%%%%%%%%%%%%%%%%%%%%%%%%%%%%%%%%%%%%%%%%%%%%%%%%%%%%%%%
%%%%%%%%%%%%%%%%%%%%%%%%%%%%%%%%%%%%%%%%%%%%%%%%%%%%%%%%%%%%%%%%%%%%%%

%%%%%%%%%%%%%%%%%%%%%%%%%%%%%%%%%%%%%%%%%%%%%%%%%%%%%%%%%%%%%%%%%%%%%%
%%%%%%%%%%%%%%%%%%%%%%%%%%%%%%%%%%%%%%%%%%%%%%%%%%%%%%%%%%%%%%%%%%%%%%
\section{Skorohod topology}
\label{appendix.skorohod}
%%%%%%%%%%%%%%%%%%%%%%%%%%%%%%%%%%%%%%%%%%%%%%%%%%%%%%%%%%%%%%%%%%%%%%
%%%%%%%%%%%%%%%%%%%%%%%%%%%%%%%%%%%%%%%%%%%%%%%%%%%%%%%%%%%%%%%%%%%%%%

The space of finite measures $\MM_{F}(\X)$ is equipped with the topology of the weak convergence, that is the smallest topology for which the applications  $\zeta\to\crochet{\zeta,f}=\int_{\XX}f(x)\,\zeta(\rmd x)$  are continuous for any  $f\in\C(\X)$. This topology is metrized by the Prokhorov metric:
\begin{align*}
  &\dpr(\zeta,\zeta')
  \eqdef 
  \inf\Bigl\{
     \epsilon >0\,;\,
     \zeta(F)\leq \zeta'(F^\epsilon)+\epsilon\,,\ 
\\[-0.3em]
  &\qquad\qquad\qquad\qquad\qquad\,\,\,
     \zeta'(F)\leq \zeta(F^\epsilon)+\epsilon\,,\ 
     \textrm{ for all closed }F\subset \XX
  \Bigr\}
\end{align*}
where $F^\epsilon\eqdef\{x\in\XX;\inf_{y\in F}|x-y|<\epsilon\}$
\citep[see][Appendix A2.5]{daley2003a}. The Prokhorov distance
is bounded by the distance of the total variation $\dtv(\zeta,\zeta')=\normtv{\zeta-\zeta'}$ associated with the norm defined by:
\begin{align*}
  \normtv{\zeta}
  \eqdef \sup_{A\in\BB(\XX)}|\zeta(A)+\zeta(A^c)| 
  = \zeta_{+}(\XX)+ \zeta_{-}(\XX)
  = \sup_{\substack{f\textrm{ continuous}\\\norm{f}_{\infty}\leq 1}}
  		|\crochet{\zeta,f}|
\end{align*}
for any finite and signed measure $\zeta$ where $\zeta=\zeta_{+}-\zeta_{-}$ is the Hahn-Jordan decomposition of  $\zeta$.

\medskip

The space $\D([0,T],\MM_{F}(\X))$ is equipped with the Skorohod metric $\ds$. Instead of giving the definition of this metric \citep[see][Eq. (5.2) p. 117]{ethier1986a}
we recall a characterization of the convergence for this metric.

\medskip

According to the first characterization \cite[Prop. 5.3, p. 119]{ethier1986a}, a sequence  $(\zeta^{n})_{n\in\N}$ converges to $\zeta$ in $\DD([0,T],\MM_{F}(\X))$, i.e. $\ds(\zeta^{n},\zeta)\to 0$, if and only if there exists a sequence  $\lambda_{n}(t)$ of time change functions such that, for all $n$, 
$t\to \lambda_{n}(t)$ is strictly increasing and continuous with $\lambda_{n}(0)=0$, $\lambda_{n}(t)\to_{t\to\infty}\infty$, satisfying:
\begin{align}
\label{appendix.skorohod.cara.2}
   \sup_{0\leq t\leq T} 
   \dpr({\zeta_t^{n},\zeta_{\lambda_{n}(t)}}) 
     \xrightarrow[n\to\infty]{} 0
\end{align}
and
\begin{align}
\label{appendix.skorohod.cara.3}
	\sup_{0\leq t\leq T}|\lambda_{n}(t)-t|\to 0\,.
\end{align}

\medskip

If $(\zeta^{n})_{n\in\N}$ converges to $\zeta$ in $\DD([0,T],\MM_{F}(\X))$ and if 
$\zeta\in\CC([0,T],\MM_{F}(\XX))$ then in:
\[
  \sup_{0\leq t\leq T}\dpr(\zeta^{n}_{t},\zeta_{t})
  \leq
  \sup_{0\leq t\leq T}\dpr(\zeta^{n}_{t},\zeta_{\lambda_n(t)})
  +
  \sup_{0\leq t\leq T}\dpr(\zeta_{\lambda_n(t)},\zeta_{t})
\]
the first term of the  right-hand side tends to 0 because of \eqref{appendix.skorohod.cara.2};
 the second one tends to 0 because of \eqref{appendix.skorohod.cara.3} and the uniform continuity of $\zeta$ in $[0,T]$. This proves that $\zeta^{n}$ converges to $\zeta$ in $\DD([0,T],\MM_{F}(\X))$ also for the uniform metrics.
%{\color{cyan}(c'est la preuve dans Jacod, en plus simple car on travaille sur $[0,T]$).}}

%\medskip
%
%When the limit $\zeta$ is continuous, a second characterization is available \cite[Prop. 6 5, Chap. 3 p. 125]{ethier1986a}.
%If $\zeta\in\CC([0,T],\MM_{F}(\XX))$, then $\ds(\zeta^{n},\zeta)\to 0$ if and only if for all  $t\in[0,T]$ and all sequence $t_{n}\to t$:
%\begin{align}
%\label{appendix.skorohod.cara.4}
%   \dpr({\zeta_{t_{n}}^{n},\zeta_{t}}) 
%     \xrightarrow[n\to\infty]{} 0\,.
%\end{align}

%%%%%%%%%%%%%%%%%%%%%%%%%%%%%%%%%%%%%%%%%%%%%%%%%%%%%%%%%%%%%%%%%%%%%%
%%%%%%%%%%%%%%%%%%%%%%%%%%%%%%%%%%%%%%%%%%%%%%%%%%%%%%%%%%%%%%%%%%%%%%
\section{Numerical integration scheme for the IDE}
\label{appendix.schema.num}
%%%%%%%%%%%%%%%%%%%%%%%%%%%%%%%%%%%%%%%%%%%%%%%%%%%%%%%%%%%%%%%%%%%%%%
%%%%%%%%%%%%%%%%%%%%%%%%%%%%%%%%%%%%%%%%%%%%%%%%%%%%%%%%%%%%%%%%%%%%%%

To  numerically solve the system of integer-differential equations \eqref{eq.limite.substrat.fort}-\eqref{eq.limite.eid.fort}, that is the strong version 
of the limit system \eqref{eq.limite.substrat.faible}-\eqref{eq.limite.eid.faible}, 
we make use of finite difference schemes.

Given a time step $\Delta t$ and a mass step  $\Delta x = L/I$, with $I \in \NN^*$,
we discretize the time and mass space with:
\begin{align*}
	t_n & = n \, \Delta t \,
	&
	x_i & = i \, \Delta x\,.
\end{align*}
We introduce the following approximations:
\begin{align*}
	p_{n,i} & \simeq p_{t_n}(x_i) \,,
	&
	s_n & \simeq S_{t_n}\,.
\end{align*}
We also suppose first that at the initial time step there is no individual with null mass in the vessel, i.e. $p_{0,0}=0$; and second that 
individual with null mass cannot be generated during the cell division step,
i.e.  $q$ is regular with $q(0)=0$. This assumption was not necessary in the mathematical development presented in the previous sections but is naturally required to obtain reasonable mass of individuals in the simulation. 

\medskip

For time integration we use an explicit Euler scheme, for space integration,
an uncentered upwind difference scheme, which leads to the coupled integration scheme:
\begin{align*}
	\frac{p_{n+1,i}-p_{n,i}}{\Delta t}
	&  =  
		- \rhog(s_n,x_i)\, \frac{p_{n,i}-p_{n,i-1}}{\Delta x}	
		- \frac{\partial}{\partial x}\rhog(s_n,x_i)\,p_{n,i} 
\\	
	&\qquad\qquad
		- \bigl(\lambda(s_n, x_i)+D \bigr)\,p_{n, i} 
	 	+ 2\, \Delta x \, \sum_{j=1}^{I}
	 		 				\frac{\lambda(s_n, x_j)}{x_j}\,
	 		 				q\left(\frac{x_i}{x_j} \right)\,
	 		 				p_{n,j} \,,
\\
	\frac{s_{n+1}-s_n}{\Delta t}
	& = 
	D\,(\Sin - s_n)
	- \frac{k}{V} \, \Delta x \, \sum_{j=1}^{I} \rhog(s_n, x_j) \, p_{n,j}
\end{align*}
for $n \in \NN$ and $i = 1, \cdots I$, with the boundary condition:
\begin{align*}
	p_{n+1,0} = 0
\end{align*}
and given initial conditions $p_{0,i}$ and $s_{0}$.

We finally get:
\begin{align*}
	p_{n+1,i}
	& =   
 	p_{n,i} + \Delta t \;
	\Biggl\{
		- \rhog(s_n,x_i)\, \frac{p_{n,i}-p_{n,i-1}}{\Delta x}	
		- \frac{\partial}{\partial x}\rhog(s_n,x_i)\,p_{n,i} 
\\	
	& \qquad\qquad
		- \bigl(\lambda(s_n, x_i)+D \bigr) \, p_{n, i} 
	 	+ 2\, \Delta x \, 
			\sum_{j=1}^{I} \frac{\lambda(s_n, x_j)}{x_j}\,
	 		 			q\left(\frac{x_i}{x_j} \right)\, p_{n,j} 
		\Biggr\}
\\
	s_{n+1}
	& = 
	s_n + \Delta t \,  
	\Biggl\{
		D\,(\Sin - s_n) 
		- \frac{k}{V} \, \Delta x \, \sum_{j=1}^{I} \rhog(s_n, x_j) \, p_{n,j}
	\Biggl\}
\end{align*}
for  $n \in \NN$ and $i = 1, \cdots I$ with boundary condition  $p_{n+1,0} = 0$ and given initial conditions $p_{0,i}$ and $s_{0}$.

%%%%%%%%%%%%%%%%%%%%%%%%%%%%%%%%%%%%%%%%%%%%%%%%%%%%%%%%%%%%%%%%%%%%%%
%%%%%%%%%%%%%%%%%%%%%%%%%%%%%%%%%%%%%%%%%%%%%%%%%%%%%%%%%%%%%%%%%%%%%%
%\bibliographystyle{apalike}
%\bibliography{lib/fab,lib/coralie}

\begin{thebibliography}{}

\bibitem[Allen, 2003]{allen2003a}
Allen, L.~J. (2003).
\newblock {\em An Introduction to Stochastic Processes with Biology
  Applications}.
\newblock Prentice Hall.

\bibitem[Billingsley, 1968]{billingsley1968a}
Billingsley, P. (1968).
\newblock {\em Convergence of Probability Measures}.
\newblock John Wiley \& Sons, New York.


\bibitem[Campillo et~al., 2011]{campillo2011chemostat}
Campillo, F., Joannides, M., and Larramendy-Valverde, I. (2011).
\newblock Stochastic modeling of the chemostat.
\newblock {\em Ecological Modelling}, 222(15):2676--2689.

\bibitem[Champagnat, 2006]{champagnat2006b}
Champagnat, N. (2006).
\newblock A microscopic interpretation for adaptive dynamics trait substitution
  sequence models.
\newblock {\em Stochastic Processes and their Applications}, 116(8):1127--1160.

\bibitem[Champagnat et~al., 2013]{champagnat2013a}
Champagnat, N., Jabin, P.-E., and M{\'e}l{\'e}ard, S. (2013).
\newblock Adaptation in a stochastic multi-resources chemostat model.
\newblock {\em ArXiv Mathematics e-prints}, arXiv:1302.0552v1 [math.PR].

\bibitem[Crump and O'Young, 1979]{crump1979a}
Crump, K.~S. and O'Young, W.-S.~C. (1979).
\newblock Some stochastic features of bacterial constant growth apparatus.
\newblock {\em Bulletin of Mathematical Biology}, 41(1):53 -- 66.

\bibitem[Daley and Vere-Jones, 2003]{daley2003a}
D.~Daley and D.~Vere-Jones.
\newblock {\em An Introduction to the Theory of Point Processes -- Volume I:
  Elementary Theory and Methods}.
\newblock Springer, second edition, 2003.

\bibitem[Daoutidis and Henson, 2002]{daoutidis2002a}
Daoutidis, P. and Henson, M. (2002).
\newblock Dynamics and control of cell populations in continuous bioreactors.
\newblock {\em {AIChE} Symposium Series}, 326:274--289.

\bibitem[Diekmann et~al., 2005]{diekmann-odo2005a}
Diekmann, O., Jabin, P.-E., Mischler, S., and Perthame, B. (2005).
\newblock The dynamics of adaptation: an illuminating example and a
  {Hamilton-Jacobi} approach.
\newblock {\em Theoretical Population Biology}, 67(4):257--71.

\bibitem[Ethier and Kurtz, 1986]{ethier1986a}
Ethier, S.~N. and Kurtz, T.~G. (1986).
\newblock {\em Markov Processes -- Characterization and Convergence}.
\newblock John Wiley \& Sons.

\bibitem[Fournier and M{\'e}l{\'e}ard, 2004]{fournier2004a}
Fournier, N. and M{\'e}l{\'e}ard, S. (2004).
\newblock A microscopic probabilistic description of a locally regulated
  population and macroscopic approximations.
\newblock {\em Annals of Applied Probability}, 14(4):1880--1919.

\bibitem[Fredrickson et~al., 1967]{fredrickson1967a}
Fredrickson, A., Ramkrishna, D., and Tsuchiya, H. (1967).
\newblock Statistics and dynamics of procaryotic cell populations.
\newblock {\em Mathematical Biosciences}, 1(3):327--374.


%{\color{red}
%\bibitem[Gibbs and Su, 2002]{gibbs2002a}
%A.~Gibbs and F.~E. Su.
%\newblock On choosing and bounding probability metrics.
%\newblock {\em International Statistical Review}, 70(3):419--435, 2002.
%}


\bibitem[Grasman et~al., 2005]{grasman2005a}
Grasman, J., {De Gee}, M., and Herwaarden, O. A.~V. (2005).
\newblock Breakdown of a chemostat exposed to stochastic noise volume.
\newblock {\em Journal of Engineering Mathematics}, 53(3):291--300.

\bibitem[Hatzis et~al., 1995]{hatzis1995a}
Hatzis, C., Srienc, F., and Fredrickson, A.~G. (1995).
\newblock Multistaged corpuscular models of microbial growth: {Monte Carlo}
  simulations.
\newblock {\em Biosystems}, 36(1):19--35.

\bibitem[Henson, 2003]{henson2003b}
Henson, M.~A. (2003).
\newblock Dynamic modeling and control of yeast cell populations in continuous
  biochemical reactors.
\newblock {\em Computers \&\ Chemical Engineering}, 27(8-9):1185--1199.

\bibitem[Hoskisson and Hobbs, 2005]{hoskisson2005a}
Hoskisson, P.~A. and Hobbs, G. (2005).
\newblock {Continuous culture - making a comeback?}
\newblock {\em Microbiology}, 151(10):3153--3159.

\bibitem[Ikeda and Watanabe, 1981]{ikeda1981a}
Ikeda, N. and Watanabe, S. (1981).
\newblock {\em Stochastic Differential Equations and Diffusion Processes}.
\newblock North--Holland/Kodansha.

\bibitem[Imhof and Walcher, 2005]{imhof2005a}
Imhof, L. and Walcher, S. (2005).
\newblock Exclusion and persistence in deterministic and stochastic chemostat
  models.
\newblock {\em Journal of Differential Equations}, 217(1):26--53.

%{\color{red}
%\bibitem[Jacod and Shiryayev, 1987]{jacod2003c}
%J.~Jacod and A.~N. Shiryayev.
%\newblock {\em Limit Theorems for Stochastic Processes}.
%\newblock Springer--Verlag, second edition, 2003.
%}

\bibitem[Joffe and M\'etivier, 1986]{joffe1986a}
Joffe, A. and M\'etivier, M. (1986).
\newblock Weak convergence of sequences of semimartingales with applications to
  multitype branching processes.
\newblock {\em Advances in Applied Probability}, 18:20--65.

\bibitem[Lee et~al., 2009]{lee-min-woo2009a}
Lee, M.~W., Vassiliadis, V.~S., and Park, J.~M. (2009).
\newblock Individual-based and stochastic modeling of cell population dynamics
  considering substrate dependency.
\newblock {\em Biotechnology and Bioengineering}, 103(5):891--899.

\bibitem[M{\'e}l{\'e}ard and Roelly, 1993]{meleard1993a}
M{\'e}l{\'e}ard, S. and Roelly, S. (1993).
\newblock Sur les convergences \'etroite ou vague de processus \`a valeurs
  mesures.
\newblock {\em Comptes rendus de l'Acad{\'e}mie des Sciences de Paris S{\'e}r.
  1}, 317:785--788.

\bibitem[Mirrahimi et~al., 2012a]{mirrahimi2012a}
Mirrahimi, S., Perthame, B., and Wakano, J. (2012a).
\newblock Evolution of species trait through resource competition.
\newblock {\em Journal of Mathematical Biology}, 64(7):1189--1223.

\bibitem[Mirrahimi et~al., 2012b]{mirrahimi2012b}
Mirrahimi, S., Perthame, B., and Wakano, J.~Y. (2012b).
\newblock Direct competition results from strong competiton for limited
  resource.
\newblock {\em arXiv:1201.5826v1}.

\bibitem[Monod, 1950]{monod1950a}
Monod, J. (1950).
\newblock La technique de culture continue, th{\'e}orie et applications.
\newblock {\em Annales de l'Institut Pasteur}, 79(4):390--410.

\bibitem[Novick and Szilard, 1950]{novick1950a}
Novick, A. and Szilard, L. (1950).
\newblock Description of the chemostat.
\newblock {\em Science}, 112(2920):715--716.

\bibitem[Ramkrishna, 1979]{ramkrishna1979a}
Ramkrishna, D. (1979).
\newblock Statistical models of cell populations.
\newblock In {\em Advances in Biochemical Engineering}, volume~11, pages 1--47.
  Springer Berlin Heidelberg.

\bibitem[Ramkrishna, 2000]{ramkrishna2000a}
Ramkrishna, D. (2000).
\newblock {\em Population Balances: Theory and Applications to Particulate
  Systems in Engineering}.
\newblock Elsevier Science.

\bibitem[Roelly-Coppoletta, 1986]{roelly1986a}
Roelly-Coppoletta, S. (1986).
\newblock A criterion of convergence of measure-valued processes: application
  to measure branching processes.
\newblock {\em Stochastics}, 17(1-2):43--65.

\bibitem[R{\"u}diger and Ziglio, 2006]{rudiger2006a}
R{\"u}diger, B. and Ziglio, G. (2006).
\newblock It{\^o} formula for stochastic integrals w.r.t. compensated {Poisson}
  random measures on separable {Banach} spaces.
\newblock {\em Stochastics An International Journal of Probability and
  Stochastic Processes}, 78(6):377--410.

\bibitem[Smith and Waltman, 1995]{smith1995a}
Smith, H.~L. and Waltman, P.~E. (1995).
\newblock {\em The Theory of the Chemostat: Dynamics of Microbial Competition}.
\newblock {Cambridge University Press}.

\bibitem[Stephanopoulos et~al., 1979]{stephanopoulos1979a}
Stephanopoulos, G., Aris, R., and Fredrickson, A. (1979).
\newblock A stochastic analysis of the growth of competing microbial
  populations in a continuous biochemical reactor.
\newblock {\em Mathematical Biosciences}, 45:99--135.

\bibitem[Tran, 2006]{vietchitran2006a}
Tran, V.~C. (2006).
\newblock {\em Mod{\`e}les particulaires stochastiques pour des probl{\`e}mes
  d'{\'e}volution adaptative et pour l'approximation de solutions
  statistiques}.
\newblock PhD thesis, Universit{\'e} de Nanterre - Paris X.

\bibitem[Tran, 2008]{vietchitran2008a}
Tran, V.~C. (2008).
\newblock Large population limit and time behaviour of a stochastic particle
  model describing an age-structured population.
\newblock {\em ESAIM: PS}, 12:345--386.

\end{thebibliography}
%%%%%%%%%%%%%%%%%%%%%%%%%%%%%%%%%%%%%%%%%%%%%%%%%%%%%%%%%%%%%%%%%%%%%%
%%%%%%%%%%%%%%%%%%%%%%%%%%%%%%%%%%%%%%%%%%%%%%%%%%%%%%%%%%%%%%%%%%%%%%

%%%%%%%%%%%%%%%%%%%%%%%%%%%%%%%%%%%%%%%%%%%%%%%%%%%%%%%%%%%%%%%%%%%%%%%%%%%%
%%%%%%%%%%%%%%%%%%%%%%%%%%%%%%%%%%%%%%%%%%%%%%%%%%%%%%%%%%%%%%%%%%%%%%%%%%%%
%%%%%%%%%%%%%%%%%%%%%%%%%%%%%%%%%%%%%%%%%%%%%%%%%%%%%%%%%%%%%%%%%%%%%%%%%%%%
\end{document}